\documentclass[]{article}

\usepackage{graphicx,xcolor,textpos}
\usepackage{helvet}
\usepackage[margin = 1in]{geometry}
\usepackage[utf8]{inputenc}
\usepackage[english]{babel}
\usepackage{gensymb}
\usepackage{float}
\usepackage{caption}
\usepackage{subcaption}
\usepackage{amsmath, mathtools, mathrsfs}
\usepackage{amssymb}
\usepackage{amsthm}
\usepackage{enumitem}
\usepackage[makeroom]{cancel}
\usepackage[hidelinks]{hyperref}
\usepackage{tikz}
\usepackage{tikz-cd}
\usepackage{blkarray}
\usepackage[framemethod = default]{mdframed}
\usepackage{multirow}
\usetikzlibrary{calc}
\usepackage{todonotes}
\usepackage{cleveref}
\usetikzlibrary{calc}

\newtheorem{theorem}{Theorem}[section]
\theoremstyle{definition} 
\newtheorem{definition}[theorem]{Definition}
\theoremstyle{plain}
\newtheorem{prop}[theorem]{Proposition}
\theoremstyle{definition} 

\theoremstyle{plain}
\newtheorem{lemma}[theorem]{Lemma}
\newtheorem{cor}[theorem]{Corollary}

\theoremstyle{definition}
\newtheorem{remark}[theorem]{Remark}
\newtheorem{question}[theorem]{Question}

\DeclareMathOperator{\GL}{GL}

\DeclareMathOperator{\Aut}{Aut}

\DeclareMathOperator{\Id}{Id}
\DeclareMathOperator{\diag}{diag}

\DeclareMathOperator{\rank}{rank}

\DeclareMathOperator{\BW}{BW}
\DeclareMathOperator{\supp}{supp}

\DeclareMathOperator{\lcm}{lcm}
\DeclareMathOperator{\Spec}{Spec}
\DeclareMathOperator{\Sp}{Sp}

\newcommand{\Z}{\mathbb{Z}}

\newcommand{\C}{\mathbb{C}}
\newcommand{\Q}{\mathbb{Q}}

\newcommand{\N}{\mathbb{N}}

\newcommand{\E}[1]{\{1,\ldots,#1\}}

\newcommand{\cyclearrow}[2]{
	\coordinate (a) at #1;
	\coordinate (b) at #2;
	\coordinate (v) at ($ (b) - (a) $);
	\coordinate (c) at ($ ($ 0.8*(a) + 0.2*(b) $) + 0.15*([cm={0,1,-1,0,(0,0)}] v) $);
	\coordinate (d) at ($ ($ 0.2*(a) + 0.8*(b) $) + 0.15*([cm={0,1,-1,0,(0,0)}] v) $);
	\draw[-stealth] (c) to[bend left] (d);
}

\newcommand{\picOne}{\begin{tikzpicture}
		\coordinate (va) at (-1,0);
		\coordinate (vb) at (-0.5,0.87);
		\coordinate (vc) at (0.5,0.87);
		\coordinate (vd) at (1,0);
		\coordinate (ve) at (0.5, -0.87);
		\coordinate (vf) at (-0.5, -0.87);
		
		\draw[line width=0.4mm] (va) -- (vd);
		\filldraw [black] (va) circle (2.5pt) node[anchor = east] {$\lambda\,$};
		\filldraw [black] (vb) circle (2.5pt);
		\filldraw [black] (vc) circle (2.5pt);
		\filldraw [black] (vd) circle (2.5pt) node[anchor = west] {$\,\mu$};
		\filldraw [black] (ve) circle (2.5pt);
		\filldraw [black] (vf) circle (2.5pt);
		
		\node at ($ 0.5*(vb) + 0.5*(vc) $) {$\cdots$};
		\node at ($ 0.5*(ve) + 0.5*(vf) $) {$\cdots$};
		\cyclearrow{(va)}{(vb)}
		\cyclearrow{(vc)}{(vd)}
		\cyclearrow{(vd)}{(ve)}
		\cyclearrow{(vf)}{(va)}
		
		\node at (2.5, 0) {or};
		
		\begin{scope}[xshift = 5cm]
			\coordinate (v1) at (0, 1);
			\coordinate (v2) at (0.95, 0.31);
			\coordinate (v3) at (0.59, -0.81);
			\coordinate (v4) at (-0.59, -0.81);
			\coordinate (v5) at (-0.95, 0.31);
			
			\draw[cm={1.5 ,0 ,0 ,1.5 ,(5,0.5)}, line width=0.4mm] (v1)  to[in=50,out=130, loop] (v1);
			
			\filldraw [black] (v1) circle (2.5pt) node[anchor = north, yshift=-3] {$\lambda\,$};
			\filldraw [black] (v2) circle (2.5pt);
			\filldraw [black] (v3) circle (2.5pt);
			\filldraw [black] (v4) circle (2.5pt);
			\filldraw [black] (v5) circle (2.5pt);

			\cyclearrow{(v1)}{(v2)}
			\cyclearrow{(v2)}{(v3)}
			\cyclearrow{(v4)}{(v5)}
			\cyclearrow{(v5)}{(v1)}
			
			\node at ($ 0.5*(v3) + 0.5*(v4) $) {$\cdots$};
		\end{scope}
		
\end{tikzpicture}}

\newcommand{\picTwo}{\begin{tikzpicture}
		\coordinate (v1) at (-0.5, 0);
		\coordinate (v2) at (-1.19, -0.95);
		\coordinate (v3) at (-2.31, -0.59);
		\coordinate (v4) at (-2.31, 0.59);
		\coordinate (v5) at (-1.19, 0.95);
		
		\coordinate (v6) at (0.5, 0);
		\coordinate (v7) at (1.19, 0.95);
		\coordinate (v8) at (2.31, 0.59);
		\coordinate (v9) at (2.31, -0.59);
		\coordinate (v10) at (1.19, -0.95);
		
		\draw[line width=0.4mm] (v1) -- (v6);
		\filldraw [black] (v1) circle (2.5pt) node[anchor = east] {$\lambda\,$};
		\filldraw [black] (v2) circle (2.5pt);
		\filldraw [black] (v3) circle (2.5pt);
		\filldraw [black] (v4) circle (2.5pt);
		\filldraw [black] (v5) circle (2.5pt);

		\cyclearrow{(v1)}{(v2)}
		\cyclearrow{(v2)}{(v3)}
		\cyclearrow{(v4)}{(v5)}
		\cyclearrow{(v5)}{(v1)}
		
		\filldraw [black] (v6) circle (2.5pt) node[anchor = west] {$\,\mu$};
		\filldraw [black] (v7) circle (2.5pt);
		\filldraw [black] (v8) circle (2.5pt);
		\filldraw [black] (v9) circle (2.5pt);
		\filldraw [black] (v10) circle (2.5pt);
		
		\cyclearrow{(v6)}{(v7)}
		\cyclearrow{(v7)}{(v8)}
		\cyclearrow{(v9)}{(v10)}
		\cyclearrow{(v10)}{(v6)}
		
		\node at ($ 0.4*(v3) + 0.6*(v4) $) {$\vdots$};
		\node at ($ 0.6*(v8) + 0.4*(v9) $) {$\vdots$};
\end{tikzpicture}}

\title{The $R_\infty$-property for right-angled Artin groups and their nilpotent quotients}
\author{Thomas Witdouck \thanks{KU Leuven Campus Kortrijk Kulak, Department of Mathematics, Etienne Sabbelaan 53, 8500 Kortrijk, Belgium. Email: thomas.witdouck@kuleuven.be. The author was supported by a PhD fellowship of the Research Fund - Flanders (FWO), Grant Number 1153122N.}}

\begin{document}

\maketitle

\begin{abstract}
	It is proven that every non-abelian right-angled Artin group has the $R_\infty$-property and bounds are given on the $R_\infty$-nilpotency index. In case the graph is transposition-free, which is true for almost all graphs, it is shown that the $R_\infty$-nilpotency index is equal to 2.
\end{abstract}

\section{Introduction and results}

Let $G$ be a group. For any automorphism $\varphi$ of $G$, one says $x,y \in G$ are $\varphi$-conjugate if and only if there exists an element $z \in G$ such that $x = z y \varphi(z)^{-1}$. Being $\varphi$-conjugate defines an equivalence relation on $G$ and the number of equivalence classes is called the Reidemeister number of $\varphi$. This number is denoted by $R(\varphi)$ and takes values in $\N_0 \cup \{\infty\}$. \footnote{In this paper we use the convention that $\N = \{0, 1, 2, \ldots\}$ and $\N_0 = \N \setminus \{0\}$.} The \textit{Reidemeister spectrum} of a group $G$, written $\Spec_R(G)$, is defined as the set of all Reidemeister numbers $\Spec_R(G) = \{ R(\varphi) \mid \varphi \in \Aut(G) \}$.
\begin{definition}
	A group $G$ is said to have the \textit{$R_\infty$-property} if $\Spec_R(G) = \{\infty\}$.
\end{definition}
The Reidemeister number finds its origin in Nielsen fixed point theory. Under certain assumptions, the Reidemeister spectrum of the fundamental group of a topological space $X$ can give  information about the number of fixed points of homeomorphisms on $X$. For example, if the topological space is a nilmanifold, i.e. a quotient $N/L$ of a simply connected nilpotent Lie group $N$ by a cocompact lattice $L$, and its fundamental group (which is isomorphic to $L$) has the $R_\infty$-property, then every homeomorphism of $N/L$ is homotopic to a map which has no fixed points. The fundamental groups of nilmanifolds are exactly the finitely generated torsion-free nilpotent groups. Nilpotent quotients of right-angled Artin groups fall under this class and are the groups that will be considered in this paper. A natural question for these groups is: how does the $R_\infty$-property behave with respect to the nilpotency class? In this light, the following definition was introduced. We let $\gamma_{i}(G)$ denote the $i$-th term in the lower central series of $G$, i.e. $\gamma_1(G)= G$ and $\gamma_{i+1}(G) = [G, \gamma_i(G)]$. The same notation will be used for Lie algebras.

\begin{definition}
	Let $G$ be a group. The \textit{$R_\infty$-nilpotency index} is the least integer $c$ such that $G/\gamma_{c+1}(G)$ has the $R_\infty$-property. If no such integer exists, then we say the $R_\infty$-nilpotency index is infinite. 
\end{definition}

The $R_\infty$-nilpotency index was first introduced in \cite{dg16-1} and determined for surface groups. The index has also been studied on other groups, e.g. free groups \cite{dg14-1} and Baumslag-Solitar groups \cite{dg20-1}.

\begin{remark}
	\label{rem:RinftyNilpIndex}
	If a group $G$ has finite $R_\infty$-nilpotency index $c$, then $G$ itself and the groups $G/\gamma_{i+1}(G)$ for any integer $i \geq c$ also have the $R_\infty$-property. This follows from Lemma 1.1 in \cite{gw09-1}.
\end{remark}

In this paper, a graph $\Gamma$ is defined as a tuple $(V, E)$ with $V$ a finite set and $E$ a subset of $\{ \{v, w\} \,|\, v,w \in V \text{ and } v \neq w \}$. The elements in $V$ are called the vertices and the elements in $E$ the edges. To this information, one can associate a group $A(\Gamma)$ with presentation:
\[ A(\Gamma) = \Big\langle V \; \Big| \; [v, w]; \: v,w \in V, \,\{v, w\} \notin E \Big\rangle. \]
In the literature, these groups are referred to as: right-angled Artin groups (RAAG for short), (free) partially commutative groups, graph groups, etc. and they are also often defined using the opposite convention, namely that vertices commute if and only if they are connected with an edge in the graph. The purpose of this paper is to study the $R_\infty$-property and the $R_\infty$-nilpotency index of the groups $A(\Gamma)$ for an arbitrary graph $\Gamma$. To this extend, we also define for any integer $c > 1$ the $c$-step nilpotent quotient of $A(\Gamma)$ as
\[ A(\Gamma, c) = \frac{A(\Gamma)}{\gamma_{c+1}(A(\Gamma))}. \]
In \cite{send21-1} it was proven that $A(\Gamma)$ has the $R_\infty$-property for certain subclasses of graphs, among which the class of non-empty transvection-free graphs, where a graph is called \textit{non-empty} if its set of edges is non-empty and \textit{transvection-free} if $v \mapsto vw$ does not define an automorphism of $A(\Gamma)$ for any vertices $v,w \in V$. Moreover, from the proof it follows that the $R_\infty$-nilpotency index is equal to either $2$ or $3$. In section \ref{sec:transposition-freeGraphs} we improve this result by showing Theorem \ref{thm:transpositionFreeIndex2} as stated below. A graph $\Gamma$ is called \textit{transposition-free} if the transposition of no two distinct vertices yields a graph automorphism. Being transposition-free is a weaker condition than being transvection-free.
	
\begin{theorem}
	\label{thm:transpositionFreeIndex2}
	If $\Gamma$ is a non-empty transposition-free graph, then $A(\Gamma)$ has $R_\infty$-nilpotency index 2.
\end{theorem}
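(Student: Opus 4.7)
The plan is to reduce the $R_\infty$-property of $A(\Gamma,2)$ to a linear-algebraic statement via the standard spectral criterion for Reidemeister numbers of finitely generated torsion-free nilpotent groups, and then to extract the needed eigenvalue assertion from the hypothesis of transposition-freeness.

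Set $N := A(\Gamma, 2)$, which is finitely generated, torsion-free, and $2$-step nilpotent. Its two nontrivial lower central quotients are
\[
N/\gamma_2(N) \cong \Z^V \qquad\text{and}\qquad \gamma_2(N)/\gamma_3(N) \cong \wedge^2 \Z^V / E_\Gamma,
\]
where $E_\Gamma \subseteq \wedge^2\Z^V$ is the subgroup spanned by $v\wedge w$ over edges $\{v,w\}\in E$. By the standard spectral criterion (following inductively from $R(\psi)=|\det(I-\psi)|$ for automorphisms of free abelian groups, cf.\ \cite{gw09-1}), an automorphism $\varphi\in\Aut(N)$ satisfies $R(\varphi)<\infty$ if and only if neither the induced map $A := \varphi_\ast\in\GL(\Q^V)$ nor the induced quotient $\bar A$ on $\wedge^2\Q^V/E_\Gamma$ admits $1$ as an eigenvalue. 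Compatibility of $\varphi$ with the bracket forces $\wedge^2 A(E_\Gamma\otimes\Q) = E_\Gamma\otimes\Q$, and this is in fact the only constraint on $A$ coming from $\varphi$. Hence the theorem reduces to the following purely linear-algebraic claim: \emph{for every $A\in\GL(\Q^V)$ with $\wedge^2 A(E_\Gamma\otimes\Q) = E_\Gamma\otimes\Q$, at least one of $A$ or $\bar A$ has $1$ as an eigenvalue}.

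I would argue this by contradiction. Suppose $A$ is a counterexample, and fix eigenvalues $\lambda_1,\ldots,\lambda_n\ne 1$ of $A$ over $\Qbar$ with corresponding (generalized) eigenvectors $u_1,\ldots,u_n$, where $n = |V|$. Then $\wedge^2 A$ acts on $\wedge^2\Qbar^V$ with eigenvalues $\lambda_i\lambda_j$ and associated eigenvectors of the form $u_i\wedge u_j$. The assumption that $\bar A$ has no eigenvalue $1$ means that whenever $\lambda_i\lambda_j=1$ the wedge $u_i\wedge u_j$ lies in $E_\Gamma\otimes\Qbar$; combined with $\wedge^2 A(E_\Gamma)=E_\Gamma$, this forces the eigenvalues of $A$ to pair up reciprocally and locates the resulting eigenvector-wedges precisely inside $E_\Gamma$.

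The main obstacle is then to translate this linear-algebraic rigidity back to the graph level to extract two twin vertices. My plan is to use the graph-structural information encoded in $E_\Gamma$: pure $2$-vectors in $E_\Gamma$ correspond to $2$-planes in $\Q^V$ whose ``wedge pattern'' is supported on edges of $\Gamma$, so the paired eigenvectors $u_i, u_{j(i)}$ span such special $2$-planes. Careful tracking of how $A$ permutes these planes, using that $A$ has no fixed vector at all, should produce two distinct vertices $v, w\in V$ whose open neighborhoods $N(v)\setminus\{w\}$ and $N(w)\setminus\{v\}$ coincide; the transposition $(v\,w)$ is then a graph automorphism of $\Gamma$, contradicting the hypothesis. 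The delicate parts are handling eigenvalues equal to $-1$ or coming in Galois-conjugate reciprocal pairs (so that descent from $\Qbar$ back to $\Q$ is required), and possibly non-trivial Jordan blocks, but in each case the conclusion should be forced by the same edge-preservation argument.
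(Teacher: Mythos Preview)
Your reduction has a fatal gap: the ``purely linear-algebraic claim'' is false as stated. Take $A = 2\cdot I \in \GL(\Q^V)$; it preserves every subspace of $\wedge^2\Q^V$, yet its eigenvalues on $\Q^V$ are all $2$ and on the quotient all $4$. What you are silently using is that $A$ arises from a \emph{group} automorphism of $N$, so $A \in \GL(|V|,\Z)$ and in particular is integer-like. This is not a cosmetic restriction: it is precisely what forces the eigenvalues along each cycle to multiply to $\pm 1$ and makes the subsequent combinatorics possible. (Also, with the paper's convention $\gamma_2(N)\cong \wedge^2\Z^V/\langle v\wedge w:\{v,w\}\notin E\rangle$; you quotiented by the wrong set.)

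Even after restoring integrality, the step ``translate this rigidity back to the graph to extract two twin vertices'' is not a proof. The eigenvectors $u_i$ of a general integer-like $A$ have no a priori relation to the vertex basis, so a $2$-plane $\langle u_i,u_{j(i)}\rangle$ whose wedge lies in the edge span does not single out any pair of vertices. The paper's argument avoids this by first invoking the structure of the algebraic group $G^\Q(\Gamma)$ (Theorem~\ref{thm:gradedAutoGroup} together with Remark~\ref{rem:semisimplePart} and Lemma~\ref{lem:reducedFormOverQ}): in the transposition-free case the Levi factor is (finite group)$\times$(diagonal torus), so up to conjugacy the semisimple part of $A$ is a signed permutation $P_\psi D$ with $\psi\in\Aut(\Gamma)$ and $D$ diagonal with entries $\pm 1$. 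Only after this reduction does the heart of the proof begin: a combinatorial lemma (Proposition~\ref{prop:edgeExistenceBetweenCycles}) showing that for any $\psi\in\Aut(\Gamma)$, either some cycle of $\psi$ contains an internal edge, or two cycles of lengths $k_i,k_j$ joined by an edge satisfy a parity condition on $\lcm(k_i,k_j)(1/k_i+1/k_j)$; in each case an explicit degree-$2$ eigenvector of eigenvalue $1$ is written down. Your proposal skips both the algebraic-group reduction and this combinatorial argument, which is where the transposition-free hypothesis is actually used.
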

	
Let $\mathcal{P}$ be a property of graphs, invariant under isomorphism. Let $G_n$ denote the total number of isomorphism classes of graphs on $n$ vertices and $G_n(\mathcal{P})$ the total number of isomorphism classes of graphs on $n$ vertices which have property $\mathcal{P}$. The property $\mathcal{P}$ is said to hold for almost all unlabelled graphs if
\[ \lim_{n \to \infty} \frac{G_n(\mathcal{P})}{G_n} = 1. \]
The term `unlabelled' refers to the fact that we count the graphs up to isomorphism. Since two right-angled Artin groups are isomorphic if and only if the corresponding graphs are isomorphic, it makes sense to count the graphs up to isomorphism in this context. As a consequence of a classical result of Erd\"os and R\'enyi on labelled graphs \cite{er63-1}, it is argued in section 1.6 of \cite{baba95-1} that almost all unlabelled graphs are asymmetric, i.e. have a trivial automorphism group. Since an asymmetric graph is also transposition-free, we immediately get the following corollary.
\begin{cor}
	The group $A(\Gamma)$ has $R_\infty$-nilpotency index 2 for almost all unlabelled graphs $\Gamma$.
\end{cor}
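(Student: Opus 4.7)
The plan is to deduce the corollary by packaging Theorem \ref{thm:transpositionFreeIndex2} with the density statement about unlabelled graphs already cited in the paragraph preceding it. I would take as the property $\mathcal{P}$ the conjunction ``$\Gamma$ is non-empty and transposition-free'', since these are exactly the hypotheses of Theorem \ref{thm:transpositionFreeIndex2}. Both constituents are invariant under graph isomorphism, so $\mathcal{P}$ is a legitimate graph property in the sense of the definition of ``almost all unlabelled graphs''.

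First I would verify the elementary implication: if $\Gamma$ is asymmetric (i.e.\ $\Aut(\Gamma)$ is trivial), then $\Gamma$ is transposition-free, since any transposition of two distinct vertices that gave a graph automorphism would be a non-identity element of $\Aut(\Gamma)$. Second, I would observe that the proportion of non-empty graphs tends to $1$: among the $G_n$ isomorphism classes of graphs on $n$ vertices, only the edgeless graph is empty, so at most one isomorphism class per $n$ fails the non-emptiness condition, and $G_n \to \infty$. Hence non-emptiness alone holds for almost all unlabelled graphs.

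Now I would combine these with the statement (cited from section 1.6 of \cite{baba95-1}, itself a consequence of the Erd\H{o}s--R\'enyi result \cite{er63-1}) that asymmetry holds for almost all unlabelled graphs. Writing $\mathcal{A}$ for asymmetry, $\mathcal{T}$ for transposition-freeness and $\mathcal{N}$ for non-emptiness, the inclusion-exclusion bound
\[ \frac{G_n(\mathcal{N} \cap \mathcal{T})}{G_n} \;\geq\; \frac{G_n(\mathcal{N} \cap \mathcal{A})}{G_n} \;\geq\; \frac{G_n(\mathcal{A})}{G_n} + \frac{G_n(\mathcal{N})}{G_n} - 1 \]
forces $G_n(\mathcal{N} \cap \mathcal{T})/G_n \to 1$, where the first inequality uses $\mathcal{A} \Rightarrow \mathcal{T}$ and the second is the usual union bound on the complements.

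Finally, for every graph $\Gamma$ in this limiting full-measure class, Theorem \ref{thm:transpositionFreeIndex2} applies directly and gives that $A(\Gamma)$ has $R_\infty$-nilpotency index $2$, proving the corollary. There is no real obstacle here; the only thing to be careful about is to invoke Theorem \ref{thm:transpositionFreeIndex2} exactly with both of its hypotheses (non-emptiness and transposition-freeness), which is why I bundled $\mathcal{N}$ into $\mathcal{P}$ from the start rather than relying on transposition-freeness alone.
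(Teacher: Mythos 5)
Your proposal is correct and follows essentially the same route as the paper: cite the Erd\H{o}s--R\'enyi/Babai fact that almost all unlabelled graphs are asymmetric, note that asymmetry implies transposition-freeness, and apply Theorem \ref{thm:transpositionFreeIndex2}. Your extra union-bound bookkeeping for non-emptiness is harmless but not needed, since an edgeless graph on at least two vertices has nontrivial automorphisms, so asymmetric graphs (apart from the single-vertex graph) are automatically non-empty, which is how the paper can pass over this point.
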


Analogous to the partially commutative groups, one can define partially commutative Lie algebras, which will play a central role in this paper. Let $\Gamma = (V, E)$ be a graph, $K$ a field, $\mathfrak{f}^K(V)$ the free Lie algebra on $V$ over $K$ and $I(\Gamma)$ the ideal of $\mathfrak{f}^K(V)$ generated by the set of Lie brackets $\{ [v, w] \mid v, w \in V,\,\{v, w\} \notin E \}$. The partially commutative Lie algebra $L^K(\Gamma)$ associated to the graph $\Gamma$ over the field $K$ is then defined as the quotient
\[ L^K(\Gamma) := \frac{\mathfrak{f}^K(V)}{I(\Gamma)}. \]
For any integer $c > 1$ the free $c$-step nilpotent partially commutative Lie algebra is then defined as
\[ L^K(\Gamma, c) = \frac{L^K(\Gamma)}{\gamma_{c+1}(L^K(\Gamma))}. \]
By the work in \cite{dm05-1} and \cite{dm21-1}, the automorphism group of $L^K(\Gamma, 2)$ was completely determined for $K$ any subfield of $\C$. Later, this result was used in \cite{dw22-1} to determine the group of graded automorphisms of $L^K(\Gamma)$ and $L^K(\Gamma, c)$ for any $c>1$.

Let $\Gamma = (V, E)$ be a graph. An equivalence relation on $V$ can be defined by saying two vertices are equivalent if and only if their transposition yields a graph automorphism. The equivalence classes are called the \textit{coherent components} and $\Lambda$ denotes the set of all coherent components. This equivalence relation gives rise to a quotient graph $\overline{\Gamma} = (\Lambda, \overline{E}, \Phi)$ of which the exact definition is given in section \ref{sec:gradedAutomorphisms}. The $R_\infty$-nilpotency index strongly depends on the sizes of adjacent coherent components in the quotient graph $\overline{\Gamma}$. To this extend we define for any non-empty graph $\Gamma$, the positive integers
\begin{equation}
	\label{eq:lowerBound}
	\xi(\Gamma) = \min \left\{ |\lambda| + |\mu| \: \middle| \: \lambda, \mu \in \Lambda,\, \{\lambda , \mu\} \in \overline{E} \right\}
\end{equation}
and
\begin{equation}
	\label{eq:upperBound}
	\Xi(\Gamma) = \min \left\{ c(\lambda, \mu) \: \middle| \: \lambda, \mu \in \Lambda, \, \{\lambda, \mu \} \in \overline{E} \right\}
\end{equation}
where for any $\lambda, \mu \in \Lambda$:
\begin{equation}
	\label{eq:clambdamu}
	c(\lambda, \mu) = \begin{cases}
		\max\{ 2|\lambda| + |\mu|, |\lambda| + 2|\mu| \} \quad & \text{if } \lambda \neq \mu\\
		2|\lambda| & \text{if } \lambda = \mu.
	\end{cases}
\end{equation}
Note that given an arbitrary graph $\Gamma$, these numbers are relatively easy to compute as the time complexity is polynomial with respect to the number of vertices. The main theorem of this paper is then formulated as follows:
\begin{theorem}
	\label{thm:allRAAGSRinfty}
	If $\Gamma$ is a non-empty graph, then $A(\Gamma)$ has the $R_\infty$-property with $R_\infty$-nilpotency index $c$ satisfying \[\xi(\Gamma) \leq c \leq \Xi(\Gamma).\]
\end{theorem}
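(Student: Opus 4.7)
The plan is to translate the $R_\infty$-property of the finitely generated torsion-free nilpotent group $A(\Gamma, c)$ into a spectral condition on its rational Mal'cev Lie algebra $L^{\Q}(\Gamma, c)$. A classical criterion for Reidemeister numbers on such groups asserts that an automorphism $\varphi$ of $A(\Gamma, c)$ satisfies $R(\varphi) = \infty$ if and only if $1$ is an eigenvalue of the induced Lie algebra automorphism $d\varphi$. Since passing to the associated graded with respect to the lower central series does not alter the spectrum on each graded piece, I may restrict to graded automorphisms of $L^{\Q}(\Gamma, c)$. By \cite{dw22-1}, the degree-$1$ part $\Phi$ of such an automorphism is block-triangular with respect to the coherent component decomposition (with cross-blocks only along the partial order on $\overline{\Gamma}$), and its diagonal blocks $\Phi_\lambda$ lie in $\GL_{|\lambda|}(\Z)$. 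As block-triangular matrices share the spectrum of their block-diagonal parts, it suffices to analyse $\Phi = \bigoplus_\lambda \Phi_\lambda$; its eigenvalues on a graded piece of $L(\Gamma, c)$ are products of eigenvalues of the $\Phi_\lambda$ taken with multiplicities prescribed by the chosen multi-degree.

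For the upper bound $c \leq \Xi(\Gamma)$, fix a pair $\{\lambda, \mu\} \in \overline{E}$ attaining $c(\lambda, \mu) = \Xi(\Gamma)$. If $\lambda = \mu$, a clique of size $p$, the subalgebra generated by $V_\lambda$ is free nilpotent of rank $p$, and its uniform multi-degree $(2, \ldots, 2)$ piece at total degree $2p$ carries the eigenvalue $(\det \Phi_\lambda)^2 = 1$. If $\lambda \ne \mu$, both are anti-cliques, and the subalgebra generated by $V_\lambda \cup V_\mu$ is a copy of the partially commutative Lie algebra of the complete bipartite graph $K_{p, q}$ with $p = |\lambda|$, $q = |\mu|$. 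A case analysis on $(\det \Phi_\lambda, \det \Phi_\mu) \in \{\pm 1\}^2$ then concludes: the three uniform multi-degree pieces with $\lambda$-letters appearing $a$ times each and $\mu$-letters $b$ times each, for $(a, b) \in \{(1, 1), (1, 2), (2, 1)\}$, have respective eigenvalues $\det \Phi_\lambda \cdot \det \Phi_\mu$, $\det \Phi_\lambda$, and $\det \Phi_\mu$, at total degrees $p + q$, $p + 2q$, and $2p + q$. In every sign pattern, at least one of these eigenvalues is $1$, achieved at a degree bounded by $\max\{2p + q,\, p + 2q\} = c(\lambda, \mu)$. The non-vanishing of these multilinear pieces in the bipartite partially commutative Lie algebra is verified by a direct Witt or Hilbert-series computation.

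For the lower bound $\xi(\Gamma) \leq c$, given $c < \xi(\Gamma)$, I construct a block-diagonal graded automorphism $\Phi = \bigoplus_\lambda \Phi_\lambda$ of $L(\Gamma, c)$ with no eigenvalue $1$. The key is to choose each $\Phi_\lambda \in \GL_{|\lambda|}(\Z)$ with irreducible characteristic polynomial over $\Q$, $\det \Phi_\lambda = -1$, and splitting fields pairwise linearly disjoint across coherent components. Under this genericity, the only multiplicative relations on the eigenvalues $\{\alpha^{(\lambda)}_i\}$ are $\prod_i \alpha^{(\lambda)}_i = -1$ per component, so an eigenvalue-$1$ weight in $L(\Gamma, c)$ must have the uniform multi-degree shape $k_\lambda (1, \ldots, 1)$ on each participating component with $\sum_\lambda k_\lambda$ even. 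Anti-clique components admit no nonzero single-component Lie words of degree $\geq 2$, while cliques force $k_\lambda$ to be even there; consequently, the minimum total degree of a nonzero piece of the required shape equals $\xi(\Gamma)$ itself, so no such piece lies in $L(\Gamma, c)$. The main technical obstacle is number-theoretic: producing $\Phi_\lambda \in \GL_{|\lambda|}(\Z)$ with irreducible characteristic polynomial, prescribed determinant $-1$, and jointly disjoint splitting fields. This is achieved by taking companion matrices of polynomials $t^{|\lambda|} - a_\lambda t^{|\lambda| - 1} + \cdots \pm 1$ with $a_\lambda$ chosen from a sparse family such as distinct large primes.
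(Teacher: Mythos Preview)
Your upper-bound argument has a genuine gap: you have mis-stated the structure of $\Aut_g(L^K(\Gamma,c))$. The description in \cite{dw22-1} (Theorem~\ref{thm:gradedAutoGroup} in the paper) is
\[
G^K(\Gamma)=\overline{P}(\Aut(\overline{\Gamma}))\cdot\Bigl(\prod_{\lambda\in\Lambda}\GL(\Sp_K(\lambda))\Bigr)\cdot\mathcal{U},
\]
and the factor $\overline{P}(\Aut(\overline{\Gamma}))$ permutes coherent components of the same size. Consequently, for a general $\varphi\in\Aut(A(\Gamma,c))$, the degree-$1$ map is \emph{not} block-triangular with invertible diagonal blocks indexed by the individual components, and there is no well-defined ``$\Phi_\lambda\in\GL_{|\lambda|}(\Z)$'' to take a determinant of. After conjugating into the reductive part (Remark~\ref{rem:semisimplePart}, Lemma~\ref{lem:reducedFormOverQ}) one obtains a product $\prod_i T(\sigma_i,B_i)$ over the cycles $\sigma_i$ of a quotient-graph automorphism $\psi$, and the eigenvalues on $L_1$ are the $k_i$-th roots $R_{k_i j}(\alpha_{it})$ of the eigenvalues of $B_i$, where $k_i$ is the cycle length. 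Your determinant argument treats precisely the case $\psi=\Id$; the substantive work in the paper (Lemmas \ref{lem:prodRootsEqual1SameSign}--\ref{lem:rootsOfProdIsProdOfRoots} and the case analysis summarised in Table~\ref{tab:proofUpperBoundCases}) is needed to manufacture an eigenvalue $1$ out of these roots of unity when $\lambda$ and $\mu$ lie in cycles of arbitrary lengths $k,l$. Without this, the bound $c\le\Xi(\Gamma)$ is not established.

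Your lower-bound sketch is closer in spirit to the paper's, but the number-theoretic step is not justified. Irreducibility of the characteristic polynomial together with pairwise linear disjointness of splitting fields does \emph{not} by itself force the only multiplicative relation among the eigenvalues of a single block to be $\prod_i\alpha^{(\lambda)}_i=\pm1$; one can have nontrivial relations among conjugate algebraic units. The paper secures exactly this independence by taking the $\alpha^{(\lambda)}_i$ to be (powers of) a Pisot unit and its conjugates (Lemmas \ref{lem:pisotUnit} and \ref{lem:polynomialsWithoutNonTrivialUnitProductsOfEigenvalues}); your ``distinct large primes as leading coefficients'' proposal does not supply an argument for this independence, nor for the claimed linear disjointness.
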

Note that a graph $\Gamma$ is transposition-free if and only if each coherent component has size one. In this case, the above theorem thus tells us that a transposition-free graph has $R_\infty$-nilpotency index $c$ equal to $2$ or $3$. Therefore Theorem \ref{thm:transpositionFreeIndex2} is really a stronger statement than Theorem \ref{thm:allRAAGSRinfty} in the transposition-free case.

As a second remark, note that the upper bound $\Xi(\Gamma)$ is always less or equal than $2r$ with $r$ the number of vertices of the graph. We thus get that the $R_\infty$-nilpotency index of $A(\Gamma)$ is always less or equal than $2r$. This agrees with the known result for free groups, which states that the $R_\infty$-nilpotency index of a free group of finite rank at least 2 is equal to two times its rank \cite{dg14-1}. Indeed, the free group of rank $r$ is isomorphic to $A(\Gamma)$ with $\Gamma$ the complete graph on $r$ vertices. This graph has only one coherent component of size $r$ and thus $\xi(\Gamma) = \Xi(\Gamma) = 2r$.

\paragraph{Acknowledgements}
The author would like to thank Maarten Lathouwers and Pieter Senden for the useful discussions on the subject.

\section{From group to graded Lie Algebra}
\label{sec:fromGroupToGradedLieAlg}
In this section, we recall a well-known construction which associates to any group a graded Lie ring. By taking the tensor product with a field $K$, we get a graded Lie algebra. It is a classical result that if one does this construction for the group $A(\Gamma)$, one obtains exactly the Lie algebra $L^K(\Gamma)$. For finitely generated nilpotent groups, this construction is particularly useful in this paper since the $R_\infty$-property on the group can be characterized by an eigenvalue condition on the induced automorphisms on the graded Lie algebra. First, let us recall what we mean by a graded Lie algebra.

\begin{definition}
	Let $L$ be a Lie algebra. We say $L$ is \textit{positively graded} if there exists vector subspaces $L_i \subset L$ for all integers $i > 0$ such that $L = \bigoplus_{i = 1}^\infty L_i$ and for any integers $i,j > 0$ it holds that $[L_i, L_j] \subset L_{i + j}$. In case $L$ is positively graded and we have fixed a positive grading, an automorphism $\varphi$ of $L$ will be called \textit{graded} if $\varphi(L_i) = L_i$ for all integers $i \geq 0$. We denote the subgroup of graded automorphisms of $L$ by $\Aut_g(L)$.
\end{definition}

Let us write $\Sp_K(S)$ for the vector space span over the field $K$ of some subset $S$ of a vector space and let $\Gamma =(V, E)$ be a graph. The free Lie algebra on the vertices $V$ is canonically positively graded by
\[ \mathfrak{f}^K(V) = \bigoplus_{i = 1}^\infty \mathfrak{f}_i^K(V) \]
where the subspace $\mathfrak{f}^K_i(V)$ are defined inductively by \[\mathfrak{f}_1^K(V) = \Sp_K(V) \quad  \text{ and } \quad \mathfrak{f}_{i+1}^K(V) = [\mathfrak{f}_1^K(V), \mathfrak{f}_i^K(V)]. \]
Let $p_1: \mathfrak{f}^K(V) \to L^K(\Gamma)$ and $p_2: L^K(\Gamma) \to L^K(\Gamma, c)$ denote the projection homomorphisms. The free (nilpotent) partially commutative Lie algebras $L^K(\Gamma)$ and $L^K(\Gamma, c)$ as defined in the introduction are positively graded as well by
\[ L^K(\Gamma) = \bigoplus_{i = 1}^\infty L^K_i(\Gamma) \quad \text{and} \quad L^K(\Gamma, c) = \bigoplus_{i = 1}^c L_i^K(\Gamma, c) \]
where
\[ L_i^K(\Gamma) = p_1(\mathfrak{f}_i^K(V)) \quad \text{and} \quad L_i^K(\Gamma, c) = (p_2 \circ p_1)(\mathfrak{f}_i^K(V)). \]
Note that $L_i^K(\Gamma, c) = \{0\}$ for any $i > c$.

Next, let us recall the classical construction which associates to any group a positively graded Lie algebra. Let $G$ be a group. For any $i \geq 1$, the quotient group $L_i(G) = \gamma_{i}(G)/\gamma_{i+1}(G)$ is abelian and thus it carries a $\Z$-module structure. The Lie ring $L(G)$ is defined as the direct sum of these $\Z$-modules:
\[ L(G) := \bigoplus_{i = 1}^{\infty} L_i(G), \]
together with a Lie bracket
\[ [g\, \gamma_{i+1}(G), h\, \gamma_{j+1}(G)] := [g,h] \gamma_{i+j+1}(G)\]
for any $i,j \geq 1, \, g \in \gamma_{i}(G)$ and $h \in \gamma_{j}(G)$. Details of this construction can for example be found in section 2 of \cite{wade15-1}. By taking the tensor product of $L(G)$ with a field $K$, we get a Lie algebra $L^K(G) := L(G) \otimes_\Z K = \bigoplus_{i = 1}^\infty L_i(G) \otimes_\Z K$ where we will write $L^K_i(G) := L_i(G) \otimes_\Z K$. Note that the Lie algebra $L^K(G)$ can be infinite dimensional, but if $G$ is finitely generated nilpotent $L^K(G)$ necessarily has finite dimension. It is a classical result that the lower central series of the Lie algebra $L^K(G)$ can be expressed as
\[ \gamma_j(L^K(G)) = \bigoplus_{i = j}^\infty L_i^K(G). \]

Clearly, $L^K(G)$ is a positively graded Lie algebra, where the grading is given by the subspaces $L_i^K(G)$. Note that any automorphism $\varphi$ of the group $G$ induces $\Z$-module automorphisms $\varphi_i:L_i(G) \to L_i(G)$ for all $i \geq 1$. On the tensor product with $K$, we thus get for each $i$ a linear map $\overline{\varphi_i} = \varphi_i \otimes \Id :L_i^K(G) \to L_i^K(G)$. Combining these automorphisms $\overline{\varphi}_i$, we get a graded Lie algebra automorphism $\overline{\varphi} \in \Aut_g(L^K(G))$ defined by
\[ \overline{\varphi}:L^K(G) \to L^K(G): x \mapsto \overline{\varphi_i}(x) \quad \forall x \in L_i^K(G).  \]

\begin{remark}
	\label{rem:charPolyIntegral}
	Let $H$ be a finitely generated abelian group and write $\tau(H)$ for its torsion subgroup which is characteristic in $H$. Let the field $K$ be of characteristic 0. Then there is a natural isomorphism between $\frac{H}{\tau(H)} \otimes_\Z K$ and $H \otimes_\Z K$. If $f$ is an automorphism of $H$, it also induces an automorphism on the free $\Z$-module $H/\tau(H)$ which can be represented by a matrix in $\GL(n,\Z)$ with respect to some basis for $H/\tau(H)$. The induced linear map on $\frac{H}{\tau(H)} \otimes_\Z K \cong H \otimes_\Z K$ can therefore be represented by the same matrix and will thus have a characteristic polynomial with integer coefficients and constant term equal to $\pm 1$. Applying this to the abelian groups $L_i^K(G)$ from above for a field $K$ of characteristic 0, we see that for all $i > 0$, $\overline{\varphi_i}$ has characteristic polynomial with coefficients in $\Z$ and constant term equal to $\pm 1$.
\end{remark}

The following lemma motivates why one would look at the induced automorphism on the Lie algebra when studying the $R_\infty$-property on finitely generated nilpotent groups. It was stated in \cite{dg14-1} as a generalization of corollary 4.2 from \cite{roma11-1}.
\begin{lemma}
	\label{lem:reidemeisterNumberFinGenNilpGroup}
	Let $G$ be a finitely generated nilpotent group and $\varphi \in \Aut(G)$. Then $R(\varphi) = \infty$ if and only if 1 is an eigenvalue of $\overline{\varphi} \in \Aut_g(L^\C(G))$.
\end{lemma}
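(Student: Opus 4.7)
The plan is to reduce the statement to the finitely generated abelian case via the lower central series. Since $G$ is finitely generated nilpotent, $\gamma_{c+1}(G)=1$ for some $c$, and each $\gamma_i(G)$ is characteristic and hence $\varphi$-invariant. This produces, for every $i$, a central short exact sequence
\[
  1 \to L_i(G) \to G/\gamma_{i+1}(G) \to G/\gamma_{i}(G) \to 1,
\]
in which $\varphi$ induces the automorphism $\varphi_i \in \Aut(L_i(G))$ on the kernel and an automorphism on the quotient. The first step is to show, by induction on the nilpotency class, that $R(\varphi) = \infty$ if and only if $R(\varphi_i) = \infty$ for some $i$. This is precisely the content of Corollary 4.2 of Roman'kov cited in the excerpt, which I would invoke directly; the underlying mechanism is that for a central extension with finitely generated abelian kernel, finiteness of the Reidemeister number of the total automorphism is equivalent to finiteness of the Reidemeister numbers of the induced automorphisms on both kernel and quotient.

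Next, I would resolve the abelian case. For a finitely generated abelian group $A$ and $\psi \in \Aut(A)$, the Reidemeister classes of $\psi$ are precisely the cosets of the subgroup $(I-\psi)(A)$, so $R(\psi) = |A/(I-\psi)A|$. In particular, this cardinality is infinite if and only if $(I-\psi)$ fails to be injective after tensoring with $\Q$, equivalently with $\C$. By Remark \ref{rem:charPolyIntegral}, $\psi \otimes \Id_\C$ on $A \otimes_\Z \C$ is represented by an integer matrix, so injectivity of $(I - \psi \otimes \Id_\C)$ is governed by its determinant, and the failure of injectivity is equivalent to $1$ being a root of the characteristic polynomial, i.e.\ an eigenvalue of $\psi \otimes \Id_\C$.

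Applying this abelian criterion to each $\varphi_i$ on $L_i(G)$ shows that $R(\varphi_i) = \infty$ if and only if $1$ is an eigenvalue of $\overline{\varphi_i}$ on $L_i^\C(G)$. Since the decomposition $L^\C(G) = \bigoplus_{i \geq 1} L_i^\C(G)$ is $\overline{\varphi}$-invariant with $\overline{\varphi}|_{L_i^\C(G)} = \overline{\varphi_i}$, the spectrum of $\overline{\varphi}$ is the union of the spectra of the $\overline{\varphi_i}$. Combining this with the first reduction step gives the desired equivalence.

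The main obstacle lies in the first reduction: Reidemeister numbers do not in general behave multiplicatively under short exact sequences, so there is no clean formula relating $R(\varphi)$ to the $R(\varphi_i)$ in arbitrary settings. The argument works here because the extensions arising from the lower central series are central and the kernels $L_i(G)$ are finitely generated abelian, which is exactly the context in which the careful bookkeeping of twisted conjugacy classes across the filtration goes through. Once that reduction is granted via the cited result of Roman'kov, the remainder of the argument is elementary linear algebra over $\Z$.
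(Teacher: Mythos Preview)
The paper does not supply its own proof of this lemma; it simply quotes the result from \cite{dg14-1}, noting that it generalizes Corollary~4.2 of \cite{roma11-1}. Your sketch---filter by the lower central series, use the behaviour of Reidemeister numbers along the resulting central extensions to reduce to the abelian quotients $L_i(G)$, and then handle the abelian case via the cokernel of $I-\psi$---is exactly the argument that underlies those references, so there is nothing substantive to compare.

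One small caution: you attribute the reduction step directly to Roman'kov's Corollary~4.2, but the paper is careful to say that the present statement is a \emph{generalization} of that corollary appearing in \cite{dg14-1}. Roman'kov's original result is formulated for finitely generated \emph{torsion-free} nilpotent groups; the passage to groups with possible torsion (which is what you need, since the $L_i(G)$ may have torsion) is the content of the generalization. Your treatment of the abelian case via Remark~\ref{rem:charPolyIntegral}, implicitly passing to $A/\tau(A)$ before tensoring with $\C$, correctly accounts for this, so in effect you are reproducing the argument of \cite{dg14-1} rather than merely invoking \cite{roma11-1}.
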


In \cite{kd92-1} it was proven that the Lie algebra $L^K(A(\Gamma))$ is isomorphic to the partially commutative Lie algebra $L^K(\Gamma)$. An explicit isomorphism is given by sending the vertices in $L^K(\Gamma)$ to the corresponding vertices in $L^K\left(A(\Gamma)\right)$. Moreover, this isomorphism preserves the grading of both Lie algebras. By consequence, we get for all integers $c > 1$ the isomorphisms
\[ L^K (\Gamma, c) \cong \frac{L^K\left(A(\Gamma)\right)}{\gamma_{c+1}\left(L^K\left(A(\Gamma)\right)\right)} \cong L^K\left(A(\Gamma, c)\right), \]
where each isomorphism is induced by the identity on the vertices of $\Gamma$. Thus, if we want to apply Lemma \ref{lem:reidemeisterNumberFinGenNilpGroup} to the group $A(\Gamma, c)$, it will be useful to know what the group $\Aut_g\left(L^K (\Gamma, c)\right)$ looks like. Since every element in this group maps $\Sp_K(V)$ onto itself, we get a group homomorphism
\[ \pi_c:\Aut_g(L^K (\Gamma, c)) \to \GL(\Sp_K(V)): \varphi \mapsto \varphi|_{\Sp_K(V)}. \]
It is well known that $\Sp_K(V)$ generates $L^K (\Gamma, c)$ as a Lie algebra. As a consequence, any graded automorphism of $L^K (\Gamma, c)$ is completely determined by its induced map on the abelianization and thus $\pi_c$ is a group isomorphism onto its image. In the next section we discuss the image of $\pi_c$.

\section{The group of graded automorphisms of $L^K(\Gamma,c)$}
\label{sec:gradedAutomorphisms}

In this section we state a known result which describes the group of graded automorphisms of $L^K(\Gamma,c)$ for $K$ any subfield of $\C$. Before we can do so, we need to introduce some definitions. Consider a graph $\Gamma = (V, E)$. For any vertex $v \in V$, the open and closed neighbourhoods of $v$ are defined as
\begin{equation}
	\label{eq:neighbourhoodVertex}
	\Omega'(v) = \{ w \in V \mid \{w, v\} \in E \} \quad \text{and} \quad \Omega(v) = \Omega'(v) \cup \{v\},
\end{equation}
respectively. A relation $\prec$ on $V$ is defined as
\[ v \prec w \: \Leftrightarrow \: \Omega'(v) \subseteq \Omega(w). \]
An equivalence relation $\sim$ is then defined as
\[ v \sim w \: \Leftrightarrow \: v \prec w \text{ and } v \succ w. \]
The equivalence classes are referred to as the \textit{coherent components} and the set of coherent components is denoted by $\Lambda = V/\sim$. Note that $v \sim w$ is equivalent to saying that the transposition of $v$ and $w$ yields a graph automorphism of $\Gamma$.
\begin{remark}
	\label{rem:quotientgraph}
	Let $\lambda, \mu \in \Lambda$ be two not necessarily distinct coherent components. It is straightforward to verify that if there exist $v \in \lambda$, $w \in \mu$ such that $\{v, w\} \in E$ then for any $v' \in \lambda$ and $w' \in \mu$ such that $v' \neq w'$ it holds that $\{v', w'\} \in E$.
\end{remark}

The \textit{quotient graph} of $\Gamma$ is defined as the 3-tuple $\overline{\Gamma} = (\Lambda, \overline{E}, \Psi)$ where \[\overline{E} = \{ \{\lambda, \mu\} \mid \exists v \in \lambda, \, w \in \mu:\: \{v,w\} \in E \} \quad \text{and} \quad \Psi:\Lambda \to \N: \lambda \mapsto |\lambda|.\]
Note that the quotient graph can have \textit{loops}, i.e singletons in $\overline{E}$. An automorphism of the quotient graph $\overline{\Gamma}$ is a bijection $\varphi:\Lambda \to \Lambda$ which satisfies $\varphi(\overline{E}) = \overline{E}$ and $\Psi \circ \varphi = \Psi$. The group of all automorphisms of $\overline{\Gamma}$ is written as $\Aut(\overline{\Gamma})$. It is clear that any automorphism $\varphi \in \Aut(\Gamma)$ induces an automorphism $p(\varphi) \in \Aut(\overline{\Gamma})$, by letting $p(\varphi)(\lambda) = \varphi(\lambda)$ for any $\lambda \in \Lambda$. This gives a group morphism $p$, which fits in the short exact sequence
\begin{equation*}
	1  \longrightarrow \prod_{\lambda \in \Lambda} S(\lambda) \longrightarrow \Aut(\Gamma) \stackrel{p}{\longrightarrow} \Aut(\overline{\Gamma}) \longrightarrow 1,
\end{equation*}
where $S(X)$ denotes the symmetric group on the set $X$. Moreover, this sequence is right split. With other words, there exists a group morphism $r:\Aut(\overline{\Gamma}) \to \Aut(\Gamma)$ such that $p \circ r = \Id$. We construct such a morphism $r$ by first fixing an order on the vertices inside each coherent component $\lambda = \{ v_{\lambda 1}, \ldots , v_{\lambda \Psi(\lambda)} \}$ and then defining for any $\varphi \in \Aut(\overline{\Gamma})$ the automorphism $r(\varphi) \in \Aut(\Gamma)$ by $r(\varphi)(v_{\lambda i}) = v_{\varphi(\lambda) i}$ for any $\lambda \in \Lambda$ and $i \in \{1, \ldots, \Psi(\lambda)\}$. We fix this order on the vertices inside each coherent component and this morphism $r$ for the remainder of this paper.
For any vertices $v,w \in V$, let $E_{vw}$ denote the linear endomorphism on $\Sp_K(V)$ determined by $E_{vw}(w) = v$ and $E_{vw}(v') = 0$ for any $v' \in V$ with $v' \neq w$. We define the subgroup $\mathcal{U} \leq \GL(\Sp_K(V))$ as
\[ \mathcal{U} := \left\langle I + tE_{vw} \: \middle| \: v \prec w,\, v \not\sim w, \, t \in K \right\rangle, \]
where $I$ denotes the identity map on $\Sp_K(V)$.
For any graph automorphism $\varphi \in \Aut(\Gamma)$ let us write $P_\varphi$ for the linear map on $\Sp_K(V)$ defined by $P_\varphi(v) = \varphi(v)$ for any $v \in V$. This gives a group morphism $P:\Aut(\Gamma) \to \GL(\Sp_K(V)): \varphi \mapsto P_\varphi$. One can also compose this morphism with the morphism $r$ from before, giving a group morphism
\[\overline{P} := P \circ r : \Aut(\overline{\Gamma}) \to \GL(\Sp_K(V)).\] 
For any coherent component $\lambda \in \Lambda$, we identify the group $\GL(\Sp_K(\lambda))$ with a subgroup of $\GL(\Sp_K(V))$ where a linear map $f \in \GL(\Sp_K(\lambda))$ is identified with the linear map in $\GL(\Sp_K(V))$ which maps a vertex $v \in V$ to itself if $v \notin \lambda$ and $v$ to $f(v)$ if $v \in \lambda$.

The following was proven in \cite{dm21-1} for $c = 2$ and later generalized in \cite{dw22-1} to $c > 2$.

\begin{theorem}
	\label{thm:gradedAutoGroup}
	For any $c > 1$ and any field $K \subset \C$, the image of $\pi_c$ in $\GL(\Sp_K(V))$ is a linear algebraic group given by
	\[ G^K(\Gamma) := \overline{P}(\Aut(\overline{\Gamma})) \cdot \left( \prod_{\lambda \in \Lambda} \GL(\Sp_K(\lambda)) \right) \cdot \mathcal{U} \]
	where $\mathcal{U}$ is the unipotent radical of $G^K(\Gamma)$.
\end{theorem}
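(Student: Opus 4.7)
The plan is to prove the two inclusions $\im(\pi_c) \supseteq G^K(\Gamma)$ and $\im(\pi_c) \subseteq G^K(\Gamma)$ separately. A useful initial observation is that a graded automorphism of $L^K(\Gamma, c)$ is uniquely determined by its degree-one restriction, and extendability of a candidate map $f \in \GL(\Sp_K(V))$ is controlled purely by the quadratic commutation relations $[v, w] = 0$ for non-edges $\{v, w\} \notin E$. So $\im(\pi_c)$ is in fact independent of $c$, and the problem reduces to identifying the subgroup of $\GL(\Sp_K(V))$ consisting of invertible linear maps whose natural extension to the free Lie algebra on $V$ preserves the defining ideal $I(\Gamma)$.

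For $G^K(\Gamma) \subseteq \im(\pi_c)$, I would verify that each of the three types of generators extends to a graded Lie algebra automorphism: (i) the lifts $\overline{P}(\psi)$ of quotient-graph automorphisms $\psi \in \Aut(\overline{\Gamma})$ extend because $r(\psi)$ is a genuine graph automorphism of $\Gamma$ and hence preserves every defining relation; (ii) any element of $\GL(\Sp_K(\lambda))$ for a coherent component $\lambda$ extends because all vertices inside $\lambda$ share the same external neighborhoods and, by Remark \ref{rem:quotientgraph}, either all pairwise commute or none do, so any linear recombination among them respects every relation $[x, y] = 0$ with $\{x, y\} \notin E$; (iii) a transvection $I + tE_{vw}$ in $\mathcal{U}$ extends because the condition $v \prec w$ is precisely what is needed to ensure that $[w + tv, x]$ still vanishes whenever $[w, x]$ does.

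For the reverse inclusion, I would take $f \in \im(\pi_c)$, express it as a matrix in the vertex basis grouped by coherent components, and exploit the polynomial constraints $[f(v), f(w)] = 0$ in $L^K(\Gamma, 2)$ arising from non-edges. Extracting their combinatorial meaning should show that $f$ sends each $\Sp_K(\lambda)$ into $\Sp_K(\mu)$ for a single coherent component $\mu$, modulo contributions from components strictly below $\mu$ in the $\prec$-order. A Gaussian-elimination-style argument, multiplying by suitable transvections from $\mathcal{U}$, should then clear these lower-order contributions and reduce $f$ to block form in which the coherent components are permuted by an element of $\Aut(\overline{\Gamma})$ and each diagonal block lies in $\GL(\Sp_K(\lambda))$. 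The combinatorics of this reduction is where I expect the main obstacle: one must carefully track how the partial order $\prec$ interacts with the coherent components and handle separately the cases where $\lambda$ carries a loop in $\overline{\Gamma}$ (its vertices mutually commute) versus the loop-free case.

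The remaining claims — that $G^K(\Gamma)$ is a linear algebraic group and that $\mathcal{U}$ is its unipotent radical — then follow formally from the product decomposition: the factors $\overline{P}(\Aut(\overline{\Gamma}))$ and $\prod_\lambda \GL(\Sp_K(\lambda))$ together form a reductive Levi complement, while $\mathcal{U}$ is manifestly a closed, normal, unipotent subgroup, and the image is cut out of $\GL(\Sp_K(V))$ by the polynomial equations recording preservation of the quadratic relations.
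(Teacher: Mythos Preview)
The paper does not prove this theorem: it is simply quoted as a known result, established for $c=2$ in \cite{dm21-1} and extended to all $c>1$ in \cite{dw22-1}. So there is no in-paper proof to compare against; your proposal is an attempt to reconstruct what those references do.

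Your preliminary observation that $\im(\pi_c)$ is independent of $c$ is correct and is precisely how the passage from $c=2$ to arbitrary $c$ is handled in \cite{dw22-1}: since the ideal $I(\Gamma)$ is generated in degree $2$, a linear map on $\Sp_K(V)$ extends to a graded automorphism of $L^K(\Gamma,c)$ if and only if it does so for $c=2$. Your verification of the inclusion $G^K(\Gamma)\subseteq\im(\pi_c)$ is also correct; the three checks (i)--(iii) you describe are exactly the routine ones.

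The reverse inclusion is the substantive content of \cite{dm21-1}, and here your proposal is a strategy rather than a proof. The key assertion---that an arbitrary $f\in\im(\pi_2)$ must send each $\Sp_K(\lambda)$ into some $\Sp_K(\mu)$ modulo $\prec$-lower components, and that the resulting assignment $\lambda\mapsto\mu$ is an automorphism of $\overline{\Gamma}$---is not a step one can expect to fall out of ``extracting combinatorial meaning'' from the quadratic relations without real work. In \cite{dm21-1} this takes a careful analysis of the degree-$2$ part of $L^K(\Gamma)$ and its automorphisms; you have correctly located the obstacle but not supplied the argument. Likewise, the final claim that $\mathcal{U}$ is normal in $G^K(\Gamma)$ (needed for it to be the unipotent radical) requires checking how the reductive factor acts on the transvections, which you have not addressed. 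As written, the proposal is a sound outline whose hard half would amount to reproducing the cited papers.
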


\begin{remark}
	\label{rem:semisimplePart}
	Let $K$ be any subfield of $\C$ and $g \in G^K(\Gamma)$ a semi-simple element, i.e. an element whose matrix w.r.t. the basis $V$ is diagonalizable over $\C$. Then $g$ lies in some linearly reductive subgroup of the linear algebraic group $G^K(\Gamma)$. Note as well that
	\[\mathcal{R} := \overline{P}(\Aut(\overline{\Gamma})) \cdot \prod_{\lambda \in \Lambda} \GL(\Sp_K(\lambda))\]
	is a linearly reductive subgroup of $G^K(\Gamma)$ such that $G^K(\Gamma) \cong \mathcal{R} \ltimes \mathcal{U}$. It thus follows from Theorem 4.3. in chapter VIII in \cite{hoch81-1} that there exists an $h \in G^K(\Gamma)$ such that $h g h^{-1} \in \mathcal{R}$. In particular, this is valid for $K = \Q$ which will be of use in the next section.
\end{remark}

\subsection{Automorphisms with the same characteristic polynomial}

The goal of this section is to find for any automorphism $\varphi$ of the group $A(\Gamma, c)$ an automorphism $f \in \Aut_g(L^\C(\Gamma, c))$ for which it is clear what the eigenvectors and eigenvalues on $L_1^\C(\Gamma, c)$ are and such that $f$ and the induced automorphism $\overline{\varphi} \in \Aut_g(L^\C(\Gamma, c))$ have the same characteristic polynomial. Moreover, the fact that $\overline{\varphi}$ has characteristic polynomial with integer coefficients and constant term equal to $\pm 1$ (see Remark \ref{rem:charPolyIntegral}), will give us relations among the eigenvalues of $f$ on $L_1^\C(\Gamma, c)$. At the end of the section, we also prove a corollary of these results which says that any automorphism of the rational Lie algebra $L^\Q(\Gamma, c)$ which has characteristic polynomial with coefficients in $\Z$ and determinant equal to $\pm 1$ has a characteristic polynomial which can be realized as the characteristic polynomial of an automorphism of the group $A(\Gamma, c)$.

For notational purposes, we introduce the following terminology.

\begin{definition}
	A linear map or matrix over a field $K \subset \C$ will be called \textit{integer-like} if its characteristic polynomial has integer coefficients and constant term equal to $\pm 1$.
\end{definition}

By considering the Frobenius normal form or rational canonical form of a matrix, it is clear that a matrix in $\GL(n, K)$ is integer-like if and only if it is conjugated to a matrix in $\GL(n, \Z)$ by a matrix in $\GL(n, K)$. From this observation it follows that powers of integer-like matrices remain integer-like. For any subfield $K \subset \C$, any integers $k > 1$, $n_1, \ldots, n_k > 0$ and any matrices $B_i \in \GL(n_i, K)$, let $\diag(B_1, \ldots, B_n)$ denote the block-diagonal matrix with the $B_i$'s on the diagonal. Using the fact that the eigenvalues of integer-like matrices are algebraic units, the following can be proven about integer-like block-diagonal matrices over $\Q$.

\begin{lemma}
	\label{lem:integerLikeBlockDiagonal}
	Let $k$ and $n_1, \ldots, n_k$ be positive integers and $B_i \in \GL(n_i, \Q)$ matrices and let $B = \diag(B_1, \ldots, B_k)$. If $B$ is integer-like, then so is every block $B_i$.
\end{lemma}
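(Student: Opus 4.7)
The plan is to exploit the multiplicativity of characteristic polynomials under direct sums together with the observation already made in the paper that the eigenvalues of an integer-like matrix are algebraic units. Concretely, writing $\chi_M$ for the characteristic polynomial of a matrix $M$, we have the identity
\[ \chi_B(x) = \prod_{i=1}^{k} \chi_{B_i}(x), \]
so the eigenvalues of each $B_i$ (with multiplicity) form a subset of the eigenvalues of $B$.

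Next I would argue that each $\chi_{B_i}$ lies in $\Z[x]$. Since $B$ is integer-like, $\chi_B \in \Z[x]$ is monic, so all eigenvalues of $B$ are algebraic integers; by the previous paragraph this applies to the roots of each $\chi_{B_i}$. Because $B_i$ is a rational matrix, $\chi_{B_i}$ is a monic polynomial in $\Q[x]$, and its coefficients are (up to sign) the elementary symmetric functions in those roots, hence algebraic integers. An algebraic integer that is rational must be in $\Z$, and consequently $\chi_{B_i}(x) \in \Z[x]$.

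It remains to show that the constant term of each $\chi_{B_i}$ is $\pm 1$. Evaluating the product formula at $x = 0$ yields
\[ \pm 1 = \chi_B(0) = \prod_{i=1}^{k} \chi_{B_i}(0), \]
and by the previous step each factor $\chi_{B_i}(0)$ is an integer. The only way a product of integers can equal $\pm 1$ is if each factor is $\pm 1$, which is exactly the statement that each $B_i$ is integer-like.

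I do not expect any real obstacle here: the argument is a clean two-line use of ``monic polynomial with algebraic integer roots and rational coefficients is in $\Z[x]$'' combined with the fact that the only units in $\Z$ are $\pm 1$. The only subtlety worth spelling out is why the roots of $\chi_{B_i}$ are algebraic integers in the first place, which follows precisely from the fact that they are eigenvalues of the ambient integer-like matrix $B$.
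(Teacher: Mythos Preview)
Your argument is correct and is essentially the approach the paper sketches (it only hints that one uses the fact that eigenvalues of integer-like matrices are algebraic units, without writing out the details). The one minor difference is that the paper's hint would have you conclude $\chi_{B_i}(0)=\pm 1$ from each eigenvalue being an algebraic \emph{unit} (so the constant term is a rational algebraic unit), whereas you use only that the eigenvalues are algebraic \emph{integers} and then deduce $\chi_{B_i}(0)=\pm 1$ from the product identity $\prod_i \chi_{B_i}(0)=\chi_B(0)=\pm 1$; both routes are equally valid and equally short.
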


Let $\Gamma = (V, E)$ be a graph and recall that in the previous section we fixed an order on the vertices inside each coherent component as $\lambda = \{ v_{\lambda 1}, \ldots , v_{\lambda |\lambda|} \}$. For any integer $n > 0$, let $I_n$ denote the $n \times n$ identity matrix. For any $k$-cycle $\sigma = (\lambda_1 \ldots \lambda_k) \in S(\Lambda)$ on the coherent components with $|\lambda_1| = \ldots = |\lambda_k| = n$ and any $B \in \GL(n, K)$, we define the linear map $T(\sigma, B) \in \GL(\Sp_K(V))$ as represented by the matrix $B$ if $k = 1$ and by the matrix
\[ \begin{pmatrix}
0 & & & B\\
I_n & 0 & &\\
& \ddots & \ddots & \\
& & I_n & 0
\end{pmatrix}\]
if $k > 1$ with respect to the vertices $v_{\lambda_1 1}, \ldots, v_{\lambda_1 n}, v_{\lambda_2 1}, \ldots, v_{\lambda_2 n}, \ldots, v_{\lambda_k 1},\ldots, v_{\lambda_k n}$ and the identity on all other vertices. Let $\psi \in \Aut(\overline{\Gamma})$ be an automorphism of the quotient graph with disjoint cycle decomposition $\psi = \sigma_1 \circ \ldots \circ \sigma_d$ (where 1-cycles are not omitted) and write $n_i$ for the size of the coherent components in the cycle $\sigma_i$. Note that for any matrices $B_i \in \GL(n_i, K)$, the linear map $\prod_{i = 1}^d T(\sigma_i, B_i)$ is an element of $G^K(\Gamma)$.

\begin{lemma}
	\label{lem:reducedFormOverQ}
	Let $\Gamma = (V, E)$ be a graph and $g \in G^\Q(\Gamma)$ a semi-simple integer-like element. Then there exists a $\psi \in \Aut(\overline{\Gamma})$ with disjoint cycle decomposition $\psi = \sigma_1 \circ \ldots \circ \sigma_d$ (where 1-cycles are not omitted) and for each cycle $\sigma_i$ consisting of coherent components of size $n_i$, an integral semi-simple matrix $B_i \in \GL(n_i, \Z)$ such that $g$ is conjugated to $\prod_{i = 1}^d T(\sigma_i, B_i)$ in $G^\Q(\Gamma)$.
\end{lemma}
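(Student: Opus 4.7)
The plan is to peel off the semi-simple element $g$ in three stages, exploiting the semidirect product decomposition $G^\Q(\Gamma) \cong \mathcal{R} \ltimes \mathcal{U}$ recalled in Remark \ref{rem:semisimplePart}. Since $g$ is semi-simple, that remark already delivers a conjugate of $g$ inside the Levi subgroup $\mathcal{R}$, so from the outset I may assume
\[ g = \overline{P}(\psi) \cdot \prod_{\lambda \in \Lambda} g_\lambda \]
for some $\psi \in \Aut(\overline{\Gamma})$ and some $g_\lambda \in \GL(\Sp_\Q(\lambda))$.

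The next step is to massage the permutation part $\overline{P}(\psi)$ together with the diagonal blocks into the prescribed normal form $T(\sigma_i, B_i)$. Decompose $\psi = \sigma_1 \circ \ldots \circ \sigma_d$ into disjoint cycles (including $1$-cycles). Because $\Psi \circ \psi = \Psi$, all coherent components in a single cycle $\sigma_i = (\lambda_{i,1} \ldots \lambda_{i,k_i})$ share a common size $n_i$. On the $g$-invariant subspace $\bigoplus_{j} \Sp_\Q(\lambda_{i,j})$ with basis ordered by the fixed order on each $\lambda_{i,j}$, the matrix of $g$ has its only nonzero blocks equal to $g_{\lambda_{i,j}}$ at position $(j{+}1, j)$ (indices mod $k_i$). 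Conjugating by the block-diagonal matrix $D_i = \diag(D_{i,1}, \ldots, D_{i,k_i})$ with $D_{i,1} = I_{n_i}$ and $D_{i,j+1} = D_{i,j}\, g_{\lambda_{i,j}}^{-1}$ clears the subdiagonal into identity blocks and concentrates everything in the upper-right corner as
\[ B_i = g_{\lambda_{i,k_i}}\, g_{\lambda_{i,1}}\, g_{\lambda_{i,2}} \cdots g_{\lambda_{i,k_i-1}} \in \GL(n_i, \Q). \]
Each $D_{i,j}$ lies in $\GL(\Sp_\Q(\lambda_{i,j}))$, so the combined conjugator sits in $\prod_\lambda \GL(\Sp_\Q(\lambda)) \subseteq G^\Q(\Gamma)$, and $g$ is now conjugate in $G^\Q(\Gamma)$ to $\prod_{i=1}^d T(\sigma_i, B_i)$.

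The last stage is to promote each $B_i \in \GL(n_i, \Q)$ to an integer matrix. Reordering the basis so that the coherent components of each cycle appear consecutively displays $\prod_i T(\sigma_i, B_i)$ as a genuine block-diagonal matrix with blocks $T(\sigma_i, B_i)$; since integer-likeness is conjugation-invariant, Lemma \ref{lem:integerLikeBlockDiagonal} gives that every $T(\sigma_i, B_i)$ is integer-like. A direct computation yields $T(\sigma_i, B_i)^{k_i} = \diag(B_i, \ldots, B_i)$, and from this one reads off the identity
\[ \det(xI - T(\sigma_i, B_i)) = \det(x^{k_i} I_{n_i} - B_i); \]
comparing coefficients forces the characteristic polynomial of $B_i$ to lie in $\Z[x]$ with constant term $\pm 1$, i.e.\ $B_i$ is integer-like, while semi-simplicity of $B_i$ follows from semi-simplicity of $T(\sigma_i, B_i)^{k_i}$. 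Finally, by the observation recalled at the start of this section, each such $B_i$ is $\GL(n_i, \Q)$-conjugate to some $B_i' \in \GL(n_i, \Z)$ via a matrix $C_i$; conjugating $T(\sigma_i, B_i)$ by the block-diagonal matrix $\diag(C_i, \ldots, C_i)$ (the \emph{same} $C_i$ placed on every coherent component of the cycle $\sigma_i$) preserves the shape of $T(\sigma_i, \cdot)$ and replaces $B_i$ by $B_i'$. This final conjugator again lies in $\prod_\lambda \GL(\Sp_\Q(\lambda)) \subseteq G^\Q(\Gamma)$, which finishes the reduction.

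The step I expect to be the most delicate is the transfer of integer-likeness from $T(\sigma_i, B_i)$ down to $B_i$: the identity $\det(xI - T(\sigma_i, B_i)) = \det(x^{k_i} I_{n_i} - B_i)$ has to be justified carefully (for instance through a block row/column manipulation or the companion-matrix formula), and then one must verify that comparing coefficients really does recover \emph{all} coefficients of the characteristic polynomial of $B_i$ from those of $T(\sigma_i, B_i)$, including the constant term.
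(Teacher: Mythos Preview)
Your argument is correct and follows essentially the same three-stage strategy as the paper: move into $\mathcal{R}$ via Remark~\ref{rem:semisimplePart}, conjugate by a block-diagonal element of $\prod_\lambda \GL(\Sp_\Q(\lambda))$ to reach the $T(\sigma_i,B_i)$ shape, then conjugate each $B_i$ into $\GL(n_i,\Z)$.

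The only difference worth noting is the step you yourself flag as delicate. Having already computed $T(\sigma_i,B_i)^{k_i}=\diag(B_i,\ldots,B_i)$, the paper simply observes that powers of integer-like matrices are integer-like and then applies Lemma~\ref{lem:integerLikeBlockDiagonal} once more to that block-diagonal power; this immediately gives that $B_i$ is integer-like, with no need for the identity $\det(xI-T(\sigma_i,B_i))=\det(x^{k_i}I_{n_i}-B_i)$ or any coefficient comparison. Since you have all the ingredients for this shorter route already in hand, you may prefer it. (Incidentally, the explicit product you write for $B_i$ has the factors in the wrong order, but the precise formula is never used.)
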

\begin{proof}
	By Theorem \ref{thm:gradedAutoGroup} and Remark \ref{rem:semisimplePart}, there exists a $\psi \in \Aut(\overline{\Gamma})$ and a $C \in  \prod_{\lambda \in \Lambda} \GL(\Sp_\Q(\lambda))$ such that $g$ is conjugated to $\overline{P}(\psi) C$ in $G^\Q (\Gamma)$. As a consequence, $\overline{P}(\psi) C$ is also semi-simple and integer-like. Let $\psi = \sigma_1 \circ \ldots \circ \sigma_d$ be a disjoint cycle decomposition of $\psi$ with $\sigma_i = (\lambda_{i1}, \ldots, \lambda_{ik_i})$ and $|\lambda_{i1}| = \ldots = |\lambda_{ik_i}| = n_i$. For any $i \in \E{d}$, define the subspaces $W_i = \Sp_\Q(\lambda_{i1} \cup \ldots \cup \lambda_{ik_i})$. It follows that each $W_i$ is invariant under $\overline{P}(\psi) C$. Define $h_i := (\overline{P}(\psi) C)|_{W_i}$. It follows that for each $i \in \E{d}$, $h_i$ is semi-simple and using Lemma \ref{lem:integerLikeBlockDiagonal} that $h_i$ is integer-like.
	
	Note that for any $\lambda \in \{ \lambda_{i1}, \ldots, \lambda_{ik_i} \}$ we have $h_i(\Sp_\Q(\lambda)) = \Sp_\Q(\sigma_i(\lambda))$. By consequence we get that the set $\lambda_{i1} \cup h_i(\lambda_{i1}) \cup \ldots \cup h_i^{k_i -1}(\lambda_{i1})$ is a basis for $W_i$. Clearly, with respect to this basis, $h_i$ is represented by a matrix $\tilde{B}_i \in \GL(n_i, \Q)$ if $k_i =1$ and by the matrix
	\[ \begin{pmatrix}
		0 & & & \tilde{B}_i\\
		I_{n_i} & 0 & &\\
		& \ddots & \ddots & \\
		& & I_{n_i} & 0
	\end{pmatrix}\]
	if $k_i > 1$ for some $\tilde{B}_i \in \GL(n_i, \Q)$. This is equivalent to saying that $\overline{P}(\psi) C$ is conjugated to $\prod_{i = 1}^d T(\sigma_i, \tilde{B}_i)$ by an element of $\prod_{\lambda \in \Lambda} \GL(\Sp_\Q(\lambda)) \subset G^\Q(\Gamma)$. Again, we have that each $T(\sigma_i, \tilde{B}_i)|_{W_i}$ is semi-simple and integer-like and the same must hold for its $k_i$'th power $\left( T(\sigma_i, \tilde{B}_i)|_{W_i} \right)^{k_i}$ which is represented by the matrix $\diag(\tilde{B}_i, \ldots, \tilde{B}_i)$ with respect to the basis $\lambda_{i1} \cup \ldots \cup \lambda_{ik_i}$. We thus get that $\tilde{B}_i$ is semi-simple and using Lemma \ref{lem:integerLikeBlockDiagonal} that $\tilde{B}_i$ is integer-like. By consequence, there exists a matrix $Q_i \in \GL(n_i, \Q)$ such that $B_i := Q_i \tilde{B}_i Q_i^{-1} \in \GL(n, \Z)$. Clearly $B_i$ is also semi-simple. Define $Q \in \prod_{\lambda \in \Lambda} \GL(\Sp_\Q(\lambda)) \subset G^\Q(\Gamma)$ as represented by the matrix $\diag(Q_i, \ldots, Q_i)$ with respect to the basis $\lambda_{i1} \cup \ldots \cup \lambda_{ik_i}$ on the subspace $W_i$ for each $i \in \E{d}$. It is then straightforward to check that $Q \left(\prod_{i = 1}^d T(\sigma_i, \tilde{B}_i)\right) Q^{-1} = \prod_{i = 1}^d T(\sigma_i, B_i)$. We have thus proven that $g$ is conjugated to $\prod_{i = 1}^d T(\sigma_i, B_i)$ in $G^\Q(\Gamma)$ with $B_i \in \GL(n_i, \Z)$ semi-simple.
\end{proof}

For any cycle $\sigma = (\lambda_1\ldots \lambda_k) \in S(\Lambda)$ consisting of coherent components of size $n$ and any $n$-tuple $\alpha = (\alpha_1, \ldots, \alpha_n) \in (K^\ast)^n$ we will use the shortened notation
\[T(\sigma, \alpha) = T(\sigma, \diag(\alpha_1, \ldots, \alpha_n)).\]
Let $W$ denote the vector subspace of $\Sp_K(V)$ spanned by the vertices in $\lambda_1 \cup \ldots \cup \lambda_k$. For any non-zero complex number $z = re^{i\theta}$ with $r > 0$ and $0 \leq \theta < 2\pi$, any positive integer $s > 0$ and any $t \in \E{s}$, we write the $t$-th $s$-root of $z$ as
\[R_{st}(z) := \sqrt[s]{r} e^{i \frac{\theta + (t-1)2\pi}{s}}.\]
The eigenvalues and corresponding eigenvectors of $T(\sigma, \alpha)|_W$ are then given by
\begin{equation}
	\label{eq:eigenvectOfT}
	R_{kj}(\alpha_i) \quad \text{and} \quad w(\sigma, \alpha)_{ij} := \sum_{m = 1}^{k} \frac{v_{\lambda_m i}}{R_{kj}(\alpha_i)^{m}},
\end{equation}
respectively, with $i \in \E{n}$ and $j \in \E{k}$.

\begin{lemma}
	\label{lem:ReducedFormOfInducedAutos}
	Let $\varphi$ be an automorphism of $A(\Gamma, c)$ and $\overline{\varphi}$ the induced morphism on $L^\C (\Gamma, c)$. There exists an automorphism $\psi \in \Aut(\overline{\Gamma})$ with a disjoint cycle decomposition $\psi = \sigma_1 \circ \ldots \circ \sigma_d$ (where 1-cycles are not omitted) and for each cycle $\sigma_i$ consisting of coherent components of size $n_i$, an element $\alpha_i = (\alpha_{i1}, \ldots, \alpha_{in_i}) \in (\C^\ast)^{n_i}$ with $\prod_{j = 1}^{n_i} \alpha_{ij} = \pm 1$ such that the Lie algebra automorphisms $\pi_c^{-1}\left( \prod_{i = 1}^d T(\sigma_i, \alpha_i) \right)$ and $\overline{\varphi}$ have the same characteristic polynomial.
\end{lemma}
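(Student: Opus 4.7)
The plan is to reduce $\overline{\varphi}$ to a diagonal-block canonical form by combining Jordan decomposition in the algebraic group $G^K(\Gamma)$ with Lemma~\ref{lem:reducedFormOverQ} and a final complex diagonalization, and then to argue that the characteristic polynomial is insensitive to this entire reduction.

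First, I would restrict attention to the rational Lie algebra $L^\Q(\Gamma,c)$: since $\overline{\varphi}$ is induced from a group automorphism of $A(\Gamma,c)$, it preserves the $\Q$-form, and by Remark~\ref{rem:charPolyIntegral} the image $g := \pi_c(\overline{\varphi}|_{L^\Q(\Gamma,c)}) \in G^\Q(\Gamma)$ is integer-like. The multiplicative Jordan decomposition $g = g_s g_u$ inside the $\Q$-algebraic group $G^\Q(\Gamma)$ keeps both factors in $G^\Q(\Gamma)$ because $\Q$ is perfect, and $g_s$ inherits the integer-like property from $g$ since the two share their characteristic polynomial. Applying Lemma~\ref{lem:reducedFormOverQ} to $g_s$ then supplies an automorphism $\psi \in \Aut(\overline{\Gamma})$ with disjoint cycle decomposition $\psi = \sigma_1 \circ \cdots \circ \sigma_d$ and integral semisimple matrices $B_i \in \GL(n_i,\Z)$ such that $g_s$ is $G^\Q(\Gamma)$-conjugate to $\prod_{i=1}^d T(\sigma_i, B_i)$.

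Second, I would diagonalize each $B_i$ over $\C$. Writing $B_i = Q_i \diag(\alpha_{i1},\ldots,\alpha_{in_i}) Q_i^{-1}$ for some $Q_i \in \GL(n_i,\C)$, the constraint $\prod_j \alpha_{ij} = \pm 1$ follows from $\det B_i = \pm 1$, which in turn comes from the fact that the constant term of the integer-like characteristic polynomial of $B_i$ equals $\pm 1$. A short direct computation on the explicit block form of $T(\sigma_i, B_i)$ shows that conjugating by the element of $\prod_{\lambda \in \Lambda} \GL(\Sp_\C(\lambda)) \subset G^\C(\Gamma)$ which acts on the subspace $\Sp_\C(\lambda_{i1} \cup \cdots \cup \lambda_{ik_i})$ by $\diag(Q_i,\ldots,Q_i)$ produces $T(\sigma_i, \alpha_i)$. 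Chaining the two conjugations, $g_s$ is $G^\C(\Gamma)$-conjugate to $h := \prod_i T(\sigma_i, \alpha_i)$.

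The final step is to transfer this back to characteristic polynomials on the whole of $L^\C(\Gamma,c)$. Set $f := \pi_c^{-1}(h)$. Since $\pi_c$ is a group isomorphism onto $G^\C(\Gamma)$, the element $\pi_c^{-1}(g_s)$ equals the semisimple part $(\overline{\varphi})_s$ of $\overline{\varphi}$ in $\Aut_g(L^\C(\Gamma,c))$, and the $G^\C(\Gamma)$-conjugation lifts to an $\Aut_g(L^\C(\Gamma,c))$-conjugation between $(\overline{\varphi})_s$ and $f$, so these agree in characteristic polynomial on every graded piece. Moreover, $\overline{\varphi}$ and $(\overline{\varphi})_s$ share the characteristic polynomial on each $L_i^\C(\Gamma,c)$, since the unipotent factor $(\overline{\varphi})_u$ preserves each graded piece and restricts there to a unipotent element commuting with $(\overline{\varphi})_s|_{L_i^\C(\Gamma,c)}$. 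The main obstacle I expect is this last step: one must carefully invoke functoriality of Jordan decomposition under the grading-restriction morphisms $\Aut_g(L^\C(\Gamma,c)) \to \GL(L_i^\C(\Gamma,c))$ to conclude that characteristic polynomials on all higher graded pieces are controlled by the semisimple part on $L_1^\C(\Gamma,c)$, whereas the earlier reductions to block form follow essentially routinely from Lemma~\ref{lem:reducedFormOverQ} and complex diagonalization.
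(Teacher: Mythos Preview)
Your proposal is correct and essentially identical to the paper's proof: Jordan--Chevalley decomposition over $\Q$, application of Lemma~\ref{lem:reducedFormOverQ} to the semisimple part, and complex diagonalization of the resulting integral blocks $B_i$. The only cosmetic difference is that the paper takes the Jordan decomposition of $\overline{\varphi}$ directly in $\Aut_g(L^\Q(\Gamma,c))\subset\GL(L^\Q(\Gamma,c))$ rather than of $g=\pi_c(\overline{\varphi})$ in $G^\Q(\Gamma)$, which dissolves your perceived ``main obstacle'': the equality of characteristic polynomials between $\overline{\varphi}$ and $\overline{\varphi}_s$ is then immediate for linear maps on the whole space $L^\Q(\Gamma,c)$, with no need to invoke functoriality on the individual graded pieces.
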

\begin{proof}
	 First we will work over the field $\Q$. With abuse of notation we let $\overline{\varphi}$ also denote the induced automorphism on $L^\Q (\Gamma, c)$. Let $\overline{\varphi} = \overline{\varphi}_s \overline{\varphi}_u$ be the Jordan-Chevalley decomposition of $\overline{\varphi}$ into its semi-simple and unipotent part. Note that $\overline{\varphi}_s$ has the same characteristic polynomial as $\overline{\varphi}$. Since $\Aut_g(L^\Q (\Gamma, c))$ is a linear algebraic group over a perfect field, we know that $\overline{\varphi_s}$ still lies in $\Aut_g(L^\Q (\Gamma, c))$. Since $\pi_c$ is a linear algebraic group isomorphism to $G^\Q(\Gamma)$, we have that $\pi_c(\overline{\varphi}_s) \in G^\Q (\Gamma)$ is the semi-simple part of $\pi_c(\overline{\varphi})$. By Remark \ref{rem:charPolyIntegral}, we know that $\pi_c(\overline{\varphi})$ is integer-like which implies that also $\pi_c(\overline{\varphi}_s)$ is integer-like. Applying Lemma \ref{lem:reducedFormOverQ} to $\pi_c(\overline{\varphi}_s)$, we get an element $h \in G^\Q(\Gamma)$, an automorphism $\psi \in \Aut(\overline{\Gamma})$ with disjoint cycle decomposition $\psi = \sigma_1 \circ \ldots \circ \sigma_d$ and semi-simple matrices $B_i \in \GL(n_i, \Z)$ such that $h\pi_c(\overline{\varphi}_s)h^{-1} = \prod_{i = 1}^d T(\sigma_i, B_i)$. Since each $B_i$ is semi-simple, there exists a matrix $Q_i \in \GL(n_i, \C)$ and a tuple $\alpha_i = (\alpha_{i1}, \ldots, \alpha_{in_i}) \in (\C^\ast)^{n_i}$, such that $Q_i B_i Q_i^{-1} = \diag(\alpha_{i1}, \ldots, \alpha_{in_i})$. Since $\det(B_i) = \pm 1$, it follows that $\prod_{j = 1}^{n_i} \alpha_{ij} = \pm 1$.
\end{proof}

As a consequence of Lemma \ref{lem:reducedFormOverQ}, we can show that the characteristic polynomial of any graded integer-like automorphism of $L^\Q(\Gamma, c)$ can be realised as the characteristic polynomial of an automorphism of the group $A(\Gamma, c)$. First we show some lemma's to construct this automorphism on $A(\Gamma, c)$.

Let $\Gamma = (V, E)$ be a graph and $\Lambda$ the set of coherent components. For any $\lambda \in \Lambda$, we write $\GL(\Z, \lambda)$ for the subgroup of $\GL(\Sp_\C(\lambda))$ of linear maps that are represented by a matrix in $\GL(|\lambda|, \Z)$ with respect to the basis $\lambda$. As before, we identify $\GL(\Z, \lambda)$ with its corresponding subgroup in $\GL(\Sp_K(V))$.
\begin{lemma}
	\label{lem:blockDiagAutomorphisms}
	Let $\Gamma = (V, E)$ be a graph with set of coherent components $\Lambda$ and $K$ a subfield of $\C$. For any $B \in \left(\prod_{\lambda \in \Lambda} \GL(\Z, \lambda) \right) \leq \GL(\Sp_K(V))$, there exists an automorphism $\varphi \in \Aut(A(\Gamma, c))$ such that the induced automorphism $\overline{\varphi} \in \Aut_g(L^K (\Gamma, c))$ satisfies $\pi_c(\overline{\varphi}) = B$.
\end{lemma}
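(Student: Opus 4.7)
The plan is to build an automorphism of $A(\Gamma)$ itself, one coherent component at a time, and then pass it to the quotient $A(\Gamma,c)$ (which is automatic since $\gamma_{c+1}(A(\Gamma))$ is characteristic). Decompose $B = \prod_{\lambda\in\Lambda} B_\lambda$, where $B_\lambda\in\GL(|\lambda|,\Z)$ is the restriction of $B$ to $\Sp_K(\lambda)$. For each $\lambda$ I aim to construct $\tilde\beta_\lambda\in\Aut(A(\Gamma))$ that sends each $v_{\lambda i}$ to a word in $\{v_{\lambda j}\}_j$ with abelianization $\sum_j (B_\lambda)_{ji}\,v_{\lambda j}$ and fixes every vertex outside $\lambda$. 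Since distinct $\tilde\beta_\lambda$ act on disjoint generating sets they commute pairwise, and $\varphi := \prod_{\lambda\in\Lambda}\tilde\beta_\lambda$ will be the desired automorphism.

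To construct $\tilde\beta_\lambda$, I invoke the dichotomy implicit in Remark \ref{rem:quotientgraph}: either $\lambda$ is a loop of $\overline{\Gamma}$, in which case all pairs of distinct vertices in $\lambda$ are adjacent in $\Gamma$, or it is not, in which case no pair of distinct vertices of $\lambda$ is adjacent. In the non-loop case the subgroup $\langle\lambda\rangle\leq A(\Gamma)$ is free abelian of rank $|\lambda|$, and the formula $v_{\lambda i}\mapsto \prod_j v_{\lambda j}^{(B_\lambda)_{ji}}$ yields an automorphism of $\langle\lambda\rangle$ lifting $B_\lambda$. In the loop case $\langle\lambda\rangle$ is free of rank $|\lambda|$, and the classical surjection $\Aut(F_{|\lambda|}) \twoheadrightarrow \GL(|\lambda|,\Z)$ supplies an automorphism of $\langle\lambda\rangle$ lifting $B_\lambda$. (The case $|\lambda|=1$ falls trivially under the non-loop analysis.)

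The main check is that extending this map by the identity on $V\setminus\lambda$ preserves every defining relation $[v,w]=1$ of $A(\Gamma)$. The case $v,w\notin\lambda$ is trivial, and the case $v,w\in\lambda$ only arises when $\lambda$ is not a loop, where both images lie in the abelian subgroup $\langle\lambda\rangle$ and automatically commute. For $v=v_{\lambda i}\in\lambda$ and $w=u\notin\lambda$ with $\{v_{\lambda i},u\}\notin E$, I need $u$ to commute with the word $\tilde\beta_\lambda(v_{\lambda i})\in\langle\lambda\rangle$, for which it suffices that $u$ commutes with every $v_{\lambda j}$; this follows from $v_{\lambda i}\sim v_{\lambda j}$ together with $u\neq v_{\lambda i},v_{\lambda j}$, since then the definition of $\sim$ forces $u\in\Omega'(v_{\lambda i})\Leftrightarrow u\in\Omega'(v_{\lambda j})$. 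Applying the same recipe to $B_\lambda^{-1}$ gives a two-sided inverse of $\tilde\beta_\lambda$, so $\tilde\beta_\lambda\in\Aut(A(\Gamma))$. By construction the induced graded map on $L_1^K(\Gamma,c)=\Sp_K(V)$ equals $B$, which gives $\pi_c(\overline{\varphi})=B$.

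The only subtle step is the loop case, where one must appeal to the surjection $\Aut(F_n)\twoheadrightarrow\GL(n,\Z)$ rather than writing a clean explicit formula; the non-loop case and the verification of relations are immediate from the defining property of the $\sim$-equivalence.
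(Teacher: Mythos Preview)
Your argument is correct, but it takes a more elaborate route than the paper's. The paper writes down the single uniform formula $\varphi(v_{\lambda i}) = \prod_{j} v_{\lambda j}^{\,b^\lambda_{ji}}$ on the free group $F(V)$, checks (exactly as you do, via Remark~\ref{rem:quotientgraph}) that it kills the defining relators of $A(\Gamma)$, and then passes to $A(\Gamma,c)$; invertibility is obtained for free from the standard fact that an endomorphism of a finitely generated nilpotent group which induces an isomorphism on the abelianisation is an automorphism. In particular the paper never needs the loop/non-loop dichotomy nor the surjection $\Aut(F_n)\twoheadrightarrow\GL(n,\Z)$.

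Your approach, by contrast, insists on producing an honest automorphism of $A(\Gamma)$ itself, which forces the case split: in the loop case the naive product formula is \emph{not} an automorphism of the free subgroup $\langle\lambda\rangle$, so you must invoke the Nielsen surjection to find a lift. This buys you a marginally stronger conclusion (the map already lives in $\Aut(A(\Gamma))$), at the cost of an extra ingredient. One small imprecision: in the loop case ``applying the same recipe to $B_\lambda^{-1}$'' should be read as taking the inverse of the chosen lift in $\Aut(F_{|\lambda|})$, not an arbitrary lift of $B_\lambda^{-1}$, since two lifts differ by an IA-automorphism and need not be mutual inverses. With that reading your inverse construction goes through.
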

\begin{proof}
	Recall the ordering on the vertices inside each coherent component $\lambda = \{v_{\lambda 1}, \ldots, v_{\lambda |\lambda|}\}$. We write $B = \prod_{\lambda \in \Lambda} B_\lambda$ with $B_\lambda \in \GL(\Z, \lambda)$. For any $\lambda \in \Lambda$, let $(b^\lambda_{ij})_{1 \leq i,j \leq |\lambda|}$ denote the matrices which represent the maps $B_\lambda$ with respect to the basis $v_{\lambda 1}, \ldots, v_{\lambda |\lambda|}$. Let $F(V)$ be the free group on $V$. By the universal property of free groups, there exists a unique endomorphism $\varphi:F(V) \to F(V)$ which satisfies
	\[ \varphi(v_{\lambda i}) = \prod_{j = 1}^{|\lambda|} v_{\lambda j}^{b_{ji}^{\lambda}} \]
	for any $\lambda \in \Lambda$ and $i \in \E{|\lambda|}$. Since $A(\Gamma)$ is a quotient of $F(V)$, we can compose $\varphi$ with the quotient map to get a morphism $\tilde{\varphi}:F(V) \to A(\Gamma)$. Now take any two distinct vertices $v_{\lambda i}, v_{\mu i'} \in V$ such that $\{v_{\lambda i}, v_{\mu i'} \} \notin E$. By remark \ref{rem:quotientgraph} it follows that for all $1 \leq j \leq |\lambda|, \, 1 \leq j' \leq |\mu|$ also $\{v_{\lambda j}, v_{\mu j'} \} \notin E$. Thus $v_{\lambda j}$ commutes with $v_{\mu j'}$ in $A(\Gamma)$ for any $1 \leq j \leq |\lambda|, \, 1 \leq j' \leq |\mu|$. Using this we find that
	\begin{equation*}
		\tilde{\varphi}([v_{\lambda i}, v_{\mu i'}]) = [\tilde{\varphi}(v_{\lambda i}), \tilde{\varphi}(v_{\mu i'})] = \left[ \prod_{j = 1}^{|\lambda|} v_{\lambda j}^{b_{ji}^{\lambda}} , \prod_{j' = 1}^{|\mu|} v_{\mu j'}^{b_{j'i'}^{\mu}}  \right] = 1.
	\end{equation*}
	Since this holds for arbitrary distinct vertices $v_{\lambda i}, v_{\mu i'} \in V$, we find that $\tilde{\varphi}$ induces an endomorphism on $A(\Gamma)$ and thus also an endomorphism on $A(\Gamma, c)$ which we will call $\varphi$. Clearly, the endomorphism induced by $\varphi$ on the abelianization of $A(\Gamma, c)$ is invertible. Since $A(\Gamma, c)$ is nilpotent, this implies that $\varphi$ is itself an automorphism of $A(\Gamma, c)$. It is straightforward to verify that indeed $\pi_c(\overline{\varphi}) = B$.
\end{proof}

We can generalize the above lemma to the following:

\begin{lemma}
	\label{lem:blockDiagPermAutomorphisms}
	Let $\Gamma = (V, E)$ be a graph and $K$ a subfield of $\C$. Let $\psi \in \Aut(\overline{\Gamma})$ be a quotient graph automorphism with disjoint cycle decomposition $\psi = \sigma_1 \circ \ldots \circ \sigma_d$ (where 1-cycles are not omitted) and for each cycle $\sigma_i$ consisting of coherent components of size $n_i$, take any matrix $B_i$ in $\GL(n_i, \Z)$. There exists an automorphism $\varphi \in \Aut(A(\Gamma, c))$ such that the induced automorphism $\overline{\varphi} \in \Aut(L^K(\Gamma, c))$ satisfies $\pi_c(\overline{\varphi}) = \prod_{i = 1}^{d} T(\sigma_i, B_i)$.
\end{lemma}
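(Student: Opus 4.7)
My plan is to decompose the target element $\prod_{i=1}^d T(\sigma_i, B_i) \in G^K(\Gamma)$ as a product of a ``block-diagonal'' factor, to which Lemma \ref{lem:blockDiagAutomorphisms} applies, and a ``permutation'' factor coming from the section $r:\Aut(\overline{\Gamma}) \to \Aut(\Gamma)$, and then to compose the corresponding automorphisms of $A(\Gamma, c)$.

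First I would record the following pointwise factorization for a single cycle. For $\sigma = (\lambda_1 \ldots \lambda_k) \in S(\Lambda)$ with $|\lambda_m| = n$ and $B \in \GL(n, \Z)$, let $B_{\lambda_1} \in \GL(\Z, \lambda_1) \leq \GL(\Sp_K(V))$ denote the linear map acting as $B$ on $\Sp_K(\lambda_1)$ (with the fixed basis $v_{\lambda_1 1}, \ldots, v_{\lambda_1 n}$) and as the identity on $\Sp_K(V \setminus \lambda_1)$. Then
\[ T(\sigma, B) = B_{\lambda_1} \cdot \overline{P}(\sigma), \]
which follows by a direct verification: $\overline{P}(\sigma)$ sends $v_{\lambda_m j} \mapsto v_{\lambda_{m+1} j}$ cyclically, and post-composition with $B_{\lambda_1}$ only twists those images that land in $\Sp_K(\lambda_1)$, namely the images of $v_{\lambda_k j}$, which matches the matrix definition of $T(\sigma, B)$. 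Because the cycles $\sigma_i$ are pairwise disjoint, the distinguished components $\lambda_{i1}$ are pairwise distinct, each $B_{i, \lambda_{i1}}$ commutes with every $\overline{P}(\sigma_j)$ for $j \neq i$ (since $\overline{P}(\sigma_j)$ fixes $\Sp_K(\lambda_{i1})$ pointwise), and the factors $B_{i, \lambda_{i1}}$ commute pairwise. Combined with $\overline{P}(\psi) = \prod_{i=1}^d \overline{P}(\sigma_i)$, this yields the global factorization
\[ \prod_{i=1}^d T(\sigma_i, B_i) = \left( \prod_{i=1}^d B_{i, \lambda_{i1}} \right) \cdot \overline{P}(\psi). \]

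To conclude, I would apply Lemma \ref{lem:blockDiagAutomorphisms} to the element $\prod_{i=1}^d B_{i, \lambda_{i1}} \in \prod_{\lambda \in \Lambda} \GL(\Z, \lambda)$ to obtain $\varphi_1 \in \Aut(A(\Gamma, c))$ with $\pi_c(\overline{\varphi_1}) = \prod_i B_{i, \lambda_{i1}}$. Separately, let $\varphi_2 \in \Aut(A(\Gamma, c))$ be the automorphism induced by the graph automorphism $r(\psi) \in \Aut(\Gamma)$, which permutes the generators as $v_{\lambda j} \mapsto v_{\psi(\lambda) j}$ and descends to $A(\Gamma, c)$; then $\pi_c(\overline{\varphi_2}) = P(r(\psi)) = \overline{P}(\psi)$. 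Setting $\varphi := \varphi_1 \circ \varphi_2$ and using that both $\pi_c$ and $\varphi \mapsto \overline{\varphi}$ are group homomorphisms yields $\pi_c(\overline{\varphi}) = \prod_{i=1}^d T(\sigma_i, B_i)$, as required. No step is a real obstacle here: the substantive input is Lemma \ref{lem:blockDiagAutomorphisms}, and the rest reduces to the matrix identity $T(\sigma, B) = B_{\lambda_1}\overline{P}(\sigma)$ together with the commutations granted by disjointness of the cycles $\sigma_i$.
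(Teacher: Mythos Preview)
Your proof is correct and follows essentially the same approach as the paper: both factor $\prod_i T(\sigma_i, B_i)$ as a product of the permutation $\overline{P}(\psi)$ (realized by the graph automorphism $r(\psi)$) and a block-diagonal element in $\prod_{\lambda} \GL(\Z, \lambda)$ (handled via Lemma~\ref{lem:blockDiagAutomorphisms}), then compose the resulting automorphisms of $A(\Gamma, c)$. The only cosmetic difference is the order of the two factors --- you write $(\text{block-diagonal})\cdot\overline{P}(\psi)$ and give the explicit identity $T(\sigma,B)=B_{\lambda_1}\overline{P}(\sigma)$, whereas the paper writes $\overline{P}(\psi)\cdot(\text{block-diagonal})$ and simply asserts that $\overline{P}(\psi)^{-1}\prod_i T(\sigma_i,B_i)$ lies in $\prod_\lambda \GL(\lambda,\Z)$ --- so your block-diagonal piece places $B_i$ on the component $\lambda_{i1}$ while the paper's places it on $\lambda_{ik_i}$.
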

\begin{proof}
	Note that any graph automorphism $\theta \in \Aut(\Gamma)$ naturally induces a group automorphism on $A(\Gamma)$ and thus also on $A(\Gamma, c)$. They are part of the so called elementary Nielsen automorphisms in the partially commutative setting, see \cite{serv89-1}. Now take $\theta = r(\psi)$, i.e. $\theta(v_{\lambda i}) = v_{\psi(\lambda) i}$ for any $\lambda \in \Lambda$ and $i \in \E{|\lambda|}$ and let $\varphi_1$ be the automorphism on $A(\Gamma, c)$ that it induces. It is not hard to verify that
	\[B := \pi_c(\overline{\varphi_1})^{-1} \cdot \prod_{i = 1}^{d} T(\sigma_i, B_i)\]
	is an element of $\prod_{\lambda \in \Lambda} \GL(\lambda, \Z)$. Thus by Lemma \ref{lem:blockDiagAutomorphisms}, there exists an automorphism $\varphi_2$ of $A(\Gamma, c)$ such that $\pi_c(\overline{\varphi_2}) = B$. From this it follows that the automorphism $\varphi = \varphi_1 \circ \varphi_2$ satisfies \[\pi_c(\overline{\varphi}) = \pi_c(\overline{\varphi_1}) \circ \pi_c(\overline{\varphi_2}) = \pi_c(\overline{\varphi_1}) \circ B = \prod_{i = 1}^{d} T(\sigma_i, B_i).\]
	This completes the proof.
\end{proof}

We are now ready to prove the corollary which can be seen as a sort of converse to what is stated in Remark \ref{rem:charPolyIntegral}.

\begin{cor}
	Let $\phi$ be any graded integer-like automorphism of $L^\Q(\Gamma, c)$. Then there exists an automorphism $\varphi$ of $A(\Gamma, c)$ such that its induced automorphism $\overline{\varphi}$ on $L^\Q(\Gamma, c)$ has the same characteristic polynomial as $\phi$.
\end{cor}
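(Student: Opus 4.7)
The plan is to mirror the strategy of Lemma \ref{lem:ReducedFormOfInducedAutos}, but running the argument backwards: instead of starting from a group automorphism and producing a normal form, I start from a graded Lie algebra automorphism in normal form and build a corresponding group automorphism with the help of Lemma \ref{lem:blockDiagPermAutomorphisms}.

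First I would pass to the semi-simple part. Writing $\phi = \phi_s \phi_u$ for the Jordan--Chevalley decomposition in the linear algebraic group $\Aut_g(L^\Q(\Gamma,c))$, the factor $\phi_s$ is again graded (since $\Q$ is perfect, the decomposition stays in the same $\Q$-algebraic group) and shares its characteristic polynomial with $\phi$. Then $\pi_c(\phi_s) \in G^\Q(\Gamma)$ is semi-simple, and it is integer-like because $\phi$ is integer-like and the Jordan--Chevalley decomposition preserves the characteristic polynomial.

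Next, I apply Lemma \ref{lem:reducedFormOverQ} to $\pi_c(\phi_s)$. This yields $\psi \in \Aut(\overline{\Gamma})$ with disjoint cycle decomposition $\psi = \sigma_1 \circ \ldots \circ \sigma_d$, integers $n_i$ equal to the common size of the coherent components in $\sigma_i$, and semi-simple matrices $B_i \in \GL(n_i,\Z)$, together with an element $h \in G^\Q(\Gamma)$ such that
\[ h\,\pi_c(\phi_s)\,h^{-1} = \prod_{i=1}^{d} T(\sigma_i, B_i). \]
Now Lemma \ref{lem:blockDiagPermAutomorphisms}, applied to exactly this $\psi$ and these $B_i$, produces an automorphism $\varphi \in \Aut(A(\Gamma,c))$ with $\pi_c(\overline{\varphi}) = \prod_{i=1}^d T(\sigma_i, B_i)$.

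It remains to compare characteristic polynomials. Since $\pi_c$ is a group isomorphism onto $G^\Q(\Gamma)$, the element $\widetilde{h} := \pi_c^{-1}(h) \in \Aut_g(L^\Q(\Gamma,c))$ satisfies $\widetilde{h}\,\phi_s\,\widetilde{h}^{-1} = \overline{\varphi}$ as graded Lie algebra automorphisms. Conjugate automorphisms have the same characteristic polynomial on the whole (finite-dimensional) Lie algebra $L^\Q(\Gamma,c)$, so $\overline{\varphi}$ and $\phi_s$ have identical characteristic polynomials, and $\phi_s$ shares its characteristic polynomial with $\phi$. Hence $\overline{\varphi}$ and $\phi$ have the same characteristic polynomial, which completes the proof.

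The argument is essentially a bookkeeping exercise once the earlier machinery is in place; the only mild subtlety, and what I would be most careful about, is that all of the conjugations take place in the full group $\Aut_g(L^\Q(\Gamma,c))$ (equivalently $G^\Q(\Gamma)$), so that equality of characteristic polynomials on $L_1^\Q(\Gamma,c)$ upgrades automatically to equality of characteristic polynomials on $L^\Q(\Gamma,c)$. No obstacle of substance arises because the hard work — reducing semi-simple integer-like elements of $G^\Q(\Gamma)$ to the form $\prod T(\sigma_i,B_i)$ with integral $B_i$, and lifting such normal forms to genuine group automorphisms of $A(\Gamma,c)$ — has already been done in Lemmas \ref{lem:reducedFormOverQ} and \ref{lem:blockDiagPermAutomorphisms}.
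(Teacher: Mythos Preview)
Your proof is correct and follows essentially the same route as the paper's own argument: pass to the semi-simple part, apply Lemma \ref{lem:reducedFormOverQ} to land on a product $\prod_i T(\sigma_i,B_i)$ with integral $B_i$, and then lift via Lemma \ref{lem:blockDiagPermAutomorphisms}. The only cosmetic difference is that you take the Jordan--Chevalley decomposition of $\phi$ and then apply $\pi_c$, whereas the paper takes the semi-simple part of $\pi_c(\phi)$ directly; since $\pi_c$ is an isomorphism of linear algebraic groups these agree. One small point you (and the paper) leave implicit: to invoke Lemma \ref{lem:reducedFormOverQ} you need $\pi_c(\phi_s)=\phi_s|_{L_1}$ to be integer-like, not just $\phi_s$ itself; this follows from Lemma \ref{lem:integerLikeBlockDiagonal}, since a graded automorphism is block-diagonal with respect to the grading.
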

\begin{proof}
	Let $g$ be the semi-simple part of $\pi_c(\phi)$. Applying Lemma \ref{lem:reducedFormOverQ} to $g$, we find that $g$ is conjugated in $G^\Q(\Gamma)$ to $\prod_{i = 1}^d T(\sigma_i, B_i)$ where $\psi \in \Aut(\overline{\Gamma})$ with disjoint cycle decomposition $\psi = \sigma_1 \circ \ldots \circ \sigma_d$ and $B_i \in \GL(n_i, \Z)$. By Lemma \ref{lem:blockDiagPermAutomorphisms} there exists an automorphism $\varphi$ of $A(\Gamma, c)$ such that $\pi_c(\overline{\varphi}) = \prod_{i = 1}^d T(\sigma_i, B_i)$. We thus see that the semi-simple part of $\pi_c(\phi)$ is conjugated to $\pi_c(\overline{\varphi})$, but since $\pi_c: \Aut_g(L^\Q(\Gamma, c)) \to \GL(\Sp_\Q(V))$ is a linear algebraic group isomorphism the same must hold for $\phi$ and $\varphi$, i.e. the semi-simple part of $\phi$ is conjugated to $\varphi$. This shows that $\phi$ and $\varphi$ have the same characteristic polynomial and completes the proof.
\end{proof}

\section{Non-zero Lie brackets in $L^\C(\Gamma, c)$}
\label{sec:non-zeroLieBrackets}
In this section we prove a result that, for a given subset $W \subset \Sp_\C(V)$ satisfying certain assumptions, allows one to construct folded Lie brackets of elements in $W$ which are non-zero in $L^\C(\Gamma, c)$.

\begin{definition}
	The \textit{bracket words} on a finite set $S$ are defined inductively as follows:
	\begin{enumerate}[label = (\roman*)]
		\item The bracket words of length 1 are the elements of $S$.
		\item For any positive integer $k$, the bracket words of length $k$ are the expressions $[b_1, b_2]$ where $b_1$, $b_2$ are bracket words of lengths $k_1$, $k_2$, respectively such that $k_1 > 0$, $k_2 > 0$ and $k_1 + k_2 = k$.
	\end{enumerate}
	We write $\BW(S)$ for the set of all bracket words on $S$.
\end{definition}

The \textit{weight} of a bracket word $b \in \BW(S)$ is the unique map $e_b:S \to \N$ such that if $b$ is of length one, $e_b(s) = 1$ if $b = s$ and $e_b(s) = 0$ if $b \neq s$ and if $b$ is of length $k > 1$ with $b = [b_1, b_2]$, then $e_b = e_{b_1} + e_{b_2}$. Intuitively, the weight of $b$ counts for each element $s \in S$ how many times it occurs as a letter in the bracket word $b$. For any subset $S \subset L^\C (\Gamma, c)$ one can define a map
\[\phi_S^c:\BW(S) \to L^\C (\Gamma, c)\]
inductively by $\phi_S^c(b) = b$ if $b$ is a bracket word of length $1$ (and thus an element of $S \subset L^\C (\Gamma, c)$) and $\phi_S^c([b_1, b_2]) = [\phi_S^c(b_1), \phi_S^c(b_2)]$ for $b_1, b_2 \in \BW(S)$. For any weight $e:S \to \N$, the support of $e$ is defined by $\supp(e) = \{ s \in S \mid e(s) \neq 0 \}$. Define the set of weights on $V$ with sum at most $c$ as
\begin{equation}
	\label{eq:weightsOfSumAtMostc}
	\mathcal{E}(V, c) = \left\{ e:V \to \N \: \middle| \: \sum_{v \in V} e(v) \leq c \right\}.
\end{equation}

\begin{definition}
	Let $\Gamma = (V, E)$ be a graph. We say a subset $A \subset V$ is \textit{connected} in $\Gamma$ if for any two distinct vertices $v,w \in A$ there exist vertices $v_1,  v_2, \ldots, v_{n-1}, v_n \in A$ with $v_1 = v$ and $v_n = w$ such that $\{v_i, v_{i+1}\} \in E$ for any $i \in \E{n-1}$.
\end{definition}

The next Lemma follows from combining Lemma 3.4. and Theorem 3.6. from \cite{dw23-1}.

\begin{lemma}
	\label{lem:existenceNonZeroBWWithGivenWeight}
	Let $\Gamma = (V, E)$ be a graph $c > 1$ an integer and $e \in \mathcal{E}(V, c)$ a weight. If $|\supp(e)| \geq 2$ and $\supp(e)$ is connected, then there exists a bracket word $b \in \BW(V)$ of weight $e$ such that $\phi_V^c(b)$ is non-zero.
\end{lemma}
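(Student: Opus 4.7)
The plan is induction on the total weight $n := \sum_{v \in V} e(v) \geq 2$. For the base case $n = 2$, connectedness of $\supp(e)$ forces $\supp(e) = \{v, w\}$ with $\{v, w\} \in E$ and $e(v) = e(w) = 1$. Then $b = [v, w]$ has weight $e$, and $\phi_V^c(b) = [v, w]$ is non-zero in $L^\C(\Gamma, c)$: the defining ideal $I(\Gamma)$ is generated by brackets of pairs of \emph{non}-adjacent vertices, and since $c > 1$ we are not yet in the central quotient that kills degree $2$.

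For the inductive step with $n \geq 3$, I would peel off a single vertex. Select $u \in \supp(e)$ and a neighbour $w \in \supp(e)$ of $u$ such that $e' := e - \delta_u$ still has connected support of size at least $2$. This can always be arranged: if $|\supp(e)| \geq 3$, take $u$ to be a leaf of a spanning tree of the subgraph of $\Gamma$ induced on $\supp(e)$ (even with $e(u) = 1$, removing a tree-leaf preserves connectedness and the support still has size $\geq 2$); if $|\supp(e)| = 2$, pick $u$ with $e(u) \geq 2$, which exists since $n \geq 3$, so $\supp(e') = \supp(e)$. Applying the induction hypothesis to $e'$ gives a bracket word $b' \in \BW(V)$ of weight $e'$ with $\phi_V^c(b') \neq 0$, and the natural candidate is $b := [u, b']$, of weight $e$.

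The main obstacle, and where the partially commutative structure genuinely matters, is to guarantee that $\phi_V^c(b) \neq 0$: a priori, bracketing a non-zero element with $u$ could land in $I(\Gamma)$ or in $\gamma_{c+1}(L^\C(\Gamma))$. A naive induction will therefore not suffice; one has to strengthen the inductive claim to construct $b'$ in a \emph{canonical} form, tailored so that $\phi_V^c(b')$ is (or involves with non-zero coefficient) a fixed Hall-type basis element of $L^\C(\Gamma, c)$ associated to a linear ordering on $V$ in which $u$ comes first, with the outermost letter of $b'$ being the neighbour $w$ of $u$. The peeling step then produces a $b$ which is again admissible for the basis, so $\phi_V^c(b)$ is a non-zero basis vector.

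Rather than redevelop this Hall-type basis from scratch, I would quote exactly the two ingredients \cite{dw23-1} provides: Lemma 3.4 there gives the combinatorial building blocks (admissible bracket words whose images are linearly independent in $L^\C(\Gamma, c)$), and Theorem 3.6 assembles them into an inductive construction compatible with prescribing the weight $e$ on any connected support of size $\geq 2$. Combining the two yields precisely the statement of the lemma; the bulk of the work is hidden in the admissibility bookkeeping carried out in those references.
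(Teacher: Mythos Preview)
Your proposal is correct and lands on exactly the same approach as the paper: the lemma is recorded there as a direct consequence of Lemma~3.4 and Theorem~3.6 of \cite{dw23-1}, with no independent argument supplied. Your inductive sketch and the discussion of the Hall-type basis are helpful motivation for why those cited results are the right ingredients, but they go beyond what the paper itself provides, which is simply the citation.
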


 As a vector space, $L^\C (\Gamma, c)$ can be written as a direct sum
\begin{equation}
	\label{eq:vectorSpaceDecompLieAlgGraph}
	L^\C (\Gamma, c) = \bigoplus_{e \in \mathcal{E}(V, c)} \Sp_\C\left( \left\{ \phi_V^c(b) \mid b \in \BW(V) \text{ of weight } e \right\} \right).
\end{equation}
Using this decomposition, we can prove the following generalization of Lemma \ref{lem:existenceNonZeroBWWithGivenWeight}.

\begin{lemma}
	\label{lem:existenceNonZeroBWWithGivenWeightGeneralization}
	Let $\Gamma = (V, E)$ be a graph and $A \subset V$ a connected subset with $|A| \geq 2$. Let $W \subset \Sp_\C(V) \subset L^\C (\Gamma, c)$ be a finite set of vectors such that there exists a surjective map $\kappa:W \to A$ which satisfies
	\begin{equation*}
		\label{eq:conditionsOnKappa}
		\forall w \in W: \: w \in \Sp_\C(\kappa(w) \cup (V \setminus A)) \quad \text{and} \quad w \notin \Sp_\C(V \setminus A).
	\end{equation*}
	For any weight $e \in \mathcal{E}(W, c)$ with $\supp(e) = W$, there exists a bracket word $b \in \BW(W)$ of weight $e$ such that $\phi_W^c(b)$ is non-zero.
\end{lemma}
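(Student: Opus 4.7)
The plan is to reduce this lemma to Lemma \ref{lem:existenceNonZeroBWWithGivenWeight} by passing to an associated weight on $V$, lifting a bracket word in vertices to one in $W$, and then using the weight decomposition in (\ref{eq:vectorSpaceDecompLieAlgGraph}) to isolate a non-zero component.

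First, I would associate to the weight $e$ on $W$ a weight $e':V \to \N$ defined by $e'(v) = \sum_{w \in \kappa^{-1}(v)} e(w)$ for $v \in A$ and $e'(v) = 0$ for $v \in V \setminus A$. Since $\kappa$ is surjective and $\supp(e) = W$, one has $\supp(e') = A$, which is connected and has size at least $2$. Moreover $\sum_{v \in V} e'(v) = \sum_{w \in W} e(w) \leq c$, so $e' \in \mathcal{E}(V, c)$. By Lemma \ref{lem:existenceNonZeroBWWithGivenWeight}, there exists a bracket word $b' \in \BW(V)$ of weight $e'$ with $\phi_V^c(b') \neq 0$.

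Next, I would lift $b'$ to a bracket word $b \in \BW(W)$ of weight $e$ which has the same bracket tree as $b'$. For each $v \in A$, the vertex $v$ appears $e'(v) = \sum_{w \in \kappa^{-1}(v)} e(w)$ times in $b'$, so one can partition the occurrences of $v$ into blocks of sizes $e(w)$ indexed by $w \in \kappa^{-1}(v)$, and then replace each occurrence of $v$ in that block by the corresponding $w$. This yields a bracket word $b \in \BW(W)$ of weight $e$ with the property that if each $w$ appearing in $b$ is substituted back by $\kappa(w)$, one recovers exactly $b'$.

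The final step is to show $\phi_W^c(b) \neq 0$. For each $w \in W$, write $w = c_w \kappa(w) + \sum_{u \in V \setminus A} d_{w,u} u$ in $\Sp_\C(V)$, where $c_w \neq 0$ by the hypothesis $w \notin \Sp_\C(V \setminus A)$. Expanding $\phi_W^c(b)$ by multilinearity of the Lie bracket, $\phi_W^c(b)$ is a sum over choices (one vertex per occurrence of each $w$) of terms of the form $(\text{scalar}) \cdot \phi_V^c(\tilde b)$ for various $\tilde b \in \BW(V)$. I would then use the decomposition (\ref{eq:vectorSpaceDecompLieAlgGraph}) and project onto the weight-$e'$ summand. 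The key observation is that any choice in which some occurrence of $w$ is replaced by a vertex $u \in V \setminus A$ produces a bracket word whose weight has a positive value at $u \notin A = \supp(e')$, hence does not contribute to the $e'$-component. Therefore the weight-$e'$ component of $\phi_W^c(b)$ equals $\prod_{w \in W} c_w^{e(w)} \cdot \phi_V^c(b')$, which is non-zero since every $c_w \neq 0$ and $\phi_V^c(b') \neq 0$.

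The main obstacle is the bookkeeping in the last step: carefully arguing that only the ``all-$\kappa(w)$'' expansion survives in the weight-$e'$ graded piece. This hinges on the second part of the hypothesis on $\kappa$, namely that $w$ has non-zero $\kappa(w)$-coefficient and otherwise only involves vertices outside $A$, so every mixed choice shifts weight mass from $A$ to $V \setminus A$ and lands in a different summand of (\ref{eq:vectorSpaceDecompLieAlgGraph}).
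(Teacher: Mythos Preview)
Your proposal is correct and follows essentially the same argument as the paper: define the pushforward weight $e'$ (the paper's $\tilde e$) on $V$, invoke Lemma~\ref{lem:existenceNonZeroBWWithGivenWeight} to get a non-zero bracket word $b'$, lift it along $\kappa$ to a bracket word $b\in\BW(W)$ of weight $e$, and then expand by multilinearity and use the decomposition~(\ref{eq:vectorSpaceDecompLieAlgGraph}) to see that the $e'$-component of $\phi_W^c(b)$ equals $\bigl(\prod_{w\in W} c_w^{e(w)}\bigr)\phi_V^c(b')\neq 0$. The only cosmetic difference is that the paper packages the lifting via an induced map $\overline{\kappa}:\BW(W)\to\BW(V)$ rather than describing the block replacement explicitly.
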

\begin{proof}
	Note that the map $\kappa:W \to A$ induces a map on bracket words $\overline{\kappa}:\BW(W) \to \BW(V)$ defined inductively by $\overline{\kappa}(w) = \kappa(w)$ if $w$ is a bracket word of length 1 (and thus an element of $W$), and $\overline{\kappa}([b_1, b_2]) = [\overline{\kappa}(b_1), \overline{\kappa}(b_2)]$ for any $b_1, b_2 \in \BW(W)$. From the weight $e:W \to \N$ we define a new weight $\tilde{e} \in \mathcal{E}(V, c)$ by
	\[ \tilde{e}:V \to \N: a \mapsto \begin{cases}
		\sum_{w \in \kappa^{-1}(a)} e(w) &\text{ if } a \in A\\
		0 & \text{ else.}
	\end{cases}  \]
	From the fact that $\supp(e) = W$ and that $\kappa$ is surjective, it follows that $\supp(\tilde{e}) = A$. Using Lemma \ref{lem:existenceNonZeroBWWithGivenWeight}, there exists a bracket word $\tilde{b} \in \BW(V)$ with weight $\tilde{e}$, such that $\phi_V^c(\tilde{b}) \in L^\C (\Gamma, c)$ is non-zero. By the way we constructed $\tilde{e}$ from $e$, it is clear that one can choose a bracket word $b \in \BW(W)$ such that $\overline{\kappa}(b) = \tilde{b}$ and such that $b$ has weight $e$. 
	
	For each $w \in W$, let $f_w:V \to \C$ denote the unique function such that $w = \sum_{v \in V} f_w(v) v$. From the assumption on $\kappa$, it is clear that for any $w \in W$ we have
	\[ w = f_w(\kappa(w)) \kappa(w) + \sum_{v \in V \setminus A} f_w(v) v \]
	with $f_w(\kappa(w)) \neq 0$. Using the linearity of the Lie bracket in $L^\C (\Gamma, c)$, we can thus rewrite the element $\phi_W(b)$ as a sum
	\[ \phi_W^c(b) = \left( \prod_{w \in W} f_w(\kappa(w))^{e(w)} \right) \cdot \phi_V^c(\tilde{b}) + \sum_{d \in D} a_d \cdot \phi_V^c(d) \]
	for some coefficients $a_d \in \C$ and a subset $D \subset \BW(V)$ of bracket words in $V$ each having a weight with support not contained in $A$. In particular, each $d \in D$ has weight not equal to $\tilde{e}$, the weight of $\tilde{b}$. Using the vector space direct sum decomposition of $L^\C (\Gamma, c)$ as given in (\ref{eq:vectorSpaceDecompLieAlgGraph}) and the fact that $f_w(\kappa(w)) \neq 0$ for all $w \in W$ and $\phi_V^c(\tilde{b}) \neq 0$, it follows that $\phi_{W}^c(b)$ is non-zero.
\end{proof}

In section \ref{sec:upperBound} we will apply the above lemma to a subset $W$ of eigenvectors of the automorphism $\pi_c^{-1}(T(\sigma_1, \alpha_1) \cdot \ldots \cdot T(\sigma_d, \alpha_d))$ which are given by the vectors $w(\sigma, \alpha)_{ij}$ from equation (\ref{eq:eigenvectOfT}). This will give us a tool for constructing an eigenvector with eigenvalue 1.

\section{Transposition-free graphs}
\label{sec:transposition-freeGraphs}

In this section we prove Theorem \ref{thm:transpositionFreeIndex2}. This is an improvement to a result in \cite{send21-1} where it is proven (although not stated as a theorem) that the $R_\infty$-nilpotency index of the RAAG's associated to the smaller class of non-empty transvection-free graphs is equal to 3. Note that the author uses the opposite convention to define RAAG's, namely that adjacent vertices commute.

Let $\Gamma = (V, E)$ be a non-empty transposition-free graph. This is equivalent with saying that all its coherent components are singletons and that $\Gamma$ has at least 2 vertices. In this case, the graph $\Gamma$ and its quotient graph $\overline{\Gamma}$ are essentially the same and we get a natural identification $\Aut(\Gamma) \cong \Aut(\overline{\Gamma})$, which we use in what follows.

Applying Lemma \ref{lem:ReducedFormOfInducedAutos} to $\Gamma$ for any $c > 1$, we find that for any automorphism $\varphi$ of $A(\Gamma, c)$, there exists a graph automorphism $\psi \in \Aut(\Gamma) \cong \Aut(\overline{\Gamma})$ with disjoint cycle decomposition
\[ \psi = \sigma_1 \circ \ldots \circ \sigma_d, \quad \quad \quad \sigma_i = (v_{i1} v_{i2} \ldots v_{ik_i})\]
where $v_{ij} \in V$ and there exist $\alpha_1, \ldots, \alpha_d \in \{-1, 1\}$ such that the induced automorphism $\overline{\varphi} \in \Aut_g(L^\C (\Gamma))$ has the same characteristic polynomial as the automorphism $\pi_c^{-1}\left( \prod_{i = 1}^d T(\sigma_i, \alpha_i) \right)$ where the maps $T(\sigma_i, \alpha_i)$ have matrix representation $(\pm 1)$ if $k_i = 1$ and
\[ \begin{pmatrix}
	0 & & & \pm 1 \\
	1 & 0 & &\\
	& \ddots & \ddots & \\
	& & 1 & 0
\end{pmatrix}\]
if $k_i > 1$ with respect to the basis $v_{i1}, v_{i2}, \ldots, v_{ik_i}$. Combining this with Lemma \ref{lem:reidemeisterNumberFinGenNilpGroup}, we find that in order to prove Theorem \ref{thm:transpositionFreeIndex2}, it suffices to show that $\pi_2^{-1}\left( \prod_{i = 1}^d T(\sigma_i, \alpha_i) \right)$ has an eigenvector with eigenvalue 1. In the following cases, we can always write down such an eigenvector.

\begin{enumerate}[label = C\arabic*.]
	\item \label{item:transFreeCase1}There exists an index $i \in \{1, \ldots, d\}$ such that $\alpha_i = 1$. Then it is clear that $v_{i1} + \ldots + v_{ik_i} \in L_1(\Gamma, 2)$ is an eigenvector with eigenvalue 1.
	\item \label{item:transFreeCase2}There exists indices $i \in \E{d},\, s,t \in \E{k_i}$ such that $\alpha_i = -1$ and $\{v_{is}, v_{it}\} \in E$. Depending on whether $k_i$ divides $2(s-t)$ or not, the vector
	\[ \sum_{m = 1}^{k_i/2} \left[\psi^m(v_{is}), \psi^m(v_{it})\right] \quad \text{ or } \quad \sum_{m = 1}^{k_i} \left[\psi^m(v_{is}), \psi^m(v_{it})\right], \]
	respectively, will be an eigenvector with eigenvalue 1 in $L_2^\C(\Gamma, 2)$. The case distinction stems from the fact that an eigenvector needs to be non-zero.
	\item \label{item:transFreeCase3}There exist indices $i,j \in \E{d}, s \in \E{k_i}, t \in \E{k_j}$ such that $\alpha_i = \alpha_j = -1$, $\{v_{is}, v_{jt}\} \in E$ and $\lcm(k_i, k_j)\left( \frac{1}{k_i} + \frac{1}{k_j} \right)$ is even. Then the vector
	\[ \sum_{m = 1}^{\lcm(k_i, k_j)} [\psi^m(v_{is}), \psi^m(v_{jt})] \]
	will be an eigenvector with eigenvalue 1 on $L_2^\C(\Gamma, 2)$.
\end{enumerate}

\noindent In \cite{send21-1}, the author considers some other cases where an eigenvector with eigenvalue one is constructed on $L_3(\Gamma, c)$. In what follows, we will exploit the fact that the graph is assumed to be non-empty transposition-free to show that at least one of the cases \ref{item:transFreeCase1}, \ref{item:transFreeCase2} or \ref{item:transFreeCase3} is always true. This is formulated in Proposition \ref{prop:edgeExistenceBetweenCycles} and thus proves Theorem \ref{thm:transpositionFreeIndex2} from the introduction. 

First, we prove the following lemma which tells us something about the neighbours of vertices lying in the same cycle of an automorphism $\psi \in \Aut(\Gamma)$. Recall the definition of the open neighbourhood $\Omega'(v)$ of a vertex $v \in V$ from \eqref{eq:neighbourhoodVertex}.

\begin{lemma}
	\label{lem:neighboursBetweenCycles}
	Let $\Gamma$ be a graph and $\psi \in \Aut(\Gamma)$ with disjoint cycle decomposition $\psi = \sigma_1 \circ \sigma_2 \circ \ldots \circ \sigma_d$, where $\sigma_i = (v_{i1} \: v_{i2} \ldots v_{ik_i})$. For any $i,j \in \E{d}, \, s,t \in \E{k_i}$ it holds that if $\gcd(k_i, k_j) \mid (s-t)$, then $\Omega'(v_{is}) \cap V_j = \Omega'(v_{it}) \cap V_j$ where $V_j$ denotes the set of vertices in the cycle $\sigma_j$.
\end{lemma}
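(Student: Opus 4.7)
The plan is to exploit the fact that $\psi$ is a graph automorphism, so it intertwines with taking open neighbourhoods, combined with a B\'ezout argument applied to $\gcd(k_i, k_j) \mid (s-t)$.

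First I would observe the two basic structural facts. Since $\psi \in \Aut(\Gamma)$, for every $v \in V$ one has $\Omega'(\psi(v)) = \psi(\Omega'(v))$. Moreover, because the cycles $\sigma_1, \ldots, \sigma_d$ are disjoint, $\psi$ preserves each cycle-set $V_\ell$ setwise, and the restriction of $\psi$ to $V_j$ is precisely the cyclic permutation $\sigma_j$ of order $k_j$. Consequently, for any integer $b$, the map $\psi^{bk_j}$ acts as the identity on $V_j$.

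Next, I would use B\'ezout: the hypothesis $\gcd(k_i, k_j) \mid (s-t)$ is equivalent to the existence of integers $a, b \in \Z$ with $s - t = a k_i + b k_j$. With the convention $\psi^m(v_{i\ell}) = v_{i(\ell + m \bmod k_i)}$, this gives
\[
\psi^{bk_j}(v_{it}) = v_{i(t + bk_j \bmod k_i)} = v_{i(s - ak_i \bmod k_i)} = v_{is}.
\]

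Finally I would combine these. Applying the first observation to $v = v_{it}$ and the $bk_j$-th power of $\psi$,
\[
\Omega'(v_{is}) = \Omega'\bigl(\psi^{bk_j}(v_{it})\bigr) = \psi^{bk_j}\bigl(\Omega'(v_{it})\bigr).
\]
Intersecting both sides with $V_j$, and using that $\psi^{bk_j}$ both preserves $V_j$ and fixes each of its elements, yields
\[
\Omega'(v_{is}) \cap V_j = \psi^{bk_j}\bigl(\Omega'(v_{it}) \cap V_j\bigr) = \Omega'(v_{it}) \cap V_j,
\]
which is the desired equality. There is no real obstacle here; the only thing to be careful about is the indexing convention for the cycle $\sigma_i$ and making sure the B\'ezout combination is written so that $\psi^{bk_j}$ sends $v_{it}$ exactly to $v_{is}$, rather than to some other power along the cycle.
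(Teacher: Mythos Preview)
Your proof is correct and follows essentially the same approach as the paper: both use B\'ezout to produce a power of $\psi$ that fixes $V_j$ pointwise while sending $v_{it}$ to $v_{is}$, and then use that $\psi$ preserves adjacency. The only cosmetic difference is that the paper argues element by element (taking $w \in \Omega'(v_{is}) \cap V_j$ and showing $w \in \Omega'(v_{it})$), whereas you phrase it set-theoretically via $\Omega'(\psi^m(v)) = \psi^m(\Omega'(v))$.
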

\begin{proof}
	Assume $\gcd(k_i, k_j) \mid (s-t)$. It follows by the theorem of Bachet-Bézout, that there exist integers $\alpha, \beta \in \Z$ such that $\alpha k_i + t-s = \beta k_j$. Take an arbitrary vertex $w \in \Omega'(v_{is}) \cap V_j$. By definition we have that $\{v_{is}, w\} \in E$. Since $\psi$ is a graph automorphism we have as well that $\{ \psi^{\alpha k_i + t-s}(v_{is}), \psi^{\beta k_j}(w) \} \in E$. Using the cycle decomposition of $\psi$, we get the equality $\{ \psi^{\alpha k_i + t-s}(v_{is}), \psi^{\beta k_j}(w) \} = \{ v_{it}, w \}$. This shows that $w \in \Omega'(v_{it})$ and thus that $\Omega'(v_{is}) \cap V_j \subseteq \Omega'(v_{it}) \cap V_j$. This finishes the proof since the other inclusion is analogous .
\end{proof}

Using the above lemma we can prove our main structural result on a graph and the cycle decomposition of one of its automorphisms.

\begin{prop}
	\label{prop:edgeExistenceBetweenCycles}
	Let $\Gamma$ be a non-empty transposition-free graph and $\psi \in \Aut(\Gamma)$ with disjoint cycle decomposition $\psi = \sigma_1 \circ \sigma_2 \circ \ldots \circ \sigma_d$, where $\sigma_i = (v_{i1} \: v_{i2} \ldots v_{ik_i})$. Then at least one of the following is true:
	\begin{enumerate}[label = (\roman*)]
		\item \label{item:edgeExistenceBetweenCycles1} $\exists \; i \in \E{d}, \, s, t \in \E{k_i}: \: \{v_{is}, v_{it}\} \in E$.
		\item \label{item:edgeExistenceBetweenCycles2} $\exists \; i,j \in \E{d}, \, s \in \E{k_i}, \, t \in \E{k_j}:\: i \neq j, \, \{v_{is}, v_{jt}\} \in E$ and $\lcm(k_i, k_j)\cdot \left( \frac{1}{k_i} + \frac{1}{k_j} \right)$ is even.
	\end{enumerate}
\end{prop}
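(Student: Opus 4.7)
The plan is to argue by contradiction: assume neither \ref{item:edgeExistenceBetweenCycles1} nor \ref{item:edgeExistenceBetweenCycles2} holds, then produce two distinct vertices in the same cycle whose transposition is a graph automorphism, contradicting that $\Gamma$ is transposition-free.

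First I would translate the parity condition in \ref{item:edgeExistenceBetweenCycles2} into a cleaner arithmetic statement. Writing $g = \gcd(k_i, k_j)$ and $k_i = ga$, $k_j = gb$ with $\gcd(a,b) = 1$, one has $\lcm(k_i,k_j)(1/k_i + 1/k_j) = a + b$. Since $\gcd(a,b)=1$, this sum is even if and only if both $a$ and $b$ are odd, which is equivalent to $v_2(k_i) = v_2(k_j)$, where $v_2$ denotes the $2$-adic valuation. So \ref{item:edgeExistenceBetweenCycles2} asserts the existence of an edge between distinct cycles $\sigma_i, \sigma_j$ satisfying $v_2(k_i) = v_2(k_j)$.

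Now assume \ref{item:edgeExistenceBetweenCycles1} and \ref{item:edgeExistenceBetweenCycles2} both fail. Let $V_j$ be the vertex set of cycle $\sigma_j$, and let $N(i) = \{j \neq i : \exists v \in V_i, w \in V_j, \{v,w\} \in E\}$. The failure of \ref{item:edgeExistenceBetweenCycles1} says there are no intra-cycle edges, and the failure of \ref{item:edgeExistenceBetweenCycles2} says $v_2(k_j) \neq v_2(k_i)$ for every $j \in N(i)$. Because $\Gamma$ is non-empty, the set $I = \{i : N(i) \neq \emptyset\}$ is non-empty; choose $i \in I$ maximizing $v_2(k_i)$. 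For every $j \in N(i)$, both $i$ and $j$ lie in $I$, so $v_2(k_j) \leq v_2(k_i)$, and combined with $v_2(k_j) \neq v_2(k_i)$ this forces $v_2(k_j) < v_2(k_i)$. Hence $v_2(\gcd(k_i, k_j)) = v_2(k_j) < v_2(k_i)$, and consequently the integer
\[ D := \lcm_{j \in N(i)} \gcd(k_i, k_j) \]
is a divisor of $k_i$ with $v_2(D) < v_2(k_i)$, so $D$ is a \emph{proper} divisor of $k_i$. In particular $D \leq k_i/2$, so we may choose distinct $s, t \in \E{k_i}$ with $D \mid (s-t)$ (e.g.\ $s=1$, $t=1+D$).

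For this pair, I claim $\Omega'(v_{is}) = \Omega'(v_{it})$. Indeed, for $j \in N(i)$ we have $\gcd(k_i, k_j) \mid D \mid (s-t)$, so Lemma \ref{lem:neighboursBetweenCycles} gives $\Omega'(v_{is}) \cap V_j = \Omega'(v_{it}) \cap V_j$. For $j \notin N(i) \cup \{i\}$, both intersections are empty by definition of $N(i)$. For $j = i$, both intersections are empty since \ref{item:edgeExistenceBetweenCycles1} fails. This equality of open neighbourhoods implies in particular that $v_{is}$ and $v_{it}$ are non-adjacent, and that the transposition of $v_{is}$ and $v_{it}$ preserves the edge set — so it is a graph automorphism of $\Gamma$. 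This contradicts transposition-freeness and completes the proof. I do not anticipate a genuine obstacle; the only subtle point is the parity reformulation of \ref{item:edgeExistenceBetweenCycles2}, which cleanly pinpoints $v_2$ as the invariant governing the argument.
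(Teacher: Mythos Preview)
Your proof is correct and follows essentially the same approach as the paper: both argue by contradiction, reformulate condition~\ref{item:edgeExistenceBetweenCycles2} via the $2$-adic valuation of the cycle lengths, and then use Lemma~\ref{lem:neighboursBetweenCycles} to exhibit two vertices in one cycle with identical open neighbourhoods. The only cosmetic difference is that you select an index $i$ with maximal $v_2(k_i)$ directly, whereas the paper phrases the same step as an infinite ascending chain of $2$-adic valuations; your formulation is slightly more economical but not substantively different.
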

\begin{proof}
	We will prove this statement by contradiction. Assume both \ref{item:edgeExistenceBetweenCycles1} and \ref{item:edgeExistenceBetweenCycles2} do not hold. Let us define for any $i \in \E{d}$ the set of indices 
	\[\Psi(i) = \{ j \in \E{d} \setminus \{i\} \, \mid \exists s \in \E{k_i}, \, t \in \E{k_j},\, \{v_{is}, v_{jt}\} \in E \}.\]
	From the assumption that \ref{item:edgeExistenceBetweenCycles2} does not hold, it follows immediately that for any $j \in \Psi(i)$ the integer $\lcm(k_i, k_j) \cdot \left( \frac{1}{k_i} + \frac{1}{k_j}\right)$ is odd. We can write each cycle length $k_i$ uniquely as $k_i = 2^{e_i} m_i$ with $e_i \in \N$, $m_i \in \N\setminus \{0\}$ and $2 \nmid m_i$. Now suppose that $j \in \Psi(i)$ and that $e_i = e_j$. We then have that
	\begin{align*}
	\lcm(k_i, k_j) \cdot \left( \frac{1}{k_i} + \frac{1}{k_j}\right) &= \frac{\lcm(k_i, k_j)}{k_i} + \frac{\lcm(k_i, k_j)}{k_j}\\
	&= \frac{2^{e_i} \lcm(m_i, m_j)}{2^{e_i} m_i} + \frac{2^{e_i} \lcm(m_i, m_j)}{2^{e_i} m_j}\\
	&= \frac{\lcm(m_i, m_j)}{m_i} + \frac{\lcm(m_i, m_j)}{m_j}
	\end{align*}
	is even, since both summands $\lcm(m_i, m_j)/m_i$ and $\lcm(m_i, m_j)/m_j$ are odd. Clearly this contradicts the assumption and thus we must conclude that for any $j \in \Psi(i)$, $e_i \neq e_j$.
	
	Take any $i \in \E{d}$ for which $\Psi(i)$ is non-empty and suppose that $e_j < e_i$ for any $j \in \Psi(i)$. Then we get for any $j \in \Psi(i)$ that $\gcd(k_i, k_j) = 2^{e_j} \gcd(m_i, m_j)$ which divides $2^{e_i - 1} m_i = k_i/2$. By Lemma \ref{lem:neighboursBetweenCycles} we thus have for any $j \in \Psi(i)$ that $\Omega'(v_{i1}) \cap V_j = \Omega'(v_{i r_i}) \cap V_j$ where we write $r_i := 1 + k_i/2$. Note that by the definition of $\Psi(i)$, we also have that $\Omega'(v_{i1}) \cap V_j = \emptyset = \Omega'(v_{i r_i}) \cap V_j$ for any $j \in \E{d} \setminus (\Psi(i) \cup \{i\})$. At last, we also find that $\Omega'(v_{i1}) \cap V_i = \emptyset = \Omega'(v_{i r_i}) \cap V_i$ as a consequence of the assumption that condition \ref{item:edgeExistenceBetweenCycles1} does not hold. Since $\{V_i \mid 1 \leq i \leq d\}$ is a partition of $V$, we can combine these equalities to get $\Omega'(v_{i1}) = \Omega'(v_{ir_i})$ which contradicts the fact that $\Gamma$ is transposition-free. Therefore we must have that if $\Psi(i)$ is non-empty, then there exists a $j \in \Psi(i)$ such that $e_j > e_i$.
	
	Since $\Gamma$ is not the empty graph and since we assume that \ref{item:edgeExistenceBetweenCycles1} does not hold, we must have that there exists an $i_0 \in \E{d}$ such that $\Psi(i_0)$ is non-empty. By the above, we know that there is an index $i_1 \in \Psi(i_0)$ such that $e_{i_1} > e_{i_0}$. Note that $\Psi(i_1)$ is also non-empty since $i_0 \in \Psi(i_1)$. We can repeat the argument to find an index $i_2 \in \Psi(i_1)$ such that $e_{i_2} > e_{i_1}$. If we keep repeating this process, we clearly get a contradiction since the set $\{e_i \mid i \in \E{d}\}$ is finite.
\end{proof}

The above proves that if a graph $\Gamma$ is transposition-free and has more than one vertex, then $A(\Gamma)$ has $R_\infty$-nilpotency index 2. The converse does not hold. Indeed, consider the graph on four vertices as drawn below.
\begin{figure}[H]
	\centering
	\begin{tikzpicture}
		\filldraw [black] (0.5,0.5) circle (2.5pt) node[right = 2mm] {$v_2$};
		\filldraw [black] (0.5,-0.5) circle (2.5pt) node[right = 2mm] {$v_3$};
		\filldraw [black] (-0.5,-0.5) circle (2.5pt) node[left = 2mm] {$v_4$};
		\filldraw [black] (-0.5,0.5) circle (2.5pt) node[left = 2mm] {$v_1$};
	
		\draw (-0.5, 0.5) -- (0.5,0.5);
		\draw (0.5, 0.5) -- (0.5,-0.5);
		\draw (-0.5, 0.5) -- (0.5,-0.5);
		\draw (-0.5, 0.5) -- (-0.5,-0.5);
	\end{tikzpicture}
\end{figure}
\noindent This graph is clearly not transposition-free, since the transposition of $v_2$ with $v_3$ is a graph automorphism, but $A(\Gamma, 2)$ does have the $R_\infty$-property. This is not so hard to prove using Lemma \ref{lem:ReducedFormOfInducedAutos}. For an actual proof, we also refer to \cite{dl23-1} (where we note that the author uses the opposite convention to define a RAAG). This raises the following question.

\begin{question}
	For which graphs $\Gamma$ does $A(\Gamma)$ have $R_\infty$-nilpotency index equal to 2?
\end{question}
\section{Bounds on the $R_\infty$-nilpotency index of a RAAG}
In this section we prove Theorem \ref{thm:allRAAGSRinfty}. We divide the proof into two parts. The first part deals with the lower bound $\xi(\Gamma)$ and the second with the upper bound $\Xi(\Gamma)$ as defined by equations (\ref{eq:lowerBound}) and (\ref{eq:upperBound}), respectively. 

\subsection{Lower bound}
\label{sec:lowerBound}

First, we show the existence of finitely many polynomials of arbitrary degree for which certain products of their roots are not equal to one. These polynomials will then be used to construct automorphisms on $A(\Gamma, c)$ with finite reidemeister number. A useful tool to construct these polynomials are the so called Pisot units. A \textit{Pisot number} is a real algebraic integer greater than one such that all its conjugates have absolute value stricly less than one. A \textit{Pisot unit} is a Pisot number which is also an algebraic unit, i.e. an algebraic integer for which the constant term of its minimal polynomial over $\Q$ is equal to $\pm 1$. A proof of the following fact can be found in Lemma 2.7. of \cite{dg14-1} and in Proposition 3.6.(3) of \cite{payn09-1}.

\begin{lemma}
	\label{lem:pisotUnit}
	Let $\alpha_1$ be a Pisot unit with conjugates $\alpha_2, \ldots, \alpha_d$. For any $e = (e_1, \ldots, e_d) \in \Z^d$ it holds that if $\prod_{i = 1}^{d} \alpha_i^{e_i} = 1$ then $e_1 = \ldots = e_d$.
\end{lemma}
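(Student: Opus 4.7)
The plan is to extract the statement from a single absolute-value inequality that exploits the Pisot conditions $\alpha_1>1$, $|\alpha_i|<1$ for $i\ge 2$, together with the unit identity $|\alpha_1\cdots\alpha_d|=1$ that comes for free from $\alpha_1$ being an algebraic unit.

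First I would perform a Galois reduction to bring the coordinate of largest absolute value to position $1$. Assume $e\neq 0$ and set $m:=\max_i|e_i|>0$, achieved at some index $i_0$. Since $\alpha_1,\ldots,\alpha_d$ form a single Galois orbit over $\Q$, I can choose $\sigma\in\Gal(\Qbar/\Q)$ with $\sigma(\alpha_{i_0})=\alpha_1$; applying $\sigma$ to $\prod_i\alpha_i^{e_i}=1$ produces a new relation $\prod_j\alpha_j^{f_j}=1$ in which $(f_j)$ is a permutation of $(e_i)$, so in particular $f_1=e_{i_0}=\pm m$. After possibly replacing $f$ by $-f$ (which still yields a valid relation, since $1^{-1}=1$), I obtain an exponent vector $e'$ with $e_1'=m$ and $|e_i'|\le m$ for all $i$. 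Because the property ``all coordinates equal'' is invariant under permutation and negation, it suffices to prove the lemma for $e'$.

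The analytic heart of the argument is then a one-line squeeze. Taking absolute values in $\prod_i\alpha_i^{e_i'}=1$ and isolating the $\alpha_1$-factor gives
\[
\alpha_1^{m}=\alpha_1^{e_1'}=\prod_{i=2}^d|\alpha_i|^{-e_i'}\le\prod_{i=2}^d|\alpha_i|^{-m}=\Big(\prod_{i=2}^d|\alpha_i|\Big)^{-m}=\alpha_1^{m},
\]
where the bound on each factor uses $|\alpha_i|<1$ together with $e_i'\le m$, and the final equality uses $|\alpha_1\cdots\alpha_d|=1$ combined with $\alpha_1>0$. Equality throughout forces $|\alpha_i|^{-e_i'}=|\alpha_i|^{-m}$ for each $i\ge 2$, and since $|\alpha_i|\neq 1$ this pins down $e_i'=m$; together with $e_1'=m$ all coordinates of $e'$ coincide, which transfers back to all $e_j$ being equal.

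The only delicate step is the initial Galois reduction: one must be careful that the multiset $\{|e_i|\}$ is preserved and that the final conclusion genuinely transfers from $e'$ back to $e$. Once the largest coordinate in absolute value has been moved to position $1$ and made positive, the Pisot and unit hypotheses interact to make the absolute-value estimate sharp in an essentially tautological way, and no deeper input (Baker-type linear forms in logarithms, Dirichlet's unit theorem, etc.) is required.
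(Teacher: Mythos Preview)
The paper does not actually supply its own proof of this lemma; it merely records that a proof can be found in \cite{dg14-1} (Lemma~2.7) and \cite{payn09-1} (Proposition~3.6(3)). So there is no in-paper argument to compare against.

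Your proof is correct and self-contained. The Galois reduction step is sound: the conjugates $\alpha_1,\ldots,\alpha_d$ are the roots of a single irreducible polynomial over $\Q$, hence form one orbit under $\Gal(\Qbar/\Q)$, and applying $\sigma$ to the relation just permutes the exponent vector, so that the conclusion ``all coordinates equal'' transfers back without loss. The absolute-value squeeze is also correct: from $|\alpha_i|<1$ and $e_i'\le m$ one gets $|\alpha_i|^{-e_i'}\le|\alpha_i|^{-m}$, and the unit condition $|\alpha_1\cdots\alpha_d|=1$ together with $\alpha_1>0$ turns the upper bound into the exact value $\alpha_1^{m}$, forcing equality factor by factor. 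This is in fact the standard elementary argument for multiplicative independence of the conjugates of a Pisot unit modulo the diagonal, so your proposal matches what one would find in the cited references.
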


We can now prove the following existence result.

\begin{lemma}
	\label{lem:polynomialsWithoutNonTrivialUnitProductsOfEigenvalues}
	Let $n > 0$, $c > 1$ and $d_1, \ldots, d_n > 0$ be any positive integers. There exist monic irreducible polynomials $p_1(X), \ldots, p_n(X) \in \Z[X]$ of degree $d_1, \ldots, d_n$, respectively, such that if $\alpha_{i 1}, \ldots, \alpha_{i d_i} \in \C$ denote the the zeros of $p_i(X)$, the following are true:
	\begin{enumerate}[label = (\roman*)]
		\item $\forall\,  i \in \E{n}:\: \alpha_{i1} \cdot \ldots \cdot \alpha_{i d_i} = - 1$,
		\item $\forall e = (e_{11}, \ldots, e_{1 d_1}, e_{21}, \ldots , e_{2d_2}, \ldots, e_{n1}, \ldots , e_{nd_n}) \in \left(\N\right)^{d_1 + \ldots + d_n}$ with $\sum_{i = 1}^n \sum_{j = 1}^{d_i} e_{ij} \leq c$:
		\[\prod_{i = 1}^n \prod_{j = 1}^{d_i} \alpha_{ij}^{e_{ij}} = 1 \quad \Rightarrow \quad \left(\,\forall i \in \E{n}: \: e_{i1} = \ldots = e_{id_i}\,\right).\]
	\end{enumerate}
\end{lemma}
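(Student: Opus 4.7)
The plan is to prove a strengthening of the statement in which the hypothesis $\sum e_{ij}\le c$ is dropped, i.e.\ with the conclusion demanded for all $(e_{ij})\in\Z^{d_1+\cdots+d_n}$; this immediately implies the lemma for every $c$. I would proceed by induction on $n$, constructing the $p_i$ one at a time. For each $i$ with $d_i=1$, I take $p_i(X)=X+1$, so $\alpha_{i1}=-1$: condition (i) is immediate and (ii) is vacuous. For $d_i\geq 2$, I take $p_i$ to be the minimal polynomial of a Pisot unit of degree $d_i$ whose constant term is $(-1)^{d_i+1}$, so the product of its roots equals $-1$; Pisot units of each degree with either sign of constant term are classical (e.g.\ generalised Fibonacci-type polynomials for one sign, and appropriate variants for the other).

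The decisive extra requirement at stage $n$ is that the splitting field $K_n$ of $p_n$ be linearly disjoint over $\Q$ from the compositum $L:=K_1\cdots K_{n-1}$ of the previously constructed splitting fields. Given this, any relation $\prod_{i,j}\alpha_{ij}^{e_{ij}}=1$ factorises as $A\cdot B=1$ with $A=\prod_{i<n,\,j}\alpha_{ij}^{e_{ij}}\in L^{\ast}$ and $B=\prod_j\alpha_{nj}^{e_{nj}}\in K_n^{\ast}$. Linear disjointness forces $A=B^{-1}\in L\cap K_n=\Q$, and as products of algebraic units in their respective rings of integers $A,B\in\Q\cap\mathcal{O}_L^{\ast}\subseteq\{\pm 1\}$. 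Squaring the relation gives $A^2=B^2=1$ regardless of the sign (which is harmless because the strengthened statement imposes no bound on the $e_{ij}$); the inductive hypothesis applied to $\prod_{i<n,\,j}\alpha_{ij}^{2e_{ij}}=1$ yields $2e_{i1}=\cdots=2e_{id_i}$, hence equality, for each $i<n$, while Lemma~\ref{lem:pisotUnit} applied to $\prod_j\alpha_{nj}^{2e_{nj}}=1$ (or triviality if $d_n=1$) does the same for $i=n$.

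The main obstacle I anticipate is the number-theoretic step: at each induction stage, producing a Pisot unit of the prescribed degree $d_n$ and prescribed sign of constant term whose splitting field is linearly disjoint from a prescribed finite extension $L$ of $\Q$. My intended route is via Hilbert's irreducibility theorem: one writes down a one-parameter family of monic integer polynomials of degree $d_n$ with constant term $(-1)^{d_n+1}$ whose subleading coefficient grows, so Pisot-ness is forced by the standard dominant-root estimate. Generically such members are irreducible with Galois group $S_{d_n}$, and among the infinitely many distinct splitting fields so obtained only finitely many can fail to be linearly disjoint from a fixed $L$. Writing down a concrete such family and verifying the Pisot inequalities uniformly along it is the technical heart of the argument; once that is done, the inductive skeleton above completes the proof.
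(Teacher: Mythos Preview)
Your approach is genuinely different from the paper's and aims for more: dropping the bound $\sum e_{ij}\le c$ altogether. The inductive skeleton is correct---linear disjointness of $K_n$ and $L$ forces the two factors $A\in L^\ast$ and $B\in K_n^\ast$ into $L\cap K_n=\Q$, hence into $\Z^\ast=\{\pm1\}$; squaring and then invoking the inductive hypothesis on $(2e_{ij})_{i<n}$ together with Lemma~\ref{lem:pisotUnit} on $(2e_{nj})$ finishes. The entire weight of your argument therefore rests on the construction step you yourself flag: for each $d\ge2$ and each fixed number field $L$, a Pisot unit of degree $d$ with product of conjugates $-1$ whose splitting field is linearly disjoint from $L$. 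Your Hilbert-irreducibility sketch is plausible but not routine; beyond forcing Galois group $S_d$ generically, you must still ensure that no subfield of the splitting field (in particular the quadratic field cut out by $A_d$, governed by the discriminant square-class) lands inside $L$, and this needs a further argument you have not supplied.

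The paper sidesteps this completely by \emph{using} the bound $c$ rather than discarding it. It fixes once and for all a single Pisot unit $\beta_{i1}$ of degree $d_i$ with $\prod_j\beta_{ij}=-1$ for each $i$ (from \cite{dg14-1}), and for every exponent tuple $e$ considers the homomorphism $\varphi_e\colon\Z^n\to\C^\ast$, $(k_1,\dots,k_n)\mapsto\prod_{i,j}\beta_{ij}^{k_ie_{ij}}$. If $\ker\varphi_e$ has full rank $n$ then each coordinate axis meets it nontrivially, and Lemma~\ref{lem:pisotUnit} applied coordinatewise forces $e_{i1}=\cdots=e_{id_i}$ for all $i$; hence every ``bad'' $e$ has $\ker\varphi_e$ of rank strictly less than $n$. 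The bound $\sum e_{ij}\le c$ makes the bad tuples finite in number, so one simply picks $m=(m_1,\dots,m_n)\in\Z^n$ with all $m_i$ odd lying outside this finite union of proper-rank subgroups, and sets $p_i(X)=\prod_j(X-\beta_{ij}^{m_i})$. This is entirely elementary---no splitting-field disjointness, no Hilbert irreducibility---at the cost that the $p_i$ depend on $c$, which is harmless for the application. Your route would yield a $c$-independent statement, but the number-theoretic input is substantially heavier and is precisely the part you have left as a sketch.
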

\begin{proof}
	From Lemma 2.7. in \cite{dg14-1}, it follows that for all $i \in \E{n}$, there exists a monic $\Q$-irreducible polynomial $q_i(X) \in \Z[X]$ of degree $d_i$ such that if $\beta_{i1}, \ldots, \beta_{id_i}$ denote its roots, we have $\beta_{i 1} \cdot\ldots \cdot \beta_{i d_i} = -1$ and $\beta_{i 1}$ is a Pisot number. Next, for any $e = (e_{11}, \ldots, e_{1 d_1}, e_{21}, \ldots , e_{2d_2}, \ldots, e_{n1}, \ldots , e_{nd_n}) \in \left(\N\right)^{d_1 + \ldots + d_n}$ define the map
	\[ \varphi_e:\Z^n \to \C^\ast: (k_1, \ldots , k_n) \mapsto \prod_{i = 1}^n \prod_{j = 1}^{d_i} \left(\beta_{ij}^{k_i}\right)^{e_{ij}}, \]
	which is a group morphism between the additive group $\Z^n$ and the multiplicative group $\C^\ast = \C \setminus \{0\}$. The kernel $\ker(\varphi_e)$ is a free abelian subgroup of $\Z^n$, with rank less or equal to $n$. Assume that $\rank(\ker(\varphi_e)) = n$. Take any $i \in \E{n}$. Then there exists a $k_i \in \Z$ such that the element $(0, \ldots, 0, k_i, 0, \ldots, 0)$, where the non-zero element is on the $i$-th entry, lies in $\ker(\varphi_e)$. We thus have that $\displaystyle \prod_{j = 1}^{d_i} \left(\beta_{ij}^{k_i}\right)^{e_{ij}} = 1$. Note that $\beta_{i1}^{k_i}$ is still a Pisot number with conjugates $\beta_{i2}^{k_i}, \ldots, \beta_{id_i}^{k_i}$. By Lemma \ref{lem:pisotUnit} we thus have that $e_{i1} = \ldots = e_{i d_i}$. Since $i$ was chosen arbitrarily, we get $\forall i \in \E{n}: \: e_{i1} = \ldots = e_{id_i}$. This proves that for any $e \in \Z^{d_1 + \ldots + d_n}$ with $\neg(\forall i \in \E{n}: \: e_{i1} = \ldots = e_{id_i})$ the kernel $\ker(\varphi_e)$ has rank strictly less than $n$. As a consequence
	\[ \mathcal{H} = \left\{ \ker(\varphi_e) \: \middle| \: e \in (\N)^{d_1 + \ldots + d_n}: \: \neg(\forall i \in \E{n}: \: e_{i1} = \ldots = e_{id_i}) \: \land \: \sum_{i = 1}^n \sum_{j = 1}^{d_i} e_{ij} \leq c \right\} \]
	is a finite collection of subgroups of $\Z^n$ of rank strictly less than $n$. It follows that we can find an element $m = (m_1, \ldots, m_n) \in \Z^n$ such that $m \notin \bigcup_{H \in \mathcal{H}} H$ and such that each integer $m_i$ is odd. For each $i \in \E{n}$, we then define the polynomial
	\[ p_i(X) = (X - \beta_{i 1}^{m_i}) \cdot \ldots \cdot (X - \beta_{i d_i}^{m_i}). \]
	As one can check, these polynomials satisfy all requirements of the lemma.
\end{proof}

We also need the following result which was proven in Proposition 3.7. from \cite{dw23-1} and tells us what the eigenvalues are of an automorphism $\varphi \in \Aut(L^\C(\Gamma, c))$ which is diagonal on the vertices $V$. Recall the notations $\supp(e)$ and $\mathcal{E}(V, c)$ as introduced in section \ref{sec:non-zeroLieBrackets}.

\begin{lemma}
	\label{lem:eigenvaluesVertexDiagAuto}
	Let $\Gamma$ be a graph and $c > 1$. Let $\varphi \in \Aut(L^\C(\Gamma, c))$ be a vertex-diagonal automorphism, i.e. an automorphism such that there exists a function $\Psi: V \to \C^\ast$ with $\forall v \in V: \varphi(v) = \Psi(v) v$. The set of eigenvalues of $\varphi$ is equal to
	\[ \Psi(V) \cup \left\{ \prod_{v \in V} \Psi(v)^{e(v)}  \: \middle| \: \begin{array}{l}
		e \in \mathcal{E}(V, c), \, \, |\supp(e)| \geq 2,\\
		\supp(e) \text{ is connected in } \Gamma
	\end{array} \right\}.\]  
\end{lemma}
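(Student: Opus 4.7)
The plan is to upgrade the weight-space decomposition (\ref{eq:vectorSpaceDecompLieAlgGraph}) of $L^\C(\Gamma,c)$ into an eigenspace decomposition for $\varphi$, and then decide combinatorially which summands actually contribute eigenvalues.

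First, for any $b \in \BW(V)$ of weight $e$, a straightforward induction on the length of $b$, using that $\varphi$ is a Lie algebra automorphism with $\varphi(v) = \Psi(v)\, v$ on each vertex, gives $\varphi(\phi_V^c(b)) = \mu_e\, \phi_V^c(b)$, where $\mu_e := \prod_{v \in V} \Psi(v)^{e(v)}$. In particular each summand $W_e$ of (\ref{eq:vectorSpaceDecompLieAlgGraph}) lies inside the $\mu_e$-eigenspace, so the set of eigenvalues of $\varphi$ is exactly $\{\mu_e \mid e \in \mathcal{E}(V,c),\, W_e \neq 0\}$. The problem thus reduces to determining for which weights $e$ the space $W_e$ is nonzero.

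Next I would split on $|\supp(e)|$. If $|\supp(e)| = 1$, say $\supp(e) = \{v\}$, the only candidate bracket words have all letters equal to $v$, and an easy induction using $[v,v] = 0$ shows they vanish whenever their length is at least $2$; so $W_e \neq 0$ precisely when $e(v) = 1$, contributing exactly the eigenvalues in $\Psi(V)$. If $|\supp(e)| \geq 2$ and $\supp(e)$ is connected in $\Gamma$, Lemma \ref{lem:existenceNonZeroBWWithGivenWeight} hands me a bracket word of weight $e$ with nonzero image, so $W_e \neq 0$ and $\mu_e$ appears as an eigenvalue.

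The remaining direction is to show that if $\supp(e)$ is disconnected then $W_e = 0$. Writing $\supp(e) = A \sqcup B$ with no edge of $\Gamma$ connecting $A$ and $B$, the defining relations of $L^\C(\Gamma,c)$ make every $a \in A$ commute with every $b \in B$, so the Lie subalgebras they generate inside $L^\C(\Gamma,c)$ commute elementwise. I would then prove by induction on length that every $b \in \BW(V)$ whose letters lie in both $A$ and $B$ satisfies $\phi_V^c(b) = 0$: splitting $b = [b_1, b_2]$, either some $b_i$ still has letters in both parts (apply the inductive hypothesis), or the letters of $b_1$ and $b_2$ lie entirely in $A$ and $B$ respectively, so their evaluations commute and the outer bracket vanishes. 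This last induction is the only delicate step — mainly because one has to handle the subcase where some $b_i$ is a single letter separately from longer words — but it is routine once the case analysis is laid out. Combining the three regimes yields exactly the set claimed in the statement.
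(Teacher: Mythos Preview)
Your argument is correct. Note, however, that the paper does not give its own proof of this lemma: it simply cites Proposition~3.7 of \cite{dw23-1}, so there is no in-paper argument to compare against. Your proof is self-contained, using only the weight decomposition (\ref{eq:vectorSpaceDecompLieAlgGraph}) and Lemma~\ref{lem:existenceNonZeroBWWithGivenWeight}, both already available in the paper.

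The three regimes you identify --- support of size one, connected support of size at least two, disconnected support --- are exactly the right trichotomy, and the induction for the disconnected case goes through cleanly. The dichotomy ``either some $b_i$ has letters in both parts, or $b_1$ and $b_2$ live entirely on opposite sides of the partition'' is genuinely exhaustive once one notes that both $b_i$ living on the \emph{same} side would contradict $b$ having letters in both parts. Your closing remark that the subcase where some $b_i$ is a single letter requires separate handling is actually unnecessary: a single letter automatically has all its letters on one side of the partition, so it is absorbed by the ``opposite sides'' branch, and the induction is uniform in the length of the subwords.
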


We are now ready to prove the lower bound on the $R_\infty$-nilpotency index of $A(\Gamma)$ of which we recall the definition:
\[ 	\xi(\Gamma) = \min \left\{ |\lambda| + |\mu| \: \middle| \: \lambda, \mu \in \Lambda,\, \{\lambda , \mu\} \in \overline{E} \right\}. \]

\begin{theorem}
	Let $\Gamma$ be a non-empty graph. The group $A(\Gamma, \xi(\Gamma) - 1)$ does not have the $R_\infty$-property.
\end{theorem}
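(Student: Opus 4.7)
My plan is to exhibit an explicit automorphism $\varphi \in \Aut(A(\Gamma, \xi(\Gamma)-1))$ whose induced graded automorphism $\overline{\varphi}$ on $L^\C(\Gamma, \xi(\Gamma)-1) \cong L^\C(A(\Gamma, \xi(\Gamma)-1))$ has no eigenvalue equal to $1$, and then conclude via Lemma \ref{lem:reidemeisterNumberFinGenNilpGroup} that $R(\varphi)<\infty$. Set $c = \xi(\Gamma)-1$.

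For each coherent component $\lambda_i$ of size $d_i$, I will take a polynomial $p_i(X) \in \Z[X]$ as produced by Lemma \ref{lem:polynomialsWithoutNonTrivialUnitProductsOfEigenvalues} (with this particular $c$), and let $C_i \in \GL(d_i,\Z)$ be its companion matrix. Then the block-diagonal element $B = \prod_i C_i$ lies in $\prod_{\lambda \in \Lambda} \GL(\Z,\lambda)$, so Lemma \ref{lem:blockDiagAutomorphisms} yields $\varphi \in \Aut(A(\Gamma,c))$ with $\pi_c(\overline{\varphi}) = B$. Since each $p_i$ is $\Q$-irreducible in characteristic zero, its roots $\alpha_{i1},\ldots,\alpha_{id_i}$ are distinct, so $C_i$ is $\C$-diagonalizable. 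Conjugating $B$ by a block-diagonal element of $\prod_\lambda \GL(\Sp_\C(\lambda)) \subset G^\C(\Gamma)$ produces a vertex-diagonal element $\pi_c^{-1}(QBQ^{-1})$ conjugate to $\overline{\varphi}$ in $\Aut_g(L^\C(\Gamma,c))$, hence with the same characteristic polynomial. Applying Lemma \ref{lem:eigenvaluesVertexDiagAuto} with $\Psi(v_{\lambda_i j}) = \alpha_{ij}$, the eigenvalues of $\overline{\varphi}$ consist of the $\alpha_{ij}$ together with all products $\prod_{i,j}\alpha_{ij}^{e_{ij}}$ indexed by weights $e \in \mathcal{E}(V,c)$ satisfying $|\supp(e)| \geq 2$ and $\supp(e)$ connected in $\Gamma$.

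The main work, and the step where care is needed, is ruling out the value $1$ among these eigenvalues. The individual $\alpha_{ij}$ are handled at once by condition (ii) of Lemma \ref{lem:polynomialsWithoutNonTrivialUnitProductsOfEigenvalues} applied to the weight supported on a single vertex. For a product-eigenvalue $\prod \alpha_{ij}^{e_{ij}} = 1$, condition (ii) forces the exponents to be constant on each component, so $\supp(e) = \bigcup_{i \in I} \lambda_i$ is a union of \emph{full} coherent components with positive common multiplicities $f_i$ on each $\lambda_i$, $i \in I$. The proof then splits on $|I|$. If $|I|=1$ with $\supp(e)=\lambda_i$, connectivity together with $|\lambda_i|\geq 2$ forces a loop at $\lambda_i$ in $\overline{\Gamma}$ via Remark \ref{rem:quotientgraph}, so $\xi(\Gamma) \leq 2|\lambda_i|$; the product equals $(-1)^{f_i}$, forcing $f_i$ even and $\sum_v e(v) \geq 2|\lambda_i| \geq \xi(\Gamma) > c$, contradicting $e \in \mathcal{E}(V,c)$. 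If $|I|\geq 2$, connectivity of $\supp(e)$ in $\Gamma$ forces a pair of distinct $i_1,i_2 \in I$ with $\{\lambda_{i_1},\lambda_{i_2}\} \in \overline{E}$, giving $\sum_v e(v) \geq |\lambda_{i_1}|+|\lambda_{i_2}| \geq \xi(\Gamma) > c$, another contradiction. These two inequalities are exactly what the definition of $\xi(\Gamma)$ was engineered to yield, so the only subtlety to verify carefully is that connectivity of $\supp(e)$ in $\Gamma$ really does supply the adjacent pair of components in $\overline{\Gamma}$ in each case.
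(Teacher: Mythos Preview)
Your proof is correct and follows essentially the same route as the paper: choose block-diagonal companion matrices for the polynomials supplied by Lemma~\ref{lem:polynomialsWithoutNonTrivialUnitProductsOfEigenvalues}, lift via Lemma~\ref{lem:blockDiagAutomorphisms}, conjugate to a vertex-diagonal automorphism, read off the eigenvalues with Lemma~\ref{lem:eigenvaluesVertexDiagAuto}, and rule out the value~$1$ by the same two-case argument on whether $\supp(e)$ is a single coherent component (loop in $\overline{\Gamma}$, parity forces $f_i$ even) or several (connectivity yields an edge in $\overline{E}$). Your justification that each $\alpha_{ij}\neq 1$ via condition~(ii) on a singleton weight, and your observation that irreducibility over $\Q$ guarantees distinct roots (hence diagonalizability of the companion matrix), are in fact slightly more explicit than the paper's treatment.
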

\begin{proof}
	Let $\overline{\Gamma} = (\Lambda, \overline{E}, \Phi)$ be the quotient graph of $\Gamma$ and write $c = \xi(\Gamma) -1$. Using Lemma \ref{lem:polynomialsWithoutNonTrivialUnitProductsOfEigenvalues}, we can find for any $\lambda \in \Lambda$ a monic irreducible polynomial $p_\lambda(X) \in \Z[X]$ of degree $|\lambda|$ with eigenvalues $\alpha_{\lambda 1}, \ldots, \alpha_{\lambda |\lambda|}$ such that $\alpha_{\lambda 1} \cdot \ldots \cdot \alpha_{\lambda |\lambda|} = -1$ and 
	\[\forall e_{\lambda i} \in \N \text{ with } \sum_{\lambda \in \lambda} \sum_{j = 1}^{|\lambda|} e_{\lambda j} \leq c: \quad \prod_{\lambda \in \Lambda} \prod_{j = 1}^{|\lambda|} \alpha_{\lambda j}^{e_{\lambda j}} = 1 \quad \Rightarrow \quad \left(\forall \, \lambda \in \Lambda: e_{\lambda 1} = \ldots = e_{\lambda |\lambda|}\right).\]
	Recall that we fixed an ordering of the vertices in each coherent component $\lambda = \{v_{\lambda 1}, \ldots , v_{\lambda |\lambda|}\}$. For each $\lambda \in \Lambda$, let $B_\lambda \in \GL(\Z, \lambda)$ be the linear map such that its matrix representation w.r.t. the basis $v_{\lambda 1}, \ldots , v_{\lambda |\lambda|}$ is given by the companion matrix of $p_\lambda(X)$. This gives an element $B = \prod_{\lambda \in \Lambda} B_\lambda$ in $\prod_{\lambda \in \Lambda} \GL(\Z, \lambda)$. By Lemma \ref{lem:blockDiagAutomorphisms}, there exists an automorphism $\varphi \in \Aut(A(\Gamma, c))$ such that, if $\overline{\varphi} \in \Aut_g(L^\C (\Gamma, c))$ denotes the induced automorphism on the Lie algebra, $\pi_c(\overline{\varphi}) = B$. Equivalently we have $\pi_c^{-1}(B) = \overline{\varphi}$. Note that, by Lemma \ref{lem:reidemeisterNumberFinGenNilpGroup}, if $\pi_c^{-1}(B) = \overline{\varphi}$ does not have an eigenvalue equal to 1, the Reidemeister number of $\varphi$ is not equal to $\infty$ and thus $A(\Gamma, c)$ does not have the $R_\infty$-property. Let us prove by contradiction that this is indeed the case.
	
	Assume $\pi_c^{-1}(B)$ has an eigenvalue equal to one. Note that companion matrices are diagonalizable and thus that for each $\lambda \in \Lambda$ there exists a $Q_\lambda \in \GL(\Sp_\C(\lambda))$ such that the matrix representation of $Q_\lambda B_\lambda Q_\lambda^{-1}$ w.r.t. the basis $v_{\lambda 1}, \ldots , v_{\lambda |\lambda|}$ is given by $\diag(\alpha_{\lambda 1}, \ldots , \alpha_{\lambda |\lambda|})$. It follows that 
	\[D := \pi_c^{-1}\left(\left(\prod_{\lambda \in \Lambda} Q_\lambda \right) B \left(\prod_{\lambda \in \Lambda} Q_\lambda \right)^{-1}\right)\]
	is an automorphism of $L^\C (\Gamma, c)$ which is diagonal on the vertices and which has the same eigenvalues as $\pi_c^{-1}(B)$. We thus have that $D$ has an eigenvalue equal to one. Note that this eigenvalue can not occur on $\Sp_\C(V)$ since $\alpha_{\lambda j} \neq 1$ for all $\lambda \in \Lambda$ and $j \in \E{|\lambda|}$. By Lemma \ref{lem:eigenvaluesVertexDiagAuto} above, there must therefore exist $e_{\lambda j} \in \N$ such that \[s := \sum_{\lambda \in \Lambda} \sum_{j = 1}^{|\lambda|} e_{\lambda j} \leq c,\quad \quad \prod_{\lambda \in \Lambda} \prod_{j = 1}^{|\lambda|} \alpha_{\lambda j}^{e_{\lambda j}} = 1\]
	and the set $A := \{ v_{\lambda j} \mid \lambda \in \Lambda, 1 \leq j \leq |\lambda|, e_{\lambda j} \neq 0 \}$ has size greater or equal to two and is connected in $\Gamma$. From the properties of the polynomials $p_\lambda(X)$, we have for each $\lambda \in \Lambda$ that $e_{\lambda 1} = \ldots = e_{\lambda |\lambda|}$. As a consequence $A$ is equal to a union of coherent components. This gives two cases:
	\begin{itemize}
		\item If there is only one coherent component in this union, say $A = \lambda \in \Lambda$, then it follows from $A$ being connected, that $\{\lambda\} \in \overline{E}$. From the definition of $\xi(\Gamma)$ we thus have that $\xi(\Gamma) \leq 2|\lambda|$. We must also have that $e_{\lambda 1} = \ldots = e_{\lambda |\lambda|}$ are even integers since $\alpha_{\lambda 1} \cdot \ldots \cdot \alpha_{\lambda |\lambda|} = -1$. We thus have that $2|\lambda| \leq s$ which implies $\xi(\Gamma) \leq 2|\lambda| \leq s$. This gives a contradiction with $s \leq c = \xi(\Gamma) - 1$.
		\item If, on the other hand, $A$ contains more than one coherent component, then there must exist $\lambda, \mu \subset A$ with $\lambda \neq \mu$ and $\{\lambda, \mu\} \in \overline{E}$ since $A$ is connected. We thus have that $\xi(\Gamma) \leq |\lambda| + |\mu| \leq s$, which again contradicts the fact that $s \leq c = \xi(\Gamma) - 1$. 
	\end{itemize}
 	Thus, we must conclude that $\pi_c^{-1}(B) = \overline{\varphi}$ does not have an eigenvalue equal to one.
\end{proof}

\subsection{Upper bound}
\label{sec:upperBound}
For the upper bound (see (\ref{eq:upperBound}) for the definition), one has to show that if $c = \Xi(\Gamma)$ then for each automorphism $\varphi \in \Aut(A(\Gamma, c))$, the induced automorphism $\overline{\varphi} \in \Aut_g(L^\C (\Gamma, c))$ has an eigenvalue one. We will construct such an eigenvalue 1 as a product of eigenvalues of $\overline{\varphi}$ on $L_1^\C(\Gamma, c)$. If one wants the corresponding eigenvector in $L_i^\C(\Gamma, c)$ with $i$ as small as possible, one needs to solve the following problem: given two positive integers $k,l$ and elements $\alpha, \beta \in \{-1, 1\}$, how can we obtain $1$ as a product of $k$-roots of $\alpha$ and $l$-roots of $\beta$ with the least factors as possible, where we do need to take at least one $k$-root of $\alpha$ and one $l$-root of $\beta$.

The answer, of course, depends on the integers $k,l,\alpha, \beta$, but as it turns out, one always needs either 2 or 3 factors. In case $\alpha = \beta= 1$, the problem is trivial. Since 1 is a $k$-root of $\alpha$ and 1 is an $l$-root of $\beta$, one needs only two factors: $1 \cdot 1 = 1$. The cases $\alpha = \beta = -1$ and $\alpha = -1, \beta = 1$ are less trivial and are treated in the following two lemmas.

Recall that we defined the roots of unity
\[R_{st}(z) := \sqrt[s]{r} e^{i \frac{\theta + (t-1)2\pi}{s}}\]
for any non-zero complex number $z = re^{i\theta}$ with $r > 0$ and $0 \leq \theta < 2\pi$ and any positive integers $s > 0$ and and $t \in \E{s}$.

\begin{lemma}
	\label{lem:prodRootsEqual1SameSign}
	Take any positive integers $k, l > 0$ and write $M = \lcm(k,l)$. Then the following are true:
	\begin{enumerate}[label = (\roman*)]
		\item \label{item:prodRootsEq1Case0} There exist integers $s \in \E{k}$ and $t \in \E{l}$ such that
		\[ R_{ks}(1) \cdot R_{lt}(1) = 1. \]
		\item \label{item:prodRootsEq1Case1} If $M \cdot \left( \frac{1}{k} + \frac{1}{l} \right)$ is even, then there exist integers $s \in \E{k}$ and $t \in \E{l}$ such that
		\[ R_{ks}(-1) \cdot R_{lt}(-1) = 1.\]
		\item \label{item:prodRootsEq1Case2} If $M/k$ is odd and $M/l$ is even, then there exist integers $s, r \in \E{k}$ and $t \in \E{l}$ such that
		\[ R_{ks}(-1) \cdot R_{kr}(-1) \cdot R_{lt}(-1) = 1.\]
	\end{enumerate}
\end{lemma}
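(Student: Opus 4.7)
The plan is to express every root of unity in the statement in terms of a single primitive root and reduce each identity to a linear congruence that one then solves explicitly. From the definition, $R_{km}(1) = e^{2\pi i(m-1)/k}$ and $R_{km}(-1) = e^{i\pi(2m-1)/k}$, so part (i) is immediate: take $s = t = 1$, whence $R_{k1}(1) = R_{l1}(1) = 1$.

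For (ii) and (iii) I would set $g = \gcd(k, l)$ and write $k = gk'$, $l = gl'$ with $\gcd(k',l') = 1$, so that $M = gk'l'$, $M/k = l'$ and $M/l = k'$. All the roots of unity in question lie in $\mu_{2M}$: fixing $\zeta = e^{i\pi/M}$ one has $R_{ks}(-1) = \zeta^{(2s-1)l'}$ and $R_{lt}(-1) = \zeta^{(2t-1)k'}$. For part (ii), the identity $R_{ks}(-1) R_{lt}(-1) = 1$ then translates into $(2s-1)l' + (2t-1)k' \equiv 0 \pmod{2M}$. Reducing modulo $k'$ and modulo $l'$ and using coprimality forces $k' \mid (2s-1)$ and $l' \mid (2t-1)$; writing $2s-1 = k'u$ and $2t-1 = l'v$ collapses the congruence to $u + v \equiv 0 \pmod{2g}$. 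Since $(2s-1)$ and $(2t-1)$ are odd, while the hypothesis $k' + l' = M/k + M/l$ even combined with $\gcd(k',l')=1$ forces both $k'$ and $l'$ to be odd, $u$ and $v$ must also be odd. The choice $u = 1$, $v = 2g-1$ then yields $s = (k'+1)/2$ and $t = (l'(2g-1)+1)/2$, which one checks lie in the required ranges.

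For (iii), the analogous translation is $2(s+r-1)l' + (2t-1)k' \equiv 0 \pmod{2M}$. Setting $m = s+r-1$, divisibility mod $k'$ forces $k' \mid 2m$; since $k'$ is even by hypothesis, writing $k' = 2k''$ turns this into $k'' \mid m$. Divisibility mod $l'$ still gives $l' \mid (2t-1)$, and since $l'$ is odd by hypothesis the solutions have $2t-1 = l'v$ with $v$ odd. The condition reduces to $w + v \equiv 0 \pmod{2g}$, where $m = k''w$; taking $w = 1$ and $v = 2g-1$ gives the explicit choice $s = 1$, $r = k''$, $t = (l'(2g-1)+1)/2$.

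The main obstacle is purely arithmetic bookkeeping rather than anything conceptual: one must verify that the chosen values of $s$, $t$, and $r$ are positive integers lying in $\{1, \ldots, k\}$ or $\{1, \ldots, l\}$, that the relevant numerators (such as $l'(2g-1)+1$) are even so the divisions yield integers, and that $k'' = k'/2$ is well-defined — all of which follow immediately from the parity hypotheses together with $k' \geq 2$ in case (iii).
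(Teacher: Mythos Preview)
Your proof is correct. Both you and the paper reduce each identity to a congruence modulo $2M$ in the exponent of the primitive root $e^{i\pi/M}$: for (ii) this is $(2s-1)l' + (2t-1)k' \equiv 0 \pmod{2M}$, and for (iii) it is $2(s+r-1)l' + (2t-1)k' \equiv 0 \pmod{2M}$. The difference is in how the congruence is solved. The paper observes that $M/k$ and $M/l$ are coprime and invokes B\'ezout in one line: since the parity hypothesis makes the constant term even, the equation $2(al' + bk') \equiv -(\text{constant})\pmod{2M}$ is solvable for $a,b\in\Z$, and one then shifts $s$ by multiples of $k$ and $t$ by multiples of $l$ (each shift changing the exponent by $2M$) to land in the required ranges. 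You instead derive the necessary divisibility conditions $k'\mid(2s-1)$, $l'\mid(2t-1)$ (resp.\ $k''\mid m$, $l'\mid(2t-1)$), substitute, and reduce to $u+v\equiv 0\pmod{2g}$, which you solve by the explicit choice $u=1$, $v=2g-1$. Your route yields closed-form values for $s,t,r$ at the cost of the range and parity checks you mention (all of which go through as you indicate); the paper's route is shorter but non-constructive. Either way the argument is the same in substance.
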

\begin{proof}
	Statement \ref{item:prodRootsEq1Case0} is trivial, namely take $s = t = 1$. For \ref{item:prodRootsEq1Case1}, note that $M/k$ and $M/l$ are coprime and thus by the Theorem of Bachet-B\'ezout, there exist integers $s, t \in \Z$ such that
	\[ \frac{M}{k} + \frac{M}{l} + 2 \left( (s-1) \frac{M}{k} + (t-1) \frac{M}{l}\right) \equiv 0 \mod 2M, \]
	where we used the assumption that $M/k + M/l$ is even. Moreover, since the equation is modulo $2M$, it is clear that we can take $s, t$ such that $s \in \E{k}$ and $t \in \E{l}$. Dividing the left-hand side by $M$, we thus find that $\frac{1}{k} + \frac{1}{l} + \frac{2(s-1)}{k} + \frac{2(t-1)}{l} = \frac{2s-1}{k} + \frac{2t-1}{l}$ is an even integer. From this it follows that
	\[ R_{ks}(-1) \cdot R_{lt}(-1) = e^{i\left( \frac{\pi}{k} + (s-1)\frac{2\pi}{k} \right)} e^{i\left( \frac{\pi}{l} + (t-1)\frac{2\pi}{l} \right)} = e^{i \pi \left(\frac{2s-1}{k} + \frac{2t-1}{l} \right)} = 1.\]
	For \ref{item:prodRootsEq1Case2}, note that $M/k$ and $M/l$ are coprime and thus by the Theorem of Bachet-B\'ezout, there exist integers $s,r, t \in \Z$ such that
	\[ 2\frac{M}{k} + \frac{M}{l} + 2 \left( (s+r-2) \frac{M}{k} + (t-1)\frac{M}{l}\right) \equiv 0 \mod 2M, \]
	where we used the assumption that $M/l$ is even. Moreover, since the equation is modulo $2M$, it is clear that we can take $s, r, t$ such that $s, r \in \E{k}$ and $t \in \E{l}$. Dividing the left-hand side by $M$, we thus find that $\frac{2}{k} + \frac{1}{l} + \frac{2(s + r - 2)}{k} + \frac{2(t - 1)}{l} = \frac{2s + 2r - 2}{k} + \frac{2t - 1}{l}$ is an even integer. From this it follows that
	\[ R_{ks}(-1) \cdot R_{kr}(-1) \cdot R_{lt}(-1) = e^{i\left( \frac{\pi}{k} + s\frac{2\pi}{k} \right)} e^{i\left( \frac{\pi}{k} + r\frac{2\pi}{k} \right)} e^{i\left( \frac{\pi}{l} + t\frac{2\pi}{l} \right)} = e^{i \pi \left(\frac{2s+2r-2}{k} + \frac{2t-1}{l} \right)} = 1.\]
\end{proof}

\begin{lemma}
	\label{lem:prodRootsEqual1DiffSign}
	Take any positive integers $k, l > 0$ and write $M = \lcm(k,l)$. Then the following are true:
	\begin{enumerate}[label = (\roman*)]
		\item \label{item:prodRootsEq1Case3} If $M/k$ is even, then then there exist integers $s \in \E{k}$ and $t \in \E{l}$ such that
		\[ R_{ks}(-1) \cdot R_{lt}(1) = 1.\]
		\item \label{item:prodRootsEq1Case4} If $M/k$ is odd, then then there exist integers $s, r \in \E{k}$ and $t \in \E{l}$ such that
		\[ R_{ks}(-1) \cdot R_{kr}(-1) \cdot R_{lt}(1) = 1.\]
	\end{enumerate}
\end{lemma}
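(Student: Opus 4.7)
The plan is to mimic the strategy of the preceding Lemma \ref{lem:prodRootsEqual1SameSign}: rewrite each product of roots of unity as a single exponential, translate the equality with $1$ into an integer congruence modulo $2M$, and solve it using elementary parity considerations plus Bachet-B\'ezout when needed.

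Concretely, I would first compute
\[ R_{ks}(-1)\cdot R_{lt}(1) = \exp\!\Big(i\pi\Big[\tfrac{2s-1}{k} + \tfrac{2(t-1)}{l}\Big]\Big), \]
so that the desired equality becomes $(2s-1)\tfrac{M}{k} + 2(t-1)\tfrac{M}{l} \equiv 0 \pmod{2M}$. For part (i), the hypothesis that $M/k$ is even, combined with $\gcd(M/k, M/l) = 1$, forces $M/l$ to be odd, so $s := (M/l + 1)/2$ is a positive integer and $M/(2k)$ is an integer. Choosing $t \in \E{l}$ with $t \equiv 1 - M/(2k) \pmod{l}$, a direct calculation shows that
\[ (2s-1)\tfrac{M}{k} + 2(t-1)\tfrac{M}{l} = \tfrac{M}{l}\cdot\tfrac{M}{k} + \big({-}\tfrac{M}{k} + 2l\kappa\big)\tfrac{M}{l} = 2\kappa M \]
for some integer $\kappa$. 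To close the argument I still need to check $s \in \E{k}$, which reduces to $M/l \leq 2k-1$ and follows from $M/l = k/\gcd(k,l) \leq k$.

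For part (ii), the analogous reduction produces the congruence
\[ (2s + 2r - 2)\tfrac{M}{k} + 2(t-1)\tfrac{M}{l} \equiv 0 \pmod{2M}. \]
Here I would simply take $s = 1$, $r = k$, $t = 1$, so that the left-hand side collapses to $\tfrac{M}{k} + (2k-1)\tfrac{M}{k} + 0 = 2M$. This works regardless of the parity of $M/k$; the parity hypothesis in (ii) is included only because case (i) already covers the situation $M/k$ even more economically, with just two factors rather than three.

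The main obstacle is purely bookkeeping in part (i), where one must verify both that the chosen $s$ actually lies in $\E{k}$ and that reducing $t$ modulo $l$ preserves the congruence modulo $2M$ (which it does, since adding $l$ to $t$ changes the left-hand side by $2l \cdot M/l = 2M$). There is no new conceptual difficulty beyond what already appears in the proof of Lemma \ref{lem:prodRootsEqual1SameSign}; the only trick is exploiting that $\gcd(M/k, M/l) = 1$ pins down the parity of $M/l$ in case (i).
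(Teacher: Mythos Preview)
Your argument is correct but takes a more explicit route than the paper. In both parts the paper reduces to the same congruence modulo $2M$ that you derive, then invokes Bachet--B\'ezout abstractly: since $\gcd(M/k,M/l)=1$, every residue modulo $M$ is an integer combination of $M/k$ and $M/l$, and the parity hypothesis guarantees the target is even so the congruence modulo $2M$ is solvable; one then reduces $s$ modulo $k$ and $t$ modulo $l$ to land in the required ranges. You instead write down concrete solutions: for (i) the pair $s=(M/l+1)/2$, $t\equiv 1-M/(2k)\pmod l$ makes the two terms cancel exactly, and for (ii) the triple $(s,r,t)=(1,k,1)$ gives left-hand side $2M$ on the nose. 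Your observation that this triple works for \emph{any} $k,l$---so that the hypothesis ``$M/k$ odd'' in (ii) is merely the complement of (i) rather than something used in the proof---is a point the paper leaves implicit. The paper's approach has the virtue of uniformity with Lemma~\ref{lem:prodRootsEqual1SameSign}; yours is shorter in part (ii) and avoids the existential appeal to B\'ezout altogether.
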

\begin{proof}
	For \ref{item:prodRootsEq1Case3}, note that $M/k$ and $M/l$ are coprime and thus by the Theorem of Bachet-B\'ezout, there exist integers $s,r, t \in \Z$ such that
	\[ \frac{M}{k} + 2 \left( (s-1) \frac{M}{k} + (t-1) \frac{M}{l}\right) \equiv 0 \mod 2M, \]
	where we used the assumption that $M/k$ is even. Moreover, since the equation is modulo $2M$, it is clear that we can take $s, t$ such that $s \in \E{k}$ and $t \in \E{l}$. Dividing the left-hand side by $M$, we thus find that $\frac{1}{k} + \frac{2(s-1)}{k} + \frac{2(t-1)}{l} = \frac{2s-1}{k} + \frac{2t-2}{l}$ is an even integer. From this it follows that
	\[ R_{ks}(-1) \cdot R_{lt}(1) = e^{i\left( \frac{\pi}{k} + (s-1)\frac{2\pi}{k} \right)} e^{i (t-1)\frac{2\pi}{l}} = e^{i\pi\left( \frac{2s-1}{k} + \frac{2t-2}{l} \right)} = 1.\]
	For \ref{item:prodRootsEq1Case4}, note that $M/k$ and $M/l$ are coprime and thus by the Theorem of Bachet-B\'ezout, there exist integers $s,r,t \in \Z$ such that
	\[ 2\frac{M}{k} + 2 \left( (s+r-2) \frac{M}{k} + (t-1) \frac{M}{l}\right) \equiv 0 \mod 2M. \]
	Moreover, since the equation is modulo $2M$, it is clear that we can take $s, r, t$ such that $s, r \in \E{k}$ and $t \in \E{l}$. Dividing the left-hand side by $M$, we thus find that $\frac{2}{k} + \frac{2(s + r - 2)}{k} + \frac{2(t-1)}{l} = \frac{2s - 2r - 2}{k} + \frac{2t - 2}{l}$ is an even integer. From this it follows that
	\[ R_{ks}(-1) \cdot R_{kr}(-1) \cdot R_{lt}(1) = e^{i\left( \frac{\pi}{k} + (s-1)\frac{2\pi}{k} \right)} e^{i\left( \frac{\pi}{k} + (r-1)\frac{2\pi}{k} \right)} e^{i (t-1)\frac{2\pi}{l}} = e^{i\pi\left( \frac{2s - 2r - 2}{k} + \frac{2t - 2}{l} \right)} = 1.\]
\end{proof}

In combination with the two preceding lemma's, the next lemma gives us an answer to a more general problem: given integers $k, l, n, m > 0$ and complex numbers $\alpha_1, \ldots, \alpha_n, \beta_1, \ldots, \beta_m \in \C^\ast$ with $\displaystyle \prod_{i = 1}^n \alpha_i = \pm 1$ and $\displaystyle \prod_{i = 1}^m \beta_i = \pm 1$, how can we obtain 1 as a product of $k$-roots of the numbers $\alpha_i$ and $l$-roots of the numbers $\beta_i$, with as little factors as possible, but with at least one $k$-root of $\alpha_i$ for any $i \in \E{n}$ and at least one $l$-root of $\beta_i$ for any $i \in \E{m}$.

\begin{lemma}
	\label{lem:rootsOfProdIsProdOfRoots}
	Let $n > 0$, $k > 0$ be positive integers and $\alpha_1,\ldots, \alpha_n \in \C \setminus \{0\}$. For any integer $s \in \E{k}$, there exist integers $s_1, \ldots, s_n \in \E{k}$ such that
	\[ R_{ks}\left( \prod_{i = 1}^n \alpha_i \right) = \prod_{i = 1}^n R_{ks_i}(\alpha_i). \]
\end{lemma}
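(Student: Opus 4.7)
The proof is essentially a direct argument by comparing moduli and arguments on both sides.

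My plan is to write $\alpha_i = r_i e^{i\theta_i}$ with $r_i > 0$ and $0 \le \theta_i < 2\pi$, and let $\prod_{i=1}^n \alpha_i = R e^{i\Theta}$ with $R > 0$ and $0 \le \Theta < 2\pi$. Then $R = \prod r_i$, and since the arguments agree modulo $2\pi$, there is a (unique) integer $N$ with $\sum_{i=1}^n \theta_i = \Theta + 2\pi N$.

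Next I would compute both sides of the desired equality. The left-hand side equals
\[
R_{ks}\!\left(\prod_{i=1}^n \alpha_i\right) = R^{1/k}\, \exp\!\left( i\,\frac{\Theta + (s-1)2\pi}{k} \right),
\]
while any candidate right-hand side equals
\[
\prod_{i=1}^n R_{ks_i}(\alpha_i) = \left(\prod_{i=1}^n r_i^{1/k}\right) \exp\!\left( i\,\frac{\sum_{i=1}^n \theta_i + 2\pi\sum_{i=1}^n (s_i - 1)}{k} \right).
\]
The moduli automatically agree. Substituting $\sum \theta_i = \Theta + 2\pi N$, equality of arguments amounts to the congruence
\[
N + \sum_{i=1}^n (s_i - 1) \equiv s - 1 \pmod{k},
\]
equivalently $\sum_{i=1}^n s_i \equiv s - 1 + n - N \pmod{k}$.

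The remaining step is trivial: for any target residue $c \pmod k$, I can set $s_2 = s_3 = \ldots = s_n = 1$ and then choose $s_1 \in \E{k}$ to be the unique representative of $c - (n-1) \pmod k$ inside $\{1, \ldots, k\}$. This yields integers $s_1, \ldots, s_n \in \E{k}$ satisfying the congruence, and hence the desired identity. There is no serious obstacle here; the whole lemma is just bookkeeping about arguments modulo $2\pi k$, reduced to a one-dimensional congruence that is solvable by construction.
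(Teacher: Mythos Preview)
Your proof is correct. Both you and the paper ultimately reduce the problem to the freedom in choosing $s_1$, but the presentations differ. You compute explicitly in polar form and reduce equality of the two sides to a congruence $\sum_i s_i \equiv s-1+n-N \pmod{k}$, then solve it constructively by fixing $s_2=\dots=s_n=1$ and picking $s_1$. The paper instead argues non-constructively: it observes that every product $\prod_i R_{ks_i}(\alpha_i)$ is a $k$-th root of $\prod_i \alpha_i$ (since its $k$-th power equals $\prod_i \alpha_i$), and that varying $s_1$ alone already yields $k$ distinct such products; hence the set of products is exactly the full set of $k$-th roots, so in particular contains $R_{ks}(\prod_i \alpha_i)$. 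The paper's route is a bit slicker in that it avoids tracking arguments and the carry integer $N$ altogether, while your route has the minor advantage of producing explicit values for the $s_i$.
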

\begin{proof}
	Note that all elements in the set
	\[ S = \left\{ \prod_{i = 1}^n R_{k s_i}(\alpha_i) \:\middle| \: s_1, \ldots, s_n \in \E{k}  \right\} \]
	are indeed $k$-roots of $\prod_{i = 1}^n \alpha_i$ since
	\[ \left( \prod_{i = 1}^n R_{k s_i}(\alpha_i) \right)^k = \prod_{i = 1}^n R_{k s_i}(\alpha_i)^k = \prod_{i = 1}^n \alpha_i. \]
	By only varying $s_1$ in $\E{k}$, it is easily verified that $S$ must count at least $k$ elements. We can thus conclude that $S$ is exactly equal to the set of $k$-roots of $\prod_{i = 1}^n \alpha_i$. From this observation, the lemma follows immediately. 
\end{proof}

We are now ready to prove the upper bound on the $R_\infty$-nilpotency index of $A(\Gamma)$ of which we recall the definition:
\begin{equation*}
	\Xi(\Gamma) = \min \left\{ c(\lambda, \mu) \: \middle| \: \lambda, \mu \in \Lambda, \, \{\lambda, \mu \} \in \overline{E} \right\}
\end{equation*}
where for any $\lambda, \mu \in \Lambda$:
\begin{equation*}
	c(\lambda, \mu) = \begin{cases}
		\max\{ 2|\lambda| + |\mu|, |\lambda| + 2|\mu| \} \quad & \text{if } \lambda \neq \mu\\
		2|\lambda| & \text{if } \lambda = \mu.
	\end{cases}
\end{equation*}

\begin{theorem}
	Let $\Gamma = (V, E)$ be a non-empty graph. The group $A(\Gamma, \Xi(\Gamma))$ has the $R_\infty$-property.
\end{theorem}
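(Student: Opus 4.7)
By Lemma \ref{lem:reidemeisterNumberFinGenNilpGroup}, it suffices to show that for every $\varphi \in \Aut(A(\Gamma, \Xi(\Gamma)))$, the induced graded automorphism $\overline{\varphi}$ on $L^\C(\Gamma, \Xi(\Gamma))$ has $1$ as an eigenvalue. Set $c = \Xi(\Gamma)$. Lemma \ref{lem:ReducedFormOfInducedAutos} lets me replace $\overline{\varphi}$ by the normalized operator $\pi_c^{-1}\!\left(\prod_{i=1}^d T(\sigma_i, \alpha_i)\right)$ (which shares its characteristic polynomial), where $\psi = \sigma_1 \circ \cdots \circ \sigma_d \in \Aut(\overline{\Gamma})$ and each tuple $\alpha_i$ satisfies $\prod_j \alpha_{ij} = \pm 1$. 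I then fix an edge $\{\lambda, \mu\} \in \overline{E}$ attaining the minimum $c(\lambda, \mu) = \Xi(\Gamma)$; let $\sigma, \tau$ be the $\psi$-cycles of $\lambda, \mu$ (of lengths $k, l$ and component sizes $n = |\lambda|$, $m = |\mu|$), and write $\alpha = \prod_s \alpha_{\sigma, s}$, $\beta = \prod_t \alpha_{\tau, t}$, both in $\{\pm 1\}$.

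\textbf{Case A: $\lambda \neq \mu$ in different $\psi$-cycles.} With uniform weights $e$ on the $n$ eigenvectors $w(\sigma, \alpha)_{s, j_s}$ and $f$ on the $m$ eigenvectors $w(\tau, \beta)_{t, q_t}$, Lemma \ref{lem:rootsOfProdIsProdOfRoots} reduces the existence of $j_s, q_t$ with $\prod_s R_{kj_s}(\alpha_s)^e \prod_t R_{lq_t}(\beta_t)^f = 1$ to the single condition $\alpha^{el} \beta^{fk} = 1$. A short case analysis in $(\alpha, \beta) \in \{\pm 1\}^2$---essentially the content of Lemmas \ref{lem:prodRootsEqual1SameSign} and \ref{lem:prodRootsEqual1DiffSign}---produces $(e, f) \in \{(1,1), (2,1), (1,2)\}$ meeting this, with $ne + mf \leq \max\{2n + m, n + 2m\} = c$. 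The support of each $\sigma$-eigenvector lies in the $\sigma$-cycle and is thus disjoint from $\mu$, and vice versa, so Lemma \ref{lem:existenceNonZeroBWWithGivenWeightGeneralization} applies with $A = \lambda \cup \mu$ (connected because $\{\lambda,\mu\} \in \overline{E}$), $\kappa(w(\sigma,\alpha)_{s, j_s}) = v_{\lambda s}$ and $\kappa(w(\tau,\beta)_{t, q_t}) = v_{\mu t}$, producing a non-zero level-$(ne+mf)$ eigenvector of eigenvalue $1$.

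\textbf{Case $\lambda = \mu$.} Remark \ref{rem:quotientgraph} forces $\lambda$ to be a clique, so $A = \lambda$ is connected. Using weight $e = 1$ if $\alpha = 1$ and $e = 2$ if $\alpha = -1$, a choice of $j_s$'s from Lemma \ref{lem:rootsOfProdIsProdOfRoots} gives $\prod_s R_{kj_s}(\alpha_s)^e = 1$ with total weight $ne \leq 2n = c$. Taking $W = \{w(\sigma,\alpha)_{s, j_s}\}_s$ and $\kappa(w(\sigma,\alpha)_{s, j_s}) = v_{\lambda s}$, the support of each eigenvector meets $A$ only in $v_{\lambda s}$ (the different $s$-fibres being pairwise disjoint), so Lemma \ref{lem:existenceNonZeroBWWithGivenWeightGeneralization} applies.

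\textbf{Main obstacle: $\lambda \neq \mu$ in the same $\psi$-cycle.} Here $\sigma = \tau$, so $k = l$, $\mu = \psi^d(\lambda)$ for some $d \in \{1, \ldots, k-1\}$, and each $w(\sigma, \alpha)_{s, j}$ has support meeting $\lambda \cup \mu$ in the two vertices $\{v_{\lambda s}, v_{\mu s}\}$, breaking the support hypothesis of Lemma \ref{lem:existenceNonZeroBWWithGivenWeightGeneralization} for the natural choice $A = \lambda \cup \mu$. For $n \geq 2$ I will use the asymmetric connected set $A = \{v_{\lambda, 1}, \ldots, v_{\lambda, n-1}, v_{\mu, n}\}$: it is connected because each $v_{\lambda, s}$ is adjacent to $v_{\mu, n}$, and because the $s$-fibres are pairwise disjoint, $w(\sigma, \alpha)_{s, j_s}$ meets $A$ in a single vertex---$v_{\lambda, s}$ for $s < n$ and $v_{\mu, n}$ for $s = n$; the weight budget is $ne \leq 2n < 3n = c$. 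The genuinely exceptional sub-case $n = 1$ requires a direct computation: for $\alpha = -1$, Lemma \ref{lem:prodRootsEqual1SameSign}(i) applied with $k = l$ yields $j, j'$ such that $R_{kj}(-1) R_{kj'}(-1) = 1$, and one verifies that $[w(\sigma,\alpha)_{1, j}, w(\sigma,\alpha)_{1, j'}]$ is a non-zero level-$2$ eigenvector of eigenvalue $1$, with the boundary case $d = k/2$ handled instead by the level-$2$ eigenvector $\sum_{a=1}^{k/2} [v_{\lambda^{(a)}}, v_{\lambda^{(a+k/2)}}]$.
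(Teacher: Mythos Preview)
Your overall architecture matches the paper's: reduce via Lemmas~\ref{lem:reidemeisterNumberFinGenNilpGroup} and~\ref{lem:ReducedFormOfInducedAutos}, fix an edge $\{\lambda,\mu\}\in\overline{E}$, and manufacture an eigenvalue-$1$ eigenvector as a bracket word in the $w(\sigma,\alpha)_{s,j}$ via Lemma~\ref{lem:existenceNonZeroBWWithGivenWeightGeneralization}. Your asymmetric choice of $A$ in the same-cycle case is essentially the paper's trick $A=(\lambda\setminus\{v_{\lambda 1}\})\cup\{v_{\mu 1}\}$. However, there is a genuine gap in the eigenvalue bookkeeping.

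In Case~A you claim that the existence of $j_s,q_t$ with $\prod_s R_{kj_s}(\alpha_s)^e\prod_t R_{lq_t}(\beta_t)^f=1$ reduces to the single condition $\alpha^{el}\beta^{fk}=1$. This is only necessary, not sufficient. By Lemma~\ref{lem:rootsOfProdIsProdOfRoots} the left-hand side ranges over $\rho^e\eta^f$ with $\rho^k=\alpha$, $\eta^l=\beta$, and the set $\{\rho^e:\rho^k=\alpha\}$ is in general a \emph{proper} subset of the $k$-th roots of $\alpha^e$. Concretely, take $k=4$, $l=1$, $\alpha=\beta=-1$. Your criterion selects $(e,f)=(2,1)$ since $(-1)^{2\cdot 1}(-1)^{1\cdot 4}=1$, but every $4$-th root $\rho$ of $-1$ has $\rho^2\in\{i,-i\}$, while $\eta=-1$ is forced, so $\rho^2\eta\in\{-i,i\}$ is never $1$. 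The same defect propagates to your $\lambda=\mu$ case and to the $n\geq 2$ same-cycle case: for $\alpha=-1$ and $k$ even there is no $k$-th root of $-1$ whose square is $\pm 1$, so ``uniform weight $e=2$'' cannot produce eigenvalue $1$.

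The paper circumvents this by allowing two \emph{independent} root choices on the doubled side: rather than a single eigenvector $w(\sigma,\alpha)_{s,j_s}$ with weight $2$, it takes two eigenvectors $w(\sigma,\alpha)_{s,s_i}$ and $w(\sigma,\alpha)_{s,r_i}$ (coinciding or not), each with weight $1$. One then needs $\rho_1\rho_2\eta=1$ with $\rho_1,\rho_2$ arbitrary $k$-th roots of $\alpha$, which is precisely what Lemmas~\ref{lem:prodRootsEqual1SameSign}\ref{item:prodRootsEq1Case2} and~\ref{lem:prodRootsEqual1DiffSign}\ref{item:prodRootsEq1Case4} supply. The set $W$ then has up to $2n+m$ elements, $\kappa$ sends both $s$-indexed vectors to $v_{\lambda s}$, and Lemma~\ref{lem:existenceNonZeroBWWithGivenWeightGeneralization} applies with total weight still $2n+m\leq c(\lambda,\mu)$. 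The fix in your same-cycle cases is identical: replace ``one root with weight $2$'' by ``two roots with weight $1$ each'' throughout.
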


\begin{proof}
	Let $\overline{\Gamma} = (\Lambda, \overline{E}, \Phi)$ be the quotient graph of $\Gamma$. Take $\lambda, \mu \in \Lambda$ arbitrarily such that $\{\lambda, \mu\} \in \overline{E}$. It suffices to prove that $A(\Gamma, c)$ has the $R_\infty$-property with $c = c(\lambda, \mu)$ (since we can then take the minimum of $c(\lambda, \mu)$ for all $\{\lambda, \mu\} \in \overline{E}$ in the end).

	Take an arbitrary automorphism $\varphi \in \Aut(A(\Gamma, c))$. By Lemma \ref{lem:reidemeisterNumberFinGenNilpGroup} it satisfies to prove that the induced automorphism $\overline{\varphi} \in \Aut_g(L^\C (\Gamma, c))$ has an eigenvalue 1. Using Lemma \ref{lem:ReducedFormOfInducedAutos}, it is sufficient to prove that for any automorphism $\psi \in \Aut(\overline{\Gamma})$ with disjoint cycle decomposition $\psi = \sigma_1 \circ \ldots \circ \sigma_d$ and any tuples $\alpha_i = (\alpha_{i1}, \ldots, \alpha_{i n_i}) \in \C^{n_i}$ where $n_i$ is the size of the coherent components in the cycle $\sigma_i$ and $\alpha_{i1} \cdot \ldots \cdot \alpha_{in_i} = \pm 1$, it holds that the automorphism $\pi_c^{-1}(T(\sigma_1, \alpha_1) \cdot \ldots \cdot T(\sigma_d, \alpha_d))$ has an eigenvalue 1.
	
	Note that every coherent component lies in a unique cycle from the decomposition of $\psi$. Let $p,q \in \E{d}$ be the indices such that $c_p$ and $c_q$ denote these cycles containing $\lambda$ and $\mu$, respectively. For sake of notation, we will write $\sigma := \sigma_p$, $\tau := \sigma_q$, $n = |\lambda|$, $m = |\mu|$, $\alpha = (\alpha_1, \ldots, \alpha_n) := \alpha_p$ and $\beta = (\beta_1, \ldots, \beta_m) := \alpha_q$ and let $k$, $l$ denote the orders of the cycles $\sigma$ and $\tau$, respectively. We divide the proof into several cases:
	\begin{itemize}
		\item \underline{$\sigma = \tau$}
		\begin{figure}[H]
			\centering
			\picOne
		\end{figure}

		\noindent Note that in this case also $n = m$, $\alpha = \beta$ and $k = l$. 
		\begin{itemize}
			\item First consider the case where $n = m = 1$. Then we must have that $\lambda \neq \mu$, $c(\lambda, \mu) = 3$ and $T(\sigma, \alpha) = T(\sigma, (\pm 1))$. If $\alpha = 1$, then there exists an eigenvector with eigenvalue 1 in $L^\C_1(\Gamma, c)$ by the same argument as in \ref{item:transFreeCase1} from section \ref{sec:transposition-freeGraphs}. If, on the other hand $\alpha = -1$, then there exists an eigenvector with eigenvalue 1 in $L^\C_2(\Gamma, c)$ by the same argument as in \ref{item:transFreeCase2} from section \ref{sec:transposition-freeGraphs}.

			\item Next, consider the case where $n = m > 1$. Using that $\prod_{i = 1}^n \alpha_i = \pm 1$, we can combine Lemma \ref{lem:prodRootsEqual1SameSign} \ref{item:prodRootsEq1Case0} \ref{item:prodRootsEq1Case1} (with $k = l$) and \ref{lem:rootsOfProdIsProdOfRoots} to get the existence of indices $s_1, \ldots, s_n, t_1, \ldots, t_n \in \E{k}$ such that 
			\[ \left( \prod_{i = 1}^n R_{k s_i}(\alpha_i) \right) \left( \prod_{i = 1}^n R_{k t_i}(\alpha_i) \right) = 1.\]
			Next, consider the set $A := (\lambda \setminus \{v_{\lambda 1}\}) \cup \{ v_{\mu 1} \}$ for which it is clear that $|A| \geq 2$. From the fact that $\{\lambda, \mu\} \in \overline{E}$ we have that $\{v, w\} \in E$ for any $v \in \lambda, w \in \mu$, hence $A$ is connected. Define
			\[W = \{ w(\sigma, \alpha)_{i s_i}, \, w(\sigma, \alpha)_{i t_i} \mid 1 \leq i \leq n \}\]
			where we remind the reader of the definition of $w(\sigma, \alpha)_{ij} \in \Sp_\C(V)$ in \eqref{eq:eigenvectOfT}. Define a map
			\[\kappa: W \to A: w(\sigma, \alpha)_{ij} \mapsto \begin{cases}
				v_{\mu i} &\text{ if } $i = 1$\\
				v_{\lambda i} &\text{ else.} 
			\end{cases}\]
			It is clear that $A$, $W$ and $\kappa$ satisfy the assumptions of Lemma \ref{lem:existenceNonZeroBWWithGivenWeightGeneralization}. Define the weight
			\[ e: W \to \N: w(\sigma, \alpha)_{ij} \mapsto \begin{cases}
				1 &\text{ if } s_i \neq t_i\\
				2 &\text{ else,}
			\end{cases}.\]
			for which it holds that
			\[\sum_{w \in W} e(w) = 2n \leq c(\lambda, \mu) = c.\]
			By consequence we can apply Lemma \ref{lem:existenceNonZeroBWWithGivenWeightGeneralization} to $A$, $W$ and $\kappa$ for the weight $e$ and get a bracket word $b \in \BW(W)$ of weight $e$ such that $\phi_W^c(b) \in L^\C (\Gamma, c)$ is non-zero. Since each element of $W$ is an eigenvector of the automorphism $\pi_c^{-1}(T(\sigma_1, \alpha_1) \cdot \ldots \cdot T(\sigma_d, \alpha_d))$, it follows that $\phi_W^c(b)$ is still an eigenvector. By construction, it is clear that the eigenvalue of $\phi_W^c(b)$ is equal to $1$.
		\end{itemize}
		
		\item \underline{$\sigma \neq \tau$.}
		\begin{figure}[H]
			\centering
			\picTwo
		\end{figure}
		\noindent Note that since $\{\lambda, \mu\} \in \overline{E}$, the set $A := \lambda \cup \mu$ is connected and that since $\lambda \neq \mu$ we have $|A| \geq 2$. By combining Lemmas \ref{lem:prodRootsEqual1SameSign}, \ref{lem:prodRootsEqual1DiffSign} and \ref{lem:rootsOfProdIsProdOfRoots}, we know one of following three cases is true, depending on the integers $k$ and $l$ and the signs of $\prod_{i = 1}^n \alpha_i$ and $\prod_{i = 1}^m \beta_i$. This dependence is summarized in Table \ref{tab:proofUpperBoundCases} below.
		\begin{enumerate}[label = P\arabic*.]
			\item \label{item:proofUpperBoundCase1} There exist $s_1, \ldots, s_n \in \E{k}, t_1, \ldots, t_m \in \E{l}$ such that
			\[ \left( \prod_{i = 1}^n R_{k s_i}(\alpha_i) \right) \left( \prod_{i = 1}^m R_{l t_i}(\beta_i) \right) = 1. \]
			In this case, define $W := \{ w(\sigma, \alpha)_{i s_i} \mid 1 \leq i \leq n \} \cup \left\{ w(\tau, \beta)_{i t_i} \mid 1 \leq i \leq m \right\}$ and $\kappa: W \to A: w(\sigma, \alpha)_{ij} \mapsto v_{\lambda i}, \, w(\tau, \beta)_{ij} \mapsto v_{\mu i}$. It is clear that $A$, $W$ and $\kappa$ satisfy the assumptions of Lemma \ref{lem:existenceNonZeroBWWithGivenWeightGeneralization}. Define the constant weight $e:W \to \N: w \mapsto 1$ for which we have \[\sum_{w \in W} e(w) = n + m \leq c(\lambda, \mu) = c.\]
			By consequence we can apply Lemma \ref{lem:existenceNonZeroBWWithGivenWeightGeneralization} to $A$, $W$ and $\kappa$ for the weight $e$ and get a bracket word $b \in \BW(W)$ of weight $e$ such that $\phi_W^c(b) \in L^\C (\Gamma, c)$ is non-zero. Since each element of $W$ is an eigenvector of the automorphism $\pi_c^{-1}(T(\sigma_1, \alpha_1) \cdot \ldots \cdot T(\sigma_d, \alpha_d))$, it follows that $\phi_W^c(b)$ is still an eigenvector. By construction, it is clear that the eigenvalue of $\phi_W^c(b)$ is equal to $1$.
			\item \label{item:proofUpperBoundCase2} There exist $s_1, \ldots, s_n, r_1, \ldots, r_n \in \E{k}, t_1, \ldots, t_m \in \E{l}$ such that
			\[ \left( \prod_{i = 1}^n R_{k s_i}(\alpha_i) \right) \left( \prod_{i = 1}^n R_{k r_i}(\alpha_i) \right) \left( \prod_{i = 1}^m R_{l t_i}(\beta_i) \right) = 1. \]
			In this case, define $W := \{ w(\sigma, \alpha)_{i s_i}, \, w(\sigma, \alpha)_{i r_i} \mid 1 \leq i \leq n \} \cup \left\{ w(\tau, \beta)_{i t_i} \mid 1 \leq i \leq m \right\}$ and $\kappa: W \to A: w(\sigma, \alpha)_{ij} \mapsto v_{\lambda i}, \, w(\tau, \beta)_{ij} \mapsto v_{\mu i}$. It is clear that $A$, $W$ and $\kappa$ satisfy the assumptions of Lemma \ref{lem:existenceNonZeroBWWithGivenWeightGeneralization}. Define the weight
			\[ e:W \to \N: w(\tau, \beta)_{ij} \mapsto 1, \, w(\sigma, \alpha)_{ij} \mapsto \begin{cases}
				1 &\text{ if } s_i \neq r_i\\
				2 &\text{ else,}
			\end{cases}  \]
			for which it holds that
			\[ \sum_{w \in W} e(w) = 2n + m \leq c(\lambda, \mu) = c. \]
			By consequence we can apply Lemma \ref{lem:existenceNonZeroBWWithGivenWeightGeneralization} to $A$, $W$ and $\kappa$ for the weight $e$ and get a bracket word $b \in \BW(W)$ of weight $e$ such that $\phi_W^c(b) \in L^\C (\Gamma, c)$ is non-zero. Since each element of $W$ is an eigenvector of the automorphism $\pi_c^{-1}(T(\sigma_1, \alpha_1) \cdot \ldots \cdot T(\sigma_d, \alpha_d))$, it follows that $\phi_W^c(b)$ is still an eigenvector. By construction, it is clear that the eigenvalue of $\phi_W^c(b)$ is equal to $1$.
			
			\item \label{item:proofUpperBoundCase3} There exist $s_1, \ldots, s_n \in \E{k}, t_1, \ldots, t_m, r_1, \ldots, r_n \in \E{l}$ such that
			\[ \left( \prod_{i = 1}^n R_{k s_i}(\alpha_i) \right) \left( \prod_{i = 1}^m R_{l t_i}(\beta_i) \right) \left( \prod_{i = 1}^m R_{l r_i}(\beta_i) \right) = 1. \]
			In this case, define $W := \{ w(\sigma, \alpha)_{i s_i} \mid 1 \leq i \leq n \} \cup \left\{ w(\tau, \beta)_{i t_i}, \, w(\tau, \beta)_{i r_i} \mid 1 \leq i \leq m \right\}$ and $\kappa: W \to A: w(\sigma, \alpha)_{ij} \mapsto v_{\lambda i}, \, w(\tau, \beta)_{ij} \mapsto v_{\mu i}$. It is clear that $A$, $W$ and $\kappa$ satisfy the assumptions of Lemma \ref{lem:existenceNonZeroBWWithGivenWeightGeneralization}. Define the weight
			\[ e:W \to \N: w(\sigma, \alpha)_{ij} \mapsto 1, \, w(\tau, \beta)_{ij} \mapsto \begin{cases}
				1 &\text{ if } t_i \neq r_i\\
				2 &\text{ else}
			\end{cases} \]
			for which it holds that
			\[ \sum_{w \in W} e(w) = n + 2m \leq c(\lambda, \mu) = c. \]
			By consequence we can apply Lemma \ref{lem:existenceNonZeroBWWithGivenWeightGeneralization} to $A$, $W$ and $\kappa$ for the weight $e$ and get a bracket word $b \in \BW(W)$ of weight $e$ such that $\phi_W^c(b) \in L^\C (\Gamma, c)$ is non-zero. Since each element of $W$ is an eigenvector of the automorphism $\pi_c^{-1}(T(\sigma_1, \alpha_1) \cdot \ldots \cdot T(\sigma_d, \alpha_d))$, it follows that $\phi_W^c(b)$ is still an eigenvector. By construction, it is clear that the eigenvalue of $\phi_W^c(b)$ is equal to $1$.
		\end{enumerate}
	\end{itemize}
	This concludes the proof.
\end{proof}

\begin{table}[H]
	\centering
	\begin{tabular}{l|l|l|l|l}
		& $\begin{array}{l}
			M/k \text{ even}\\
			M/l \text{ even}
		\end{array}$ & $\begin{array}{l}
		M/k \text{ odd}\\
		M/l \text{ odd}
	\end{array}$ & $\begin{array}{l}
	M/k \text{ even}\\
	M/l \text{ odd}
\end{array}$ & $\begin{array}{l}
M/k \text{ odd}\\
M/l \text{ even}
\end{array}$ \\ \hline
		$\arraycolsep=1.4pt\def\arraystretch{1.5}
		\begin{array}{l}
			\prod_i^n \alpha_i = 1\\
			\prod_i^n \beta_i = 1
		\end{array}$ & $\arraycolsep=1.4pt\def\arraystretch{1.5}\begin{array}{l}
		\text{Lemma } \ref{lem:prodRootsEqual1SameSign}\,\ref{item:prodRootsEq1Case0}\\
		\rightarrow \ref{item:proofUpperBoundCase1}
	\end{array}$                                                                                                             & $\arraycolsep=1.4pt\def\arraystretch{1.5}\begin{array}{l}
	\text{Lemma } \ref{lem:prodRootsEqual1SameSign}\,\ref{item:prodRootsEq1Case0}\\
	\rightarrow \ref{item:proofUpperBoundCase1}
\end{array}$ & $\arraycolsep=1.4pt\def\arraystretch{1.5}\begin{array}{l}
\text{Lemma } \ref{lem:prodRootsEqual1SameSign}\,\ref{item:prodRootsEq1Case0}\\
\rightarrow \ref{item:proofUpperBoundCase1}
\end{array}$ & $\arraycolsep=1.4pt\def\arraystretch{1.5}\begin{array}{l}
\text{Lemma } \ref{lem:prodRootsEqual1SameSign}\,\ref{item:prodRootsEq1Case0}\\
\rightarrow \ref{item:proofUpperBoundCase1}
\end{array}$ \\ \hline
		$\arraycolsep=1.4pt\def\arraystretch{1.5}\begin{array}{l}
			\prod_i^n \alpha_i = -1\\
			\prod_i^n \beta_i = -1
		\end{array}$ & $\arraycolsep=1.4pt\def\arraystretch{1.5}\begin{array}{l}
		\text{Lemma } \ref{lem:prodRootsEqual1SameSign}\,\ref{item:prodRootsEq1Case1}\\
		\rightarrow \ref{item:proofUpperBoundCase1}
	\end{array}$ & $\arraycolsep=1.4pt\def\arraystretch{1.5}\begin{array}{l}
	\text{Lemma } \ref{lem:prodRootsEqual1SameSign}\,\ref{item:prodRootsEq1Case1}\\
	\rightarrow \ref{item:proofUpperBoundCase1}
\end{array}$ & $\arraycolsep=1.4pt\def\arraystretch{1.5}\begin{array}{l}
\text{Lemma } \ref{lem:prodRootsEqual1SameSign}\,\ref{item:prodRootsEq1Case2}\\
\rightarrow \ref{item:proofUpperBoundCase3}
\end{array}$ & $\arraycolsep=1.4pt\def\arraystretch{1.5}\begin{array}{l}
\text{Lemma } \ref{lem:prodRootsEqual1SameSign}\,\ref{item:prodRootsEq1Case2}\\
\rightarrow \ref{item:proofUpperBoundCase2}
\end{array}$ \\ \hline
		$\arraycolsep=1.4pt\def\arraystretch{1.5}\begin{array}{l}
			\prod_i^n \alpha_i = -1\\
			\prod_i^n \beta_i = 1
		\end{array}$ & $\arraycolsep=1.4pt\def\arraystretch{1.5}\begin{array}{l}
		\text{Lemma } \ref{lem:prodRootsEqual1DiffSign}\,\ref{item:prodRootsEq1Case3}\\
		\rightarrow \ref{item:proofUpperBoundCase1}
	\end{array}$ & $\arraycolsep=1.4pt\def\arraystretch{1.5}\begin{array}{l}
	\text{Lemma } \ref{lem:prodRootsEqual1DiffSign}\,\ref{item:prodRootsEq1Case4}\\
	\rightarrow \ref{item:proofUpperBoundCase2}
\end{array}$ & $\arraycolsep=1.4pt\def\arraystretch{1.5}\begin{array}{l}
\text{Lemma } \ref{lem:prodRootsEqual1DiffSign}\,\ref{item:prodRootsEq1Case3}\\
\rightarrow \ref{item:proofUpperBoundCase1}
\end{array}$ & $\arraycolsep=1.4pt\def\arraystretch{1.5}\begin{array}{l}
\text{Lemma } \ref{lem:prodRootsEqual1DiffSign}\,\ref{item:prodRootsEq1Case4}\\
\rightarrow \ref{item:proofUpperBoundCase2}
\end{array}$ \\ \hline
		$\arraycolsep=1.4pt\def\arraystretch{1.5}\begin{array}{l}
			\prod_i^n \alpha_i = 1\\
			\prod_i^n \beta_i = -1
		\end{array}$ & $\arraycolsep=1.4pt\def\arraystretch{1.5}\begin{array}{l}
		\text{Lemma } \ref{lem:prodRootsEqual1DiffSign}\,\ref{item:prodRootsEq1Case3}\\
		\rightarrow \ref{item:proofUpperBoundCase1}
	\end{array}$ & $\arraycolsep=1.4pt\def\arraystretch{1.5}\begin{array}{l}
	\text{Lemma } \ref{lem:prodRootsEqual1DiffSign}\,\ref{item:prodRootsEq1Case4}\\
	\rightarrow \ref{item:proofUpperBoundCase3}
\end{array}$ & $\arraycolsep=1.4pt\def\arraystretch{1.5}\begin{array}{l}
\text{Lemma } \ref{lem:prodRootsEqual1DiffSign}\,\ref{item:prodRootsEq1Case4}\\
\rightarrow \ref{item:proofUpperBoundCase3}
\end{array}$ & $\arraycolsep=1.4pt\def\arraystretch{1.5}\begin{array}{l}
\text{Lemma } \ref{lem:prodRootsEqual1DiffSign}\,\ref{item:prodRootsEq1Case3}\\
\rightarrow \ref{item:proofUpperBoundCase1}
\end{array}$
	\end{tabular}
	\caption{Summary of which lemma is used in combination with Lemma \ref{lem:rootsOfProdIsProdOfRoots} to obtain one of the cases \ref{item:proofUpperBoundCase1}, \ref{item:proofUpperBoundCase2} or \ref{item:proofUpperBoundCase3}}
	\label{tab:proofUpperBoundCases}
\end{table}

\bibliography{ref}

\begin{thebibliography}{10}

\bibitem{baba95-1}
L\'{a}szl\'{o} Babai.
\newblock Automorphism groups, isomorphism, reconstruction.
\newblock In {\em Handbook of combinatorics, {V}ol. 1, 2}, pages 1447--1540.
  Elsevier Sci. B. V., Amsterdam, 1995.

\bibitem{dm05-1}
S.~G. Dani and Meera~G. Mainkar.
\newblock Anosov automorphisms on compact nilmanifolds associated with graphs.
\newblock {\em Trans. Amer. Math. Soc.}, 357(6):2235--2251, 2005.

\bibitem{dg14-1}
Karel Dekimpe and Daciberg Gon\c{c}alves.
\newblock The {$R_\infty$} property for free groups, free nilpotent groups and
  free solvable groups.
\newblock {\em Bull. Lond. Math. Soc.}, 46(4):737--746, 2014.

\bibitem{dg16-1}
Karel Dekimpe and Daciberg~L. Gon\c{c}alves.
\newblock The {$R_\infty$} property for nilpotent quotients of surface groups.
\newblock {\em Trans. London Math. Soc.}, 3(1):28--46, 2016.

\bibitem{dl23-1}
Karel Dekimpe and Maarten Lathouwers.
\newblock The {R}eidemeister spectrum of 2-step nilpotent groups determined by
  graphs.
\newblock {\em Comm. Algebra}, 51(6):2384--2407, 2023.

\bibitem{dg20-1}
Karel Dekimpe and Daciberg Lima~Gon\c{c}alves.
\newblock The {$R_\infty$}-property for nilpotent quotients of
  {B}aumslag-{S}olitar groups.
\newblock {\em J. Group Theory}, 23(3):545--562, 2020.

\bibitem{send21-1}
Karel Dekimpe and Pieter Senden.
\newblock The {$R_\infty$}-property for right-angled {A}rtin groups.
\newblock {\em Topology Appl.}, 293:Paper No. 107557, 36, 2021.

\bibitem{dm21-1}
Jonas Der\'{e} and Meera Mainkar.
\newblock Anosov diffeomorphisms on infra-nilmanifolds associated to graphs.
\newblock {\em Math. Nachr.}, 296(2):610--629, 2023.

\bibitem{dw22-1}
Jonas Der\'{e} and Thomas Witdouck.
\newblock Classification of {$K$}-forms in nilpotent {L}ie algebras associated
  to graphs.
\newblock {\em Comm. Algebra}, 51(12):5209--5234, 2023.

\bibitem{dw23-1}
Jonas Der\'e and Thomas Witdouck.
\newblock A characterization of {A}nosov rational forms in nilpotent {L}ie
  algebras associated to graphs.
\newblock {\em Monatshefte f\"ur Mathematik}, 2024.

\bibitem{kd92-1}
G.~Duchamp and D.~Krob.
\newblock The lower central series of the free partially commutative group.
\newblock {\em Semigroup Forum}, 45(3):385--394, 1992.

\bibitem{er63-1}
P.~Erd\H{o}s and A.~R\'{e}nyi.
\newblock Asymmetric graphs.
\newblock {\em Acta Math. Acad. Sci. Hungar.}, 14:295--315, 1963.

\bibitem{gw09-1}
Daciberg Gon\c{c}alves and Peter Wong.
\newblock Twisted conjugacy classes in nilpotent groups.
\newblock {\em J. Reine Angew. Math.}, 633:11--27, 2009.

\bibitem{hoch81-1}
Gerhard~P. Hochschild.
\newblock {\em Basic theory of algebraic groups and {L}ie algebras}, volume~75
  of {\em Graduate Texts in Mathematics}.
\newblock Springer-Verlag, New York-Berlin, 1981.

\bibitem{payn09-1}
Tracy~L. Payne.
\newblock Anosov automorphisms of nilpotent {L}ie algebras.
\newblock {\em J. Mod. Dyn.}, 3(1):121--158, 2009.

\bibitem{roma11-1}
V.~Roman'kov.
\newblock Twisted conjugacy classes in nilpotent groups.
\newblock {\em J. Pure Appl. Algebra}, 215(4):664--671, 2011.

\bibitem{serv89-1}
Herman Servatius.
\newblock Automorphisms of graph groups.
\newblock {\em J. Algebra}, 126(1):34--60, 1989.

\bibitem{wade15-1}
Richard~D. Wade.
\newblock The lower central series of a right-angled {A}rtin group.
\newblock {\em Enseign. Math.}, 61(3-4):343--371, 2015.

\end{thebibliography}
\bibliographystyle{plain}

\end{document}